%
%
%
%
%
\RequirePackage{fix-cm}
\documentclass{svjour3}                     
\smartqed  
\usepackage{graphicx}
\usepackage{enumerate}
\usepackage{amsfonts,amssymb} 
\usepackage{amsmath} 
%
%
%
\begin{document}

\title{Numerical algorithm for the space-time fractional Fokker-Planck system with two internal states\thanks{This work was supported by the National Natural Science Foundation of China under Grant No. 11671182 and the Fundamental Research Funds for the Central Universities under Grant No. lzujbky-2018-ot03.}
}
\titlerunning{Numerical algorithm for the space-time fractional Fokker-Planck equations}

\author{Daxin Nie         \and
        Jing Sun \and
        Weihua Deng 
}


\institute{Daxin Nie \at
              School of Mathematics and Statistics, Gansu Key Laboratory of Applied Mathematics and Complex Systems, Lanzhou University, Lanzhou 730000, P.R. China \\
              \email{ndx1993@163.com}           
           \and
           Jing Sun \at
              School of Mathematics and Statistics, Gansu Key Laboratory of Applied Mathematics and Complex Systems, Lanzhou University, Lanzhou 730000, P.R. China\\
           \email{sunj18@lzu.edu.cn}
           \and
           Weihua Deng\at
           School of Mathematics and Statistics, Gansu Key Laboratory of Applied Mathematics and Complex Systems, Lanzhou University, Lanzhou 730000, P.R. China\\
           \email{dengwh@lzu.edu.cn}
}

\date{Received: date / Accepted: date}

\maketitle

\begin{abstract}
The fractional Fokker-Planck system with multiple internal states is derived in [Xu and Deng, Math. Model. Nat. Phenom., $\mathbf{13}$, 10 (2018)], where the space derivative is Laplace operator. If the jump length distribution of the particles is power law instead of Gaussian, the space derivative should be replaced with fractional Laplacian.
This paper focuses on solving the two state Fokker-Planck system with fractional Laplacian.
We first provide a priori estimate for this system under different regularity assumptions on the initial data. Then we use $L_1$ scheme to discretize the time fractional derivatives and finite element method to approximate the fractional Laplacian operators. Furthermore, we give the error estimates for the space semidiscrete and fully discrete schemes without any assumption on regularity of solutions. Finally, the effectiveness of the designed scheme is verified by numerical experiments.

\keywords{Fractional Fokker-Planck system \and multiple internal states \and Riemann-Liouville fractional derivative \and fractional Laplacian \and $L_1$ scheme \and finite element method}
\end{abstract}
\section{Introduction}

Anomalous diffusion phenomena are widespread in the nature world \cite{Klafter2005}. Important progresses for modelling these phenomena have been made both microscopically by stochastic processes and macroscopically by partial differential equations (PDEs) \cite{Deng2019}. Generally, the PDEs govern the probability density function (PDF) of some particular statistical observables, say, position, functional, first exit time, etc.  The fractional Fokker-Planck equation models the PDF of the position of the particles \cite{Barkai2001,Barkai2000}.  So far, there are many numerical methods for solving FFPE, such as finite difference method, finite element method, and even the stochastic methods \cite{Deng2009,Deng2007,Heinsalu2006,Meerschaert2006,Sun2019}.

Anomalous diffusions with multiple internal states not only are often observed natural phenomena but also some challenge problems, e.g., smart animal searching for food, can be easily treated by taking them as a problem with multiple internal states. Recently, multiple-internal-state L\'evy walk and CTRW with independent waiting times and jump lengths are carefully discussed and the PDEs governing the PDF of some statistical observables are derived \cite{Xu2018,Xu2018-2}. In the CTRW model if the distributions of the jump lengths are power law instead of Gaussian, then the corresponding PDEs involve fractional Laplacian.
In this paper, we provide and analyze the numerical scheme for the following fractional Fokker-Planck system (FFPS) with two internal states \cite{Xu2018} and the appropriate boundary condition is specified \cite{Deng2019}, i.e.,
\begin{equation}\label{equmatrixequ}
\left \{
\begin{aligned}
&\mathbf{M}^T\frac{\partial }{\partial t} \mathbf{G}=(\mathbf{M}^T-\mathbf{I}){\rm diag}(~_0D^{1-\alpha_1}_t,~_0D^{1-\alpha_2}_t)\mathbf{G}\\
&\quad\quad\quad\quad\quad +\mathbf{M}^T{\rm diag}(-\,_0D^{1-\alpha_1}_t(-\Delta)^{s_1},-\,_0D^{1-\alpha_2}_t(-\Delta)^{s_2})\mathbf{G} \ \ \,{\rm in}\ \Omega,\ t\in(0,T],\\
&\mathbf{G}(\cdot,0)=\mathbf{G}_0 \quad\quad\quad\quad\quad\quad\quad\quad\quad\quad\quad\quad\quad\quad\quad\quad\quad\quad\qquad\qquad {\rm in}\ \Omega,\\
&\mathbf{G}=0 \quad\quad\quad\quad\quad\quad\quad\quad\quad\quad\quad\quad\quad\quad\quad\quad\quad\quad\quad\quad\qquad\qquad\ \ \, {\rm in}\ \Omega^c,\ t\in[0,T],
\end{aligned}
\right .
\end{equation}
where $\Omega$ denotes a bounded convex polygonal domain in $\mathbb{R}^n$ $(n=1,2,3)$; $\Omega^c$ means the complementary set of $\Omega$ in $\mathbb{R}^n$; $\mathbf{M}$ is the transition matrix of a Markov chain, being a $2\times 2$ invertible matrix here; $\mathbf{M}^T$ means the transpose of $\mathbf{M}$; $\mathbf{G}=[G_1,G_2]^T$ denotes the solution of the system \eqref{equmatrixequ}; $\mathbf{G}_0=[G_{1,0},G_{2,0}]^T$ is the initial value; $\mathbf{I}$ is an identity matrix; `diag' denotes a diagonal matrix formed from its vector argument;  $\,_0D^{1-\alpha_i}_t$ $(i=1,2)$ are the Riemann-Liouville fractional derivatives defined by \cite{Podlubny1999}
\begin{equation}
_{0}D^{1-\alpha_i}_tG=\frac{1}{\Gamma(\alpha_{i})}\frac{\partial}{\partial t}\int^t_{0}(t-\xi)^{\alpha_i-1}G(\xi)d\xi, ~\alpha_{i}\in(0,1);
\end{equation}
and $(-\Delta)^{s_i}$ $(i=1,2)$ are the fractional Laplacians given as
\begin{equation*}
    (-\Delta)^{s_i}u(x)=c_{n,s_i}{\rm P.V.}\int_{\mathbb{R}^n}\frac{u(x)-u(y)}{|x-y|^{n+2s_i}}dy,\quad s_i\in (0,1),
\end{equation*}
where  $c_{n,s_i}=\frac{2^{2s_i}s_i\Gamma(n/2+s_i)}{\pi^{n/2}\Gamma(1-s_i)}$ and ${\rm P.V.}$ denotes the principal value integral. Without loss of generality, we set $s_1\leq s_2$ in this  paper.

In some sense, the system \eqref{equmatrixequ} can be seen as the extension of the model
\begin{equation}\label{tramod}
\left\{
\begin{aligned}
&\frac{\partial }{\partial t} G=-\,_0D^{1-\alpha}_t(-\Delta)^{s}G \ \ \ \quad\quad\quad\,\,\,{\rm in}\ \Omega,\ t\in(0,T],\\
&G(\cdot,0)=G_0 \quad\quad\quad\quad\quad\quad\quad\quad\quad\quad {\rm in}\ \Omega,\\
&G=0 \quad\quad\quad\quad\quad\quad\quad\quad\quad\quad\quad\quad\ \ \, {\rm in}\ \Omega^c,\ t\in[0,T].
\end{aligned}
\right.
\end{equation}
It is well known that Eq. \eqref{tramod} has a wide range of practical applications, and there are also some discussions on its regularity and numerical issues \cite{Acosta20171,Acosta20172,Acosta20173,Acosta20174}; in particular, \cite{Acosta20174} provides an optimal spatial convergence rates when $s\in (1/2,1)$.
Compared with \eqref{tramod}, the solutions of the system \eqref{equmatrixequ} are coupled with each other and two different space fractional derivatives bring about a huge challenge on the priori estimates of the solutions. Here, we provide a priori estimate for the system \eqref{equmatrixequ} with $G_1(0),\,G_2(0)\in L^2(\Omega)$ (see Theorem \ref{thmhomoregularitynonsmooth}) and discuss the regularity of the system \eqref{equmatrixequ} detailedly with $s_1,s_2<1/2$ under different regularity assumptions on initial data (see Theorems \ref{thmhomoregularitysmooth1} and \ref{thmhomoregularitysmooth2}). Then we use the finite element method to discretize the fractional Laplacians and provide error analysis for spatial semidiscrete scheme. Lastly, we use $L_1$ scheme to discretize the time fractional derivatives and get the first order accuracy without any assumption on the regularity of the solutions. Besides, the proof ideas used in this paper can also be applied to \eqref{tramod} and an optimal spatial convergence rates can be got for $s\in (0,1)$ rather than $s\in (1/2,1)$.

The paper is organized as follows. In Section \ref{Sec1}, we first introduce the notations and then focus on  the Sobolev regularity of the solutions for the system \eqref{equmatrixequ} under different regularity assumptions on initial data. In Section  \ref{Sec2}, we do the space discretizations by the finite element method and provide the error estimates for the semidiscrete scheme. In Section \ref{Sec3}, we use the $L_1$ scheme to discretize the time fractional derivatives and provide error estimates for the fully discrete scheme. In Section \ref{Sec4}, we confirm the theoretically predicted convergence orders by numerical examples. Finally, we conclude the paper with some discussions. Throughout this paper, $C$ denotes a generic positive constant, whose value may differ at each occurrence, and $\varepsilon>0$ is an arbitrary small constant.

\section{Regularity of the solution} \label{Sec1}
In this section, we focus on the regularity of the system (\ref{equmatrixequ}).

\subsection{Preliminaries} Here we make some preparations. Denote $G_1(t)$, $G_2(t)$ as the functions $G_1(\cdot,t)$, $G_2(\cdot,t)$ respectively,  use the notation `$\tilde{~}$' for taking Laplace transform, and introduce $\|\cdot \|_{X\rightarrow Y}$ as the operator norm from $X$ to $Y$, where $X$, $Y$ are Banach spaces. Furthermore, for $\kappa>0$ and $\pi/2<\theta<\pi$, we denote sectors $\Sigma_{\theta}$ and $\Sigma_{\theta,\kappa}$ as
    \begin{equation*}
        \begin{aligned}
        \Sigma_{\theta}=\{z\in\mathbb{C}:z\neq 0,|\arg z|\leq \theta\},\ \Sigma_{\theta,\kappa}=\{z\in\mathbb{C}:|z|\geq\kappa,|\arg z|\leq \theta\},
        \end{aligned}
    \end{equation*}
 and define the contour $\Gamma_{\theta,\kappa}$ by
    \begin{equation*}
    \Gamma_{\theta,\kappa}=\{r e^{-\mathbf{i}\theta}: r\geq \kappa\}\cup\{\kappa e^{\mathbf{i}\psi}: |\psi|\leq \theta\}\cup\{r e^{\mathbf{i}\theta}: r\geq \kappa\},
    \end{equation*}
    where the circular arc is oriented counterclockwise, the two rays are oriented with an increasing imaginary part, and $\mathbf{i}$ denotes the imaginary unit. For convenience, in the following we denote $\Gamma_\theta=\Gamma_{\theta,0}$ and $A_i$ as the fractional Laplacian $(-\Delta)^{s_i}$ $(i=1,2)$ with homogeneous Dirichlet boundary condition. 

Then we recall some fractional Sobolev spaces \cite{Acosta20171,Acosta20172,Acosta20174,Di2012}. Let $\Omega\subset \mathbb{R}^n$ $(n=1,2,3)$ be an open set and $s\in(0,1)$. Then the fractional Sobolev space $ H^s(\Omega) $ can be defined by
\begin{equation*}
    H^s(\Omega)=\left\{w\in L^2(\Omega):|w|_{H^s(\Omega)}=\left(\int\!\int_{\Omega^2}\frac{|w(x)-w(y)|^2}{|x-y|^{n+2s}}dxdy\right)^{1/2}<\infty\right\}
\end{equation*}
with the norm $\|\cdot\|_{H^{s}(\Omega)}=\|\cdot\|_{L^2(\Omega)}+|\cdot|_{H^{s}(\Omega)}$, which constitutes a Hilbert space. As for $s>1$ and $s\notin \mathbb{N}$, the fractional Sobolev space $ H^s(\Omega) $ can be defined as
\begin{equation*}
    H^{s}(\Omega)=\{w\in H^{\lfloor s\rfloor }(\Omega):|D^{\alpha}w|_{H^{\sigma}(\Omega)}<\infty\ {\rm for\ all}\ \alpha\ s.t.\ |\alpha|=\lfloor s\rfloor\},
\end{equation*}
where $\sigma=s-\lfloor s\rfloor$ and $\lfloor s\rfloor$ means the biggest integer not larger than $s$. Another space we use is composed of functions in $H^s(\mathbb{R}^n)$ with support in $\bar{\Omega}$, i.e.,
\begin{equation*}
    \hat{H}^s(\Omega)=\{w\in H^s(\mathbb{R}^n):{\bf supp} \ w\subset \bar{\Omega}\},
\end{equation*}
whose inner product can be defined as the bilinear form
\begin{equation}\label{equdefHsOmega}
    \langle u,w\rangle_s:=c_{n,s}\int\int_{(\mathbb{R}^n\times\mathbb{R}^n)\backslash(\Omega^c\times\Omega^c)}\frac{(u(x)-u(y))(w(x)-w(y))}{|x-y|^{n+2s}}dydx.
\end{equation}

\begin{remark}
 According to \cite{Acosta20174}, the norm induced by \eqref{equdefHsOmega} is a multiple of the $H^{s}(\mathbb{R}^n)$-seminorm, which is equivalent to the full $H^{s}(\mathbb{R}^n)$-norm on this space because of the fractional Poincar\'{e}-type inequality \cite{Di2012}. Moreover, from \cite{Acosta20171}, $\hat{H}^s(\Omega)$ coincides with $H^s(\Omega)$ when $s<1/2$.
\end{remark}

 Next we recall the properties  and elliptic regularity of the fractional Laplacian. Reference \cite{Acosta20174} claims that $(-\Delta)^s:H^l(\mathbb{R}^n)\rightarrow H^{l-2s}(\mathbb{R}^n)$ is a bounded and invertible operator. Besides,  Ref. \cite{Grubb2015} proposes the regularity of the following problem
\begin{equation}\label{equdiripro}
    \left \{\begin{aligned}
    (-\Delta)^su&=g\quad {\rm in}\ \Omega,\\
    u&=0\quad {\rm in}\ \Omega^c,
    \end{aligned}\right.
\end{equation}
and the main results are described as
\begin{theorem}[\cite{Grubb2015}]\label{thmregdiri}
    Let $\Omega \subset \mathbb{R}^n$ be a bounded domain with smooth boundary, $g\in H^r(\Omega)$ for some $r\geq -s$ and consider $u\in \hat{H}^s(\Omega)$ as the solution of the Dirichlet problem \eqref{equdiripro}.
Then, there exists a constant $C$ such that
    \begin{equation*}
        |u|_{H^{s+\gamma}(\mathbb{R}^n)}\leq C\|g\|_{H^{r}(\Omega)},
    \end{equation*}
where $\gamma=\min(s+r,1/2-\epsilon)$ with $\epsilon>0$ arbitrarily small.
\end{theorem}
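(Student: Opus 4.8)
The plan is to treat $(-\Delta)^s$ as a classical, elliptic pseudodifferential operator ($\psi$DO) of order $2s$ on $\mathbb{R}^n$ with principal symbol $|\xi|^{2s}$, and to exploit that, relative to the smooth hypersurface $\partial\Omega$, it satisfies H\"{o}rmander's $s$-transmission condition. With this in hand the statement becomes an instance of elliptic regularity in the adapted $s$-transmission (H\"{o}rmander) scale, whose elements behave near the boundary like $d(x)^s$ times an $H^{t-s}$-function, $d(x)=\mathrm{dist}(x,\partial\Omega)$. Concretely I would separate the two regimes visible in $\gamma=\min(s+r,\tfrac12-\varepsilon)$: for $-s\le r\le\tfrac12-s$ the target $H^{s+\gamma}=H^{2s+r}$ is exactly the order count for $(-\Delta)^s\colon H^{2s+r}\to H^{r}$, while for larger $r$ the boundary profile caps the regularity gain at $s+\tfrac12-\varepsilon$ no matter how smooth $g$ is.

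First, localize with a smooth partition of unity $1=\sum_j\chi_j$ subordinate to a finite cover of $\bar\Omega$. On patches with $\mathrm{supp}\,\chi_j\Subset\Omega$ the function $\chi_j u$ solves $(-\Delta)^s(\chi_j u)=\chi_j g+[(-\Delta)^s,\chi_j]u$, where the commutator is a $\psi$DO of order $2s-1$; combining the boundedness and invertibility of $(-\Delta)^s\colon H^l(\mathbb{R}^n)\to H^{l-2s}(\mathbb{R}^n)$ recalled above with a finite bootstrap gives $\chi_j u\in H^{r+2s}$, which sits inside $H^{s+\gamma}$ since $r+2s\ge s+\gamma$. The real work is on the patches meeting $\partial\Omega$: straighten the boundary by a smooth diffeomorphism onto the half-space $\mathbb{R}^n_+=\{x_n>0\}$, transporting $(-\Delta)^s$ to a $\psi$DO $P$ of order $2s$ with principal part $(|\xi'|^2+\xi_n^2)^s$ that still satisfies the $s$-transmission condition at $\{x_n=0\}$, the extra lower-order terms being absorbed by iteration.

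In the half-space model one factorizes the symbol as a product of a ``plus'' and a ``minus'' factor, of the form $(\xi_n-\mathbf{i}|\xi'|)^s(\xi_n+\mathbf{i}|\xi'|)^s$ in the constant-coefficient case, with the branch chosen so that the plus factor extends holomorphically into $\mathrm{Im}\,\xi_n>0$; composing with the corresponding order-reducing operators $\Lambda_\pm^{(s)}$ turns the Dirichlet problem into one whose solution has the form $e^+(x_n^s v)$ (with $e^+$ extension by zero) and $v$ of Sobolev regularity governed by that of $g$. One then reads off the Sobolev index: since $x_n^s\chi(x_n)\in H^{s+1/2-\varepsilon}(\mathbb{R})\setminus H^{s+1/2}(\mathbb{R})$ for any nontrivial cutoff $\chi$, the model solution lies in $H^{s+\min(r+s,\,1/2-\varepsilon)}$ and no better, with the corresponding bound by $\|g\|_{H^r}$. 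Summing the patch estimates through the partition of unity and undoing the diffeomorphisms yields $|u|_{H^{s+\gamma}(\mathbb{R}^n)}\le C\|g\|_{H^r(\Omega)}$.

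The main obstacle is this boundary analysis: one must check that the $s$-transmission property survives localization and flattening, keep all parametrix remainders under control, and — most delicately — extract the \emph{sharp} exponent by pinning down the exact boundary profile $d(x)^s$ of $u$, which is precisely what forces the unavoidable $\tfrac12-\varepsilon$ loss rather than a full half-derivative; this requires the Wiener--Hopf/order-reduction bookkeeping in the adapted function spaces rather than a soft energy estimate. A softer alternative — Caffarelli--Silvestre extension plus a boundary Harnack inequality — delivers up-to-the-boundary $C^s$ regularity but not the optimal Sobolev index, so I would still rely on the $\psi$DO framework for the stated estimate.
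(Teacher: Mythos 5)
The paper does not prove this statement itself --- it is quoted verbatim from the cited reference Grubb (2015) --- and your outline is a faithful reconstruction of precisely that reference's argument: the $\mu$-transmission calculus with $\mu=s$, interior bootstrap via the order-$(2s-1)$ commutator, boundary flattening plus Wiener--Hopf order-reduction with the factorization $(\xi_n-\mathbf{i}|\xi'|)^s(\xi_n+\mathbf{i}|\xi'|)^s$, and the correct identification that the $d(x)^s$ boundary profile (equivalently, $x_+^s\chi\in H^{s+1/2-\epsilon}(\mathbb{R})\setminus H^{s+1/2}(\mathbb{R})$) is what caps the gain at $\gamma=\min(s+r,1/2-\epsilon)$. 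Since the heavy parametrix bookkeeping is delegated to the pseudodifferential machinery you invoke, your proposal matches the paper's treatment, which delegates the entire proof to the same citation.
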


\subsection{A priori estimate of the solution}
According to the property of the transition matrix of a Markov chain \cite{Xu2018}, the matrix $\mathbf{M}$ can be denoted as
\begin{equation*}
    \mathbf{M}=\left [\begin{matrix}
    m&1-m\\
    1-m&m
    \end{matrix}\right ],\quad m\in[0,1/2)\cup(1/2,1],
\end{equation*}
and the fact that matrix $\mathbf{M}$ is invertible leads to
\begin{equation*}
(\mathbf{M}^T)^{-1}=\left [\begin{matrix}
\frac{m}{2m-1}&\frac{m-1}{2m-1}\\
\frac{m-1}{2m-1}&\frac{m}{2m-1}
\end{matrix}\right ].
\end{equation*}
So the system \eqref{equmatrixequ} can be rewritten as
\begin{equation}\label{equrqtosol}
    \left \{
    \begin{aligned}
    &\frac{\partial G_1}{\partial t}+a~_0D^{1-\alpha_1}_tG_1+~_0D^{1-\alpha_1}_tA_1 G_1=a~_0D^{1-\alpha_2}_tG_2\quad\quad\quad\,\, {\rm in}\ \Omega,\ t\in(0,T],\\
    &\frac{\partial G_2}{\partial t}+a~_0D^{1-\alpha_2}_tG_2+~_0D^{1-\alpha_2}_tA_2 G_2=a~_0D^{1-\alpha_1}_tG_1 \quad\quad\,\,\quad {\rm in}\ \Omega,\ t\in(0,T],\\
    &\mathbf{G}(\cdot,0)=\mathbf{G}_0 \,\,\quad\quad\quad\quad\quad\quad\quad\quad\quad\quad\,\,\quad\quad\quad\quad\quad\quad\quad\quad\quad\quad {\rm in}\ \Omega,\\
    &\mathbf{G}=0 \,\,\,\quad\quad\quad\quad\quad\quad\quad\quad\quad\quad\quad\quad\quad\quad\quad\quad\quad\quad\quad\quad\quad\quad\quad {\rm in}\ \Omega^c,\ t\in[0,T],
    \end{aligned}
    \right .
\end{equation}
where $ a=\frac{1-m}{2m-1} $, $m\in[0,1/2)\cup(1/2,1]$.

Taking the Laplace transforms for the first two equations of the system \eqref{equrqtosol} and using the identity $\widetilde{~_0D^{\alpha}_tu}(z)=z^\alpha\tilde{u}(z)$ \cite{Podlubny1999}, we have
\begin{equation}\label{equequinlapform}
\begin{aligned}
    &z\tilde{G}_1+az^{1-\alpha_1}\tilde{G}_1+z^{1-\alpha_1}A_1\tilde{G}_1=az^{1-\alpha_2}\tilde{G}_2+G_{1,0},\\
    &z\tilde{G}_2+az^{1-\alpha_2}\tilde{G}_2+z^{1-\alpha_2}A_2\tilde{G}_2=az^{1-\alpha_1}\tilde{G}_1+G_{2,0}.
\end{aligned}
\end{equation}
Denote
\begin{equation}\label{equdefHz}
 H(z,A,\alpha,\beta)=z^\beta\left(z^{\alpha}+a+A\right)^{-1},
\end{equation}
where $A$ is an operator. Then from \eqref{equequinlapform} and \eqref{equdefHz} we have
\begin{equation}\label{equsolrepinlap0}
    \begin{aligned}
        \tilde{G}_1=&H(z,A_1,\alpha_1,\alpha_1-1)G_{1,0}+aH(z,A_1,\alpha_1,\alpha_1-\alpha_2)\tilde{G}_{2},\\
        \tilde{G}_2=&H(z,A_2,\alpha_2,\alpha_2-1)G_{2,0}+aH(z,A_2,\alpha_2,\alpha_2-\alpha_1)\tilde{G}_{1}.\\
    \end{aligned}
\end{equation}
Thus
\begin{equation}\label{equsolrepinlap1}
\begin{aligned}
\tilde{G}_1=&H(z,A_1,\alpha_1,\alpha_1-1)G_{1,0}+aH(z,A_1,\alpha_1,\alpha_1-\alpha_2)(H(z,A_2,\alpha_2,\alpha_2-1)G_{2,0}\\
&+aH(z,A_2,\alpha_2,\alpha_2-\alpha_1)\tilde{G}_{1}),\\
\tilde{G}_2=&H(z,A_2,\alpha_2,\alpha_2-1)G_{2,0}
+aH(z,A_2,\alpha_2,\alpha_2-\alpha_1)(H(z,A_1,\alpha_1,\alpha_1-1)G_{1,0}\\
&+aH(z,A_1,\alpha_1,\alpha_1-\alpha_2)\tilde{G}_{2}).
\end{aligned}
\end{equation}
\begin{lemma}\label{lemtheestimateofHblabla}
    Let $A$ be the fractional Laplacian $(-\Delta)^s$ with homogeneous Dirichlet boundary condition. When $z\in\Sigma_{\theta,\kappa}$, $ \pi/2<\theta<\pi $ and $\kappa$ is large enough, we have the estimates
    \begin{equation*}
    \begin{aligned}
    &\| H(z,A,\alpha,\beta)\|_{L^2(\Omega)\rightarrow L^2(\Omega)}\leq C|z|^{\beta-\alpha},\  \| AH(z,A,\alpha,\beta)\|_{L^2(\Omega)\rightarrow L^2(\Omega)}\leq C|z|^{\beta},
    \end{aligned}
    \end{equation*}
    where $H(z,A,\alpha,\beta)$ is defined in \eqref{equdefHz}.
\end{lemma}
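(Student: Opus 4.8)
The plan is to use the spectral decomposition of the fractional Laplacian $A=(-\Delta)^s$ with homogeneous Dirichlet condition. Since $A$ is self-adjoint, positive, and has compact resolvent, there is an orthonormal basis $\{\phi_j\}_{j\ge 1}$ of $L^2(\Omega)$ with $A\phi_j=\lambda_j\phi_j$ and $0<\lambda_1\le\lambda_2\le\cdots\to\infty$. For $v\in L^2(\Omega)$ write $v=\sum_j v_j\phi_j$; then $H(z,A,\alpha,\beta)v=\sum_j z^\beta(z^\alpha+a+\lambda_j)^{-1}v_j\phi_j$, so by Parseval it suffices to bound the scalar multipliers $|z^\beta(z^\alpha+a+\lambda_j)^{-1}|$ and $|\lambda_j z^\beta(z^\alpha+a+\lambda_j)^{-1}|$ uniformly in $\lambda_j\ge\lambda_1>0$ and in $z\in\Sigma_{\theta,\kappa}$.

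The key step is the lower bound $|z^\alpha+a+\lambda|\ge c\,(|z|^\alpha+\lambda)$ for all $\lambda\ge\lambda_1$ and all $z\in\Sigma_{\theta,\kappa}$, for some $c=c(\theta,\alpha)>0$, provided $\kappa$ is chosen large enough (depending on $a$, $\alpha$, $\theta$). To see this, note that for $z\in\Sigma_{\theta,\kappa}$ one has $\arg(z^\alpha)=\alpha\arg z\in[-\alpha\theta,\alpha\theta]$, and since $\alpha<1$ and $\theta<\pi$ we can (after possibly shrinking $\theta$ toward $\pi$, or simply using $\alpha\theta<\pi$) ensure $z^\alpha$ stays in a sector strictly contained in the right half-plane shifted appropriately — more precisely $|\arg(z^\alpha)|\le\alpha\theta$, and because $a\ge 0$ is wait: here $a=\frac{1-m}{2m-1}$ can be negative when $m\in(1/2,1]$, so $a$ is merely a fixed real constant. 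Still, $|z^\alpha|=|z|^\alpha\ge\kappa^\alpha$, so by taking $\kappa$ large the bounded perturbation $a$ is absorbed: $|z^\alpha+a+\lambda|\ge|z^\alpha+\lambda|-|a|$. The term $z^\alpha+\lambda$ with $\lambda>0$ real and $z^\alpha$ in a sector of half-angle $\alpha\theta$: if $\alpha\theta\le\pi$ this need not be bounded below by a multiple of $|z|^\alpha+\lambda$ near $\arg z^\alpha=\pi$, so one genuinely uses $\alpha<1$ together with $\theta<\pi$ to get $\alpha\theta<\pi$, hence there is $\delta>0$ with $|\arg(z^\alpha)|\le\pi-\delta$; then $|z^\alpha+\lambda|\ge\sin(\delta)\max(|z|^\alpha,\lambda)\ge\tfrac{\sin\delta}{2}(|z|^\alpha+\lambda)$ by an elementary triangle/geometry argument, and subtracting $|a|\le\tfrac{\sin\delta}{4}\kappa^\alpha\le\tfrac{\sin\delta}{4}|z|^\alpha$ for $\kappa$ large gives $|z^\alpha+a+\lambda|\ge\tfrac{\sin\delta}{4}(|z|^\alpha+\lambda)$.

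Granting this lower bound, the two estimates follow immediately: for the first,
\[
\bigl|z^\beta(z^\alpha+a+\lambda_j)^{-1}\bigr|\le\frac{|z|^\beta}{c(|z|^\alpha+\lambda_j)}\le\frac{|z|^\beta}{c|z|^\alpha}=\frac{1}{c}|z|^{\beta-\alpha},
\]
and for the second, using $\lambda_j\le|z|^\alpha+\lambda_j$,
\[
\bigl|\lambda_j z^\beta(z^\alpha+a+\lambda_j)^{-1}\bigr|\le\frac{\lambda_j|z|^\beta}{c(|z|^\alpha+\lambda_j)}\le\frac{1}{c}|z|^\beta.
\]
Taking the supremum over $j$ and invoking Parseval converts these into the claimed operator-norm bounds on $L^2(\Omega)$, since $\|AH(z,A,\alpha,\beta)v\|_{L^2}^2=\sum_j|\lambda_j z^\beta(z^\alpha+a+\lambda_j)^{-1}|^2|v_j|^2$. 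The main obstacle is purely the sectorial resolvent estimate of the previous paragraph — making the geometry of $z^\alpha+\lambda$ precise and correctly quantifying how large $\kappa$ must be to swallow the constant $a$ (whose sign is not fixed); everything after that is a one-line spectral calculation.
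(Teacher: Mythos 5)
Your proof is correct, but it takes a different route from the paper's. You diagonalize $A$ using the orthonormal eigenbasis $\{\phi_j\}$ of the restricted Dirichlet fractional Laplacian and reduce both bounds to the scalar estimate $|z^\alpha+a+\lambda|\geq c\,(|z|^\alpha+\lambda)$, which you prove by the sector geometry of $z^\alpha$ (using $\alpha\theta<\pi$) and by absorbing $|a|$ for $\kappa$ large; your multiplier computations and the Parseval step are all sound, and you correctly treat $a$ as a real constant of unspecified sign. The paper instead never opens up the spectrum: it writes $u=H(z,A,\alpha,\beta)v$ as $u=(z^{\alpha}+A)^{-1}(-au+z^{\beta}v)$, invokes the sectorial resolvent estimate $\|(z^\alpha+A)^{-1}\|_{L^2\to L^2}\leq C|z|^{-\alpha}$ cited from the Acosta--Bersetche--Borthagaray reference, absorbs the $|a|\,\|u\|_{L^2}$ term by taking $\kappa$ large, and obtains the second bound from the operator identity $AH(z,A,\alpha,\beta)=z^{\beta}(I-(z^{\alpha}+a)H(z,A,\alpha,0))$. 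What your approach buys is self-containedness --- you effectively reprove the cited resolvent estimate --- at the cost of an extra (standard but unstated in the paper) ingredient, namely that the form $\langle\cdot,\cdot\rangle_s$ on $\hat{H}^s(\Omega)$ is symmetric and coercive with compact embedding into $L^2(\Omega)$, so that $A$ admits a complete orthonormal eigenbasis; the paper's operator-identity argument is shorter and would survive for non-self-adjoint sectorial perturbations where no eigenbasis is available. One presentational remark: the aside in your second paragraph where you start to assume $a\geq0$ and then correct yourself mid-sentence should be edited out of a final write-up, though the mathematics as ultimately stated does not depend on the sign of $a$.
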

\begin{proof}
Let $u=H(z,A,\alpha,\beta)v$. By simple calculations, we obtain
\begin{equation*}
u=(z^{\alpha}+A)^{-1}(-au+z^{\beta}v).
\end{equation*}
Taking $L^2$ norm on both sides and using the resolvent estimates provided in \cite{Acosta20174}, we have
\begin{equation*}
\|u\|_{L^2(\Omega)}\leq C|z|^{-\alpha}(|a|\|u\|_{L^2(\Omega)}+|z|^{\beta}\|v\|_{L^2(\Omega)}),
\end{equation*}
which leads to the first desired estimate by taking $\kappa$ large enough and $|z|>\kappa$. Since $AH(z,A,\alpha,\beta)=z^{\beta}(I-(z^\alpha+a) H(z,A,\alpha,0))$, it can be easily got the second estimate.
\end{proof}

Then we provide the resolvent estimate in $\hat{H}^{1/2+s-\epsilon}(\Omega)$.
\begin{lemma}\label{lemtheestimateofHblablainH}
	Let $A$ be the fractional Laplacian $(-\Delta)^s$ with homogeneous Dirichlet boundary condition and $s<1/2$. When $z\in\Sigma_{\theta,\kappa}$, $ \pi/2<\theta<\pi $ and $\kappa$ is large enough, we have the estimates
	\begin{equation*}
	\begin{aligned}
	&\| H(z,A,\alpha,\beta)\|_{\hat{H}^{\sigma}(\Omega)\rightarrow \hat{H}^{\sigma}(\Omega)}\leq C|z|^{\beta-\alpha},\  \| AH(z,A,\alpha,\beta)\|_{\hat{H}^{\sigma}(\Omega)\rightarrow \hat{H}^{\sigma}(\Omega)}\leq C|z|^{\beta},
	\end{aligned}
	\end{equation*}
	where $H(z,A,\alpha,\beta)$ is defined in \eqref{equdefHz} and $\sigma\in[0,1/2+s)$. Furthermore, there exists
	\begin{equation*}
	\|AH(z,A,\alpha,\beta)\|_{_{\hat{H}^{\tilde{\sigma}+2\mu s}(\Omega)\rightarrow \hat{H}^{\tilde{\sigma}}(\Omega)}}\leq C|z|^{(\beta-\mu\alpha)}, 
	\end{equation*}
	where $\tilde{\sigma}\in[0,1/2)$ and $\mu \in [0,1]$.
\end{lemma}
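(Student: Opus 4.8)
The plan is to mimic the proof of Lemma \ref{lemtheestimateofHblabla} but to work in the fractional Sobolev scale rather than in $L^2(\Omega)$; the essential new ingredient is a resolvent estimate for $A=(-\Delta)^s$ on the spaces $\hat H^{\sigma}(\Omega)$. First I would record the mapping properties of $A$ that we are allowed to use: by the elliptic regularity of Theorem \ref{thmregdiri} together with the identification $\hat H^s(\Omega)=H^s(\Omega)$ for $s<1/2$, the operator $A$ sends $\hat H^{\tilde\sigma+2s}(\Omega)$ boundedly into $\hat H^{\tilde\sigma}(\Omega)$ for $\tilde\sigma\in[0,1/2)$, and $(z^\alpha+A)^{-1}$ obeys the same resolvent bound $\|(z^\alpha+A)^{-1}\|_{\hat H^\sigma\to\hat H^\sigma}\le C|z|^{-\alpha}$ on each admissible $\hat H^\sigma$ with $\sigma\in[0,1/2+s)$ — this is exactly the statement in \cite{Acosta20174} that underlies Lemma \ref{lemtheestimateofHblabla}, now read on the shifted scale. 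Granting this, set $u=H(z,A,\alpha,\beta)v$, so that $u=(z^\alpha+A)^{-1}(-au+z^\beta v)$ as in the previous lemma; taking the $\hat H^\sigma$-norm and absorbing the $|a|\,|z|^{-\alpha}\|u\|_{\hat H^\sigma}$ term into the left side for $\kappa$ large gives $\|H(z,A,\alpha,\beta)\|_{\hat H^\sigma\to\hat H^\sigma}\le C|z|^{\beta-\alpha}$. The bound on $AH(z,A,\alpha,\beta)$ on $\hat H^\sigma$ then follows verbatim from the algebraic identity $AH(z,A,\alpha,\beta)=z^\beta\bigl(I-(z^\alpha+a)H(z,A,\alpha,0)\bigr)$ used already in Lemma \ref{lemtheestimateofHblabla}, combined with the just-established $L^2$-type bound on $H(z,A,\alpha,0)$ in $\hat H^\sigma$.

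For the last, smoothing estimate I would interpolate. The endpoint $\mu=0$ is the bound $\|AH(z,A,\alpha,\beta)\|_{\hat H^{\tilde\sigma}\to\hat H^{\tilde\sigma}}\le C|z|^\beta$ just proved (valid since $\tilde\sigma\in[0,1/2)\subset[0,1/2+s)$). For the endpoint $\mu=1$ I would write $AH(z,A,\alpha,\beta)=\bigl(A(z^\alpha+A)^{-1}\bigr)\,z^\beta(z^\alpha+a+A)^{-1}\cdot(\text{correction in }a)$; more cleanly, use $H(z,A,\alpha,\beta)=z^{\beta-\alpha}\bigl(I-(a+A)H(z,A,\alpha,0)\bigr)$ together with the fact that $A(z^\alpha+A)^{-1}$ maps $\hat H^{\tilde\sigma+2s}(\Omega)$ into $\hat H^{\tilde\sigma}(\Omega)$ with norm $O(1)$ (by elliptic regularity plus the resolvent bound), so that $AH(z,A,\alpha,\beta)$ maps $\hat H^{\tilde\sigma+2s}\to\hat H^{\tilde\sigma}$ with norm $O(|z|^{\beta-\alpha})$. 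Interpolating the pair $\bigl(\hat H^{\tilde\sigma}\to\hat H^{\tilde\sigma},\ O(|z|^\beta)\bigr)$ at $\mu=0$ and $\bigl(\hat H^{\tilde\sigma+2s}\to\hat H^{\tilde\sigma},\ O(|z|^{\beta-\alpha})\bigr)$ at $\mu=1$ with parameter $\mu\in[0,1]$ yields $\|AH(z,A,\alpha,\beta)\|_{\hat H^{\tilde\sigma+2\mu s}\to\hat H^{\tilde\sigma}}\le C|z|^{\beta-\mu\alpha}$, which is the claim; here one uses that the complex interpolation space between $\hat H^{\tilde\sigma}(\Omega)$ and $\hat H^{\tilde\sigma+2s}(\Omega)$ is $\hat H^{\tilde\sigma+2\mu s}(\Omega)$, which is legitimate on the whole range because $\tilde\sigma+2s<1/2+2s<1/2+s$ fails in general, so some care with the upper endpoint of the scale is needed — see the obstacle below.

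The main obstacle is exactly the range of Sobolev exponents. Theorem \ref{thmregdiri} caps the gain of regularity at $1/2-\epsilon$, so $A(z^\alpha+A)^{-1}$ is only known to smooth by $2s$ as a map $\hat H^{\tilde\sigma+2s}\to\hat H^{\tilde\sigma}$ when the target exponent $\tilde\sigma$ stays below $1/2$ and the source exponent $\tilde\sigma+2s$ stays within the range where the interpolation identity for $\hat H^\bullet(\Omega)$ is valid; this forces $\tilde\sigma\in[0,1/2)$ and is the reason that parameter range appears in the statement. I would handle this by invoking the identification $\hat H^\sigma(\Omega)=H^\sigma(\Omega)$ for $\sigma<1/2$ (the Remark after \eqref{equdefHsOmega}), for which the complex interpolation scale is standard, and by checking that every Sobolev index actually used — namely $\tilde\sigma$, $\tilde\sigma+2\mu s$, and in the $\mu=1$ step $\tilde\sigma+2s$ — is controlled: the first two are automatically below $1/2+s$, and the third is where one must either assume $\tilde\sigma+2s<1/2+s$, i.e. lean on $s<1/2$, or localize the argument to the part of the scale below $1/2$. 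Once the bookkeeping of indices is pinned down, the rest is the same resolvent-absorption trick as in Lemma \ref{lemtheestimateofHblabla} plus one interpolation.
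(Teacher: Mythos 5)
Your overall architecture is the right one and for the second and third estimates it matches the paper: the identity $AH(z,A,\alpha,\beta)=z^{\beta}\bigl(I-(z^\alpha+a)H(z,A,\alpha,0)\bigr)$ transfers the first bound to $AH$, and the last estimate is obtained exactly as you say, by interpolating the $\mu=0$ endpoint $\|AH\|_{\hat{H}^{\tilde\sigma}\to\hat{H}^{\tilde\sigma}}\leq C|z|^{\beta}$ against the $\mu=1$ endpoint $\|AH\|_{\hat{H}^{\tilde\sigma+2s}\to\hat{H}^{\tilde\sigma}}\leq C|z|^{\beta-\alpha}$. (For that $\mu=1$ endpoint you do not need your worry about $\tilde\sigma+2s$ leaving the admissible scale: the paper simply writes $AH(z,A,\alpha,\beta)v=H(z,A,\alpha,\beta)Av$, applies the already-proved bound on $\hat{H}^{\tilde\sigma}$ with $\tilde\sigma<1/2<1/2+s$, and then uses only the forward boundedness $\|Av\|_{\hat{H}^{\tilde\sigma}}\leq C\|v\|_{\hat{H}^{\tilde\sigma+2s}}$, which holds for all indices. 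Your alternative decomposition through $z^{\beta-\alpha}(I-(a+A)H(z,A,\alpha,0))$ introduces an $A^2H$ term and is harder to close.)

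The genuine gap is in the first estimate. You assert that $\|(z^\alpha+A)^{-1}\|_{\hat{H}^{\sigma}\to\hat{H}^{\sigma}}\leq C|z|^{-\alpha}$ holds for every $\sigma\in[0,1/2+s)$ because it is ``exactly the statement in \cite{Acosta20174} \dots now read on the shifted scale.'' That reference supplies only the $L^2$ resolvent estimate; lifting it to $\hat{H}^{\sigma}(\Omega)$ is precisely the content of the lemma, and it is not automatic -- in particular the range extends past $\sigma=1/2$, where $\hat{H}^{\sigma}(\Omega)$ no longer coincides with $H^{\sigma}(\Omega)$ and boundary behavior enters, so ``the same bound on the shifted scale'' is circular as stated. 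The mechanism the paper uses, and which is missing from your proposal, is a bootstrap combining commutativity with elliptic regularity: since $H(z,A,\alpha,\beta)$ commutes with $A$, one has $\|Hv\|_{\hat{H}^{2s}(\Omega)}\leq C\|AHv\|_{L^2(\Omega)}=C\|H(Av)\|_{L^2(\Omega)}\leq C|z|^{\beta-\alpha}\|Av\|_{L^2(\Omega)}\leq C|z|^{\beta-\alpha}\|v\|_{\hat{H}^{2s}(\Omega)}$, where the first inequality is Theorem \ref{thmregdiri}; iterating this climbs the scale in steps of $2s$ up to indices $2ms$ with $(2m-1)s<1/2$, interpolation fills in the intermediate $\sigma$, and one final application of the regularity lift at $\sigma=1/2-s-\epsilon$ (where the gain $\gamma=1/2-\epsilon$ saturates) reaches $\hat{H}^{1/2+s-\epsilon}(\Omega)$, after which interpolation gives the whole range $[0,1/2+s)$. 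Without this argument the claimed range of $\sigma$ -- the part of the statement that is actually used later -- is unjustified.
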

\begin{proof}
	Assume $u=H(z,A,\alpha,\beta)v$ and $v=0$ in $\Omega^c$.  Using Theorem \ref{thmregdiri} and Lemma \ref{lemtheestimateofHblabla}, we have
	\begin{equation} \label{eqnorm}
    \begin{aligned}
	\|u\|_{\hat{H}^{2s}(\Omega)}\leq\|Au\|_{L^2(\Omega)} & =\|AH(z,A,\alpha,\beta)v\|_{L^2(\Omega)} \\
 & \leq C|z|^{\beta-\alpha}\|Av\|_{L^2(\Omega)}\leq C|z|^{\beta-\alpha}\|v\|_{\hat{H}^{2s}(\Omega)},
    \end{aligned}
	\end{equation}
which leads to
	\begin{equation*}
	\|H(z,A,\alpha,\beta)\|_{\hat{H}^{2s}(\Omega)\rightarrow \hat{H}^{2s}(\Omega)}\leq C|z|^{\beta-\alpha}.
	\end{equation*}
By induction, one can get 
     \begin{equation*}
	\|H(z,A,\alpha,\beta)\|_{\hat{H}^{2 m s}(\Omega)\rightarrow \hat{H}^{2m s}(\Omega)}\leq C|z|^{\beta-\alpha}~~ {\rm for} ~~(2m-1)s<1/2,
	\end{equation*}
where $m$ is a positive integer. By Lemma \ref{lemtheestimateofHblabla} and the interpolation property \cite{Adams2003}, there exists
	\begin{equation*}
	\|H(z,A,\alpha,\beta)\|_{\hat{H}^{\sigma}(\Omega)\rightarrow \hat{H}^{\sigma}(\Omega)}\leq C|z|^{\beta-\alpha},\quad \sigma\in[0,2 m s].
	\end{equation*}
Taking $\sigma=1/2-s-\epsilon$ in the above equation and using (\ref{eqnorm}), there is
	\begin{equation*}
	\|H(z,A,\alpha,\beta)\|_{\hat{H}^{1/2+s-\epsilon}(\Omega)\rightarrow \hat{H}^{1/2+s-\epsilon}(\Omega)}\leq C|z|^{\beta-\alpha}.
	\end{equation*}
	Similarly by interpolation property, one can obtain
	\begin{equation*}
	\|H(z,A,\alpha,\beta)\|_{\hat{H}^{\sigma}(\Omega)\rightarrow \hat{H}^{\sigma}(\Omega)}\leq C|z|^{\beta-\alpha},\quad \sigma\in [0,1/2+s).
	\end{equation*}
Noting that $AH(z,A,\alpha,\beta)=z^{\beta}(I-(z^\alpha+a) H(z,A,\alpha,0))$, the second estimate can be got. On the other hand, let $u=AH(z,A,\alpha,\beta)v$ and $v=0$ in $\Omega^c$. For $\tilde{\sigma}\in[0,1/2)$, we have
	\begin{equation*}
		\|u\|_{\hat{H}^{\tilde{\sigma}}(\Omega)}=\|AH(z,A,\alpha,\beta)v\|_{\hat{H}^{\tilde{\sigma}}(\Omega)}\leq C|z|^{\beta-\alpha}\|Av\|_{\hat{H}^{\tilde{\sigma}}(\Omega)}\leq C|z|^{\beta-\alpha}\|v\|_{\hat{H}^{{\tilde{\sigma}}+2s}(\Omega)},
	\end{equation*}
	 which leads to $\|AH(z,A,\alpha,\beta)\|_{_{\hat{H}^{\tilde{\sigma}+2s}(\Omega)\rightarrow \hat{H}^{\tilde{\sigma}}(\Omega)}}\leq C|z|^{\beta-\alpha}$. Using the property of interpolation, we obtain
	\begin{equation*}
		\|AH(z,A,\alpha,\beta)\|_{_{\hat{H}^{\tilde{\sigma}+2\mu s}(\Omega)\rightarrow \hat{H}^{\tilde{\sigma}}(\Omega)}}\leq C|z|^{(\beta-\mu\alpha)}, \quad \mu \in [0,1].
	\end{equation*}
\end{proof}

\begin{lemma}\label{lemestZ}
Let $\kappa$ satisfy the conditions given in Lemma \ref{lemtheestimateofHblabla} and $\Omega\subset \mathbb{R}^n$. Then we have the estimate
	\begin{equation*}
	\int_{\Gamma_{\theta,\kappa}}|e^{zt}||z|^\alpha |dz|\leq Ct^{-\alpha-1}.\\
	\end{equation*}
\end{lemma}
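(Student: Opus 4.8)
The plan is to decompose $\Gamma_{\theta,\kappa}$ into its three constituent pieces — the two rays $\{re^{\pm\mathbf{i}\theta}:r\ge\kappa\}$ and the circular arc $\{\kappa e^{\mathbf{i}\psi}:|\psi|\le\theta\}$ — and to bound the contribution of each separately, the essential point being that $\cos\theta<0$ since $\pi/2<\theta<\pi$.

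First I would handle the rays. Parametrizing $z=re^{\pm\mathbf{i}\theta}$ with $r\ge\kappa$ gives $|z|=r$, $|dz|=dr$, and $\mathrm{Re}(z)=r\cos\theta=-r|\cos\theta|$, so $|e^{zt}|=e^{-rt|\cos\theta|}$. Hence each ray contributes at most $\int_\kappa^\infty e^{-rt|\cos\theta|}r^\alpha\,dr$, and the substitution $u=rt|\cos\theta|$ rewrites this as $(t|\cos\theta|)^{-\alpha-1}\int_{\kappa t|\cos\theta|}^\infty e^{-u}u^\alpha\,du\le \Gamma(\alpha+1)(t|\cos\theta|)^{-\alpha-1}$, which already has the desired form $Ct^{-\alpha-1}$; here one uses $\alpha>-1$ so that the $\Gamma$-integral converges, which is the case in every application of the lemma.

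Next, on the arc $|z|=\kappa$ is constant and $\mathrm{Re}(z)=\kappa\cos\psi\le\kappa$, so $|e^{zt}|\le e^{\kappa t}$ and $|dz|=\kappa\,d\psi$; the arc therefore contributes at most $2\theta\kappa^{\alpha+1}e^{\kappa t}$. Since $t$ lies in the bounded interval $(0,T]$ one has $e^{\kappa t}\le e^{\kappa T}$ and $1\le (T/t)^{\alpha+1}$, so this piece is again bounded by $Ct^{-\alpha-1}$ with $C=C(\theta,\kappa,\alpha,T)$. Adding the three contributions finishes the proof.

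I expect the only delicate point to be the arc term: unlike the ray integrals it does not scale like $t^{-\alpha-1}$ on its own, and one must invoke the boundedness of the time interval (together with $\alpha>-1$) to absorb it into the stated bound. If a bound uniform in $t>0$ were wanted one would instead take $\kappa$ proportional to $1/t$; with $\kappa$ fixed the estimate is to be read for $t\in(0,T]$, which is all that is needed in the sequel.
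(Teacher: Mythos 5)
Your decomposition of $\Gamma_{\theta,\kappa}$ into the two rays and the arc, and the way you bound each piece, is exactly the paper's argument, and for $\alpha>-1$ your proof is complete. The gap is the claim that "$\alpha>-1$ \ldots is the case in every application of the lemma": it is not. In the proof of Theorem \ref{thmhomoregularitynonsmooth} the lemma is invoked for the kernel $aH(z,A_1,\alpha_1,\alpha_1-\alpha_2)H(z,A_2,\alpha_2,\alpha_2-1)$, whose operator norm is $O(|z|^{-1-\alpha_2})$, i.e.\ the lemma is used with exponent $\alpha=-1-\alpha_2\in(-2,-1)$ (this is what produces the $Ct^{\alpha_2}\|G_{2,0}\|_{L^2(\Omega)}$ term); similarly the convolution kernel requires $\alpha=-\alpha_1-\alpha_2$, which is $<-1$ whenever $\alpha_1+\alpha_2>1$, and the term $C\|G_{1,0}\|_{L^2(\Omega)}$ uses the borderline exponent $\alpha=-1$. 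For $\alpha<-1$ with $\kappa$ fixed both halves of your argument fail: $\Gamma(\alpha+1)$ no longer bounds the ray integral (and at $\alpha=-1$ the ray integral itself grows like $\log(1/t)$ as $t\to0$, so the asserted bound $Ct^{0}$ is false for fixed $\kappa$), while the arc contributes a genuine constant $\asymp\kappa^{1+\alpha}$ that cannot be dominated by $Ct^{-\alpha-1}$, since now $-\alpha-1>0$ and $t^{-\alpha-1}\to0$ as $t\to0$; your inequality $1\le(T/t)^{\alpha+1}$ reverses direction when $\alpha+1<0$.

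The repair is precisely the alternative you mention and then set aside as unnecessary: because the integrand is analytic in the relevant region, one may take $\kappa=\max(\kappa_0,1/t)$ (with $\kappa_0$ the threshold from Lemma \ref{lemtheestimateofHblabla}) without changing any of the contour representations in which the lemma is applied. With this choice the arc term is $\le C\kappa^{1+\alpha}\le Ct^{-\alpha-1}$ for $\alpha\le -1$, and the ray integral satisfies $\int_{\kappa}^{\infty}e^{r\cos(\theta)t}r^{\alpha}\,dr\le\int_{1/t}^{\infty}e^{r\cos(\theta)t}r^{\alpha}\,dr=t^{-\alpha-1}\int_{1}^{\infty}e^{u\cos(\theta)}u^{\alpha}\,du\le Ct^{-\alpha-1}$. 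This is exactly how the paper closes the case $\alpha<-1$; your write-up needs that case handled, not merely remarked upon.
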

\begin{proof}
	By simple calculation and taking $r=|z|$, we have
\begin{equation*}
\begin{aligned}
&\int_{\Gamma_{\theta,\kappa}}|e^{zt}||z|^\alpha |dz|
=\int_{\kappa}^{\infty}e^{r\cos(\theta)t}r^{\alpha}dr+\kappa^{1+\alpha}\int_{-\theta}^{\theta}e^{\kappa\cos(\eta) t}d\eta
\leq Ct^{-\alpha-1}+C\kappa^{1+\alpha}.
\end{aligned}
\end{equation*}
When $\alpha\geq-1$, using the fact $T/t>1$, we can get the desired estimate. And when $\alpha<-1$, the desired estimate can be got by taking $\kappa>1/t$.
\end{proof}

Next, we provide the following Gr\"{o}nwall inequality which is similar to the one provided in \cite{Laesson1992}.
\begin{lemma}\label{lemgrondwall}
    Let the function $\phi(t)\geq 0$ be continuous for $0< t\leq T$. If
    \begin{equation*}
        \phi(t)\leq \sum_{k=1}^Na_kt^{-1+\alpha_k}+b\int_0^t(t-s)^{-1+\beta}\phi(s)ds,\ 0<t\leq T,
    \end{equation*}
    for some constants $\{a_k\}_{k=0}^N$, $\{\alpha_k\}_{k=0}^N$, $b\geq0$,  $\beta>0$, then there is a constant $C=C(b,T,\alpha,\beta)$ such that
    \begin{equation*}
        \phi(t)\leq C\sum_{k=1}^Na_kt^{-1+\alpha_k} ~~{\rm for } ~~  0<t\leq T.
    \end{equation*}
\end{lemma}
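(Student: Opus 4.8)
The plan is to iterate the integral inequality and show that the iterates converge, using the fact that convolution powers of the kernel $t^{-1+\beta}$ are again of the form $t^{-1+k\beta}$ (up to Beta-function constants), so that each term of the form $a_k t^{-1+\alpha_k}$ generates a convergent series when fed through the integral operator repeatedly. First I would introduce the linear operator $(B\psi)(t) = b\int_0^t (t-s)^{-1+\beta}\psi(s)\,ds$ acting on functions that are continuous on $(0,T]$ and integrable near $0$, and rewrite the hypothesis as $\phi \le g + B\phi$ pointwise, where $g(t) = \sum_{k=1}^N a_k t^{-1+\alpha_k}$. Since $B$ is monotone (maps nonnegative functions to nonnegative functions) and $\phi$ is nonnegative, iterating gives $\phi \le \sum_{j=0}^{m-1} B^j g + B^m \phi$ for every $m\ge 1$.

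The key computation is the action of $B^j$ on a single power $t^{-1+\alpha}$. Using the Beta integral $\int_0^t (t-s)^{-1+\beta} s^{-1+\alpha}\,ds = \frac{\Gamma(\beta)\Gamma(\alpha)}{\Gamma(\alpha+\beta)} t^{-1+\alpha+\beta}$, one gets by induction
\[
(B^j)(t^{-1+\alpha}) = b^j\,\frac{\Gamma(\beta)^j \Gamma(\alpha)}{\Gamma(\alpha+j\beta)}\, t^{-1+\alpha+j\beta}.
\]
Summing over $j$, the total contribution of the term $a_k t^{-1+\alpha_k}$ is $a_k t^{-1+\alpha_k}\sum_{j\ge 0} \frac{(b\Gamma(\beta))^j \Gamma(\alpha_k)}{\Gamma(\alpha_k + j\beta)} t^{j\beta}$, and since $\Gamma(\alpha_k + j\beta)$ grows super-exponentially in $j$ (by Stirling) while $t\le T$, this series converges and is bounded on $(0,T]$ by $C a_k t^{-1+\alpha_k}$ with $C = C(b,T,\alpha_k,\beta)$; this is essentially a Mittag-Leffler-type bound. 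Adding the finitely many $k$ yields $\sum_{j\ge 0} B^j g \le C\sum_{k=1}^N a_k t^{-1+\alpha_k}$.

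It then remains to argue that the remainder $B^m\phi \to 0$. Here I would use that $\phi$, being continuous on $(0,T]$, is bounded on any $[\delta,T]$ and is dominated near $0$ by $g + B\phi$; in fact from $\phi\le g+B\phi$ and the local integrability of $g$ one gets $\phi(t)\le C't^{-1+\alpha_{\min}}$ for $t$ small where $\alpha_{\min}=\min_k\alpha_k$ — actually it is cleanest to first establish this crude a priori bound $\phi(t)\le C t^{-1+\alpha_{\min}}$ on all of $(0,T]$ by one application of the iteration with $\phi$ itself bounded by such a power, and then feed it back in. Then $B^m\phi(t) \le C (b\Gamma(\beta))^m \Gamma(\alpha_{\min})/\Gamma(\alpha_{\min}+m\beta)\, t^{-1+\alpha_{\min}+m\beta} \to 0$ uniformly on $(0,T]$ as $m\to\infty$, since the Gamma function in the denominator dominates. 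Passing to the limit $m\to\infty$ in $\phi \le \sum_{j=0}^{m-1} B^j g + B^m\phi$ gives the claim. The main obstacle, and the only place requiring care, is making the interchange of summation and the estimate on $B^j(t^{-1+\alpha})$ rigorous and uniform in $t\in(0,T]$ — i.e. controlling $\sum_j (b\Gamma(\beta))^j t^{j\beta}/\Gamma(\alpha+j\beta)$ — which is handled by Stirling's asymptotics for $\Gamma(\alpha+j\beta)$; everything else is a routine induction with the Beta integral.
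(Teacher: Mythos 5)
Your proof is correct, but it takes a genuinely different route from the paper. The paper (whose argument is only sketched, with reference to Larsson's Cahn--Hilliard paper) iterates the inequality a \emph{finite} number of times: using $\int_0^t(t-s)^{-1+\beta}s^{-1+\alpha_k}\,ds=C(\alpha_k,\beta)\,t^{-1+\alpha_k+\beta}$ it bumps the kernel exponent up to the smallest $M$ with $M\beta\geq 1$, at which point $(t-s)^{-1+M\beta}\leq T^{-1+M\beta}$ is bounded and the classical Gr\"onwall lemma applies (after the substitution $\psi(t)=t^{1-\alpha_1}\phi(t)$ when the forcing is singular). You instead sum the full Neumann series $\sum_{j\ge 0}B^jg$, which is the classical Henry-type argument; it yields an explicit Mittag--Leffler constant $C=\Gamma(\alpha_k)E_{\beta,\alpha_k}\bigl(b\Gamma(\beta)T^{\beta}\bigr)$ and avoids invoking the standard Gr\"onwall lemma as a black box, at the price of having to control the remainder $B^m\phi$. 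Two small remarks on your version: (i) the Beta-integral computation, hence both proofs, silently requires $\alpha_k>0$ (the paper's commented source assumes $0<\alpha_1<\cdots<\alpha_N$ ``without loss of generality''), and the lemma is vacuous or false without some integrability of $\phi$ near $0$ -- an implicit hypothesis both you and the paper need; (ii) your a priori power bound $\phi(t)\le C't^{-1+\alpha_{\min}}$ is more than you need for the remainder: since the kernel of $B^m$ is $b^m\Gamma(\beta)^m(t-s)^{-1+m\beta}/\Gamma(m\beta)$, it is uniformly bounded once $m\beta\geq 1$, so $B^m\phi(t)\leq b^m\Gamma(\beta)^mT^{m\beta-1}\|\phi\|_{L^1(0,T)}/\Gamma(m\beta)\to 0$ already follows from $\phi\in L^1(0,T)$ and Stirling. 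With that simplification your argument is complete and, if anything, more quantitative than the paper's.
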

\begin{proof}
The proof is similar to the one provided in \cite{Laesson1992}.
\end{proof}

Then we present the priori estimates for the solutions $G_1$ and $G_2$ of \eqref{equrqtosol} with nonsmooth initial value.
\begin{theorem}\label{thmhomoregularitynonsmooth}
   Let $\gamma_1<\min(s_1,1/2-\epsilon)$ and $\gamma_2<\min(s_2,1/2-\epsilon)$. If $G_{1,0},\ G_{2,0}\in L^2(\Omega)$,  then we have
    \begin{equation*}
    \begin{aligned}
    \|G_1(t)\|_{L^2(\Omega)}\leq& C\|G_{1,0}\|_{L^2(\Omega)}+C\|G_{2,0}\|_{L^2(\Omega)},\\
    \|G_2(t)\|_{L^2(\Omega)}\leq& C\|G_{1,0}\|_{L^2(\Omega)}+C\|G_{2,0}\|_{L^2(\Omega)};
    \end{aligned}
    \end{equation*}
    and
    \begin{equation*}
    \begin{aligned}
    \|G_1(t)\|_{\hat{H}^{s_1+\gamma_1}(\Omega)}\leq Ct^{-\alpha_1}\|G_{1,0}\|_{L^2(\Omega)}+Ct^{\min(0,\alpha_2-\alpha_1)}\|G_{2,0}\|_{L^2(\Omega)},\\
    \|G_2(t)\|_{\hat{H}^{s_2+\gamma_2}(\Omega)}\leq Ct^{\min(0,\alpha_1-\alpha_2)}\|G_{1,0}\|_{L^2(\Omega)}+Ct^{-\alpha_2}\|G_{2,0}\|_{L^2(\Omega)}.
    \end{aligned}
    \end{equation*}
\end{theorem}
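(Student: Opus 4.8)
The plan is to work entirely in the Laplace domain using the representations \eqref{equsolrepinlap1} and then recover the time-domain estimates via the inverse Laplace transform along the contour $\Gamma_{\theta,\kappa}$. First I would treat the self-coupled representation for $\tilde G_1$: solving \eqref{equsolrepinlap1} for $\tilde G_1$ gives
\begin{equation*}
\tilde G_1 = \bigl(I - a^2 H(z,A_1,\alpha_1,\alpha_1-\alpha_2)H(z,A_2,\alpha_2,\alpha_2-\alpha_1)\bigr)^{-1}\bigl(H(z,A_1,\alpha_1,\alpha_1-1)G_{1,0} + a H(z,A_1,\alpha_1,\alpha_1-\alpha_2)H(z,A_2,\alpha_2,\alpha_2-1)G_{2,0}\bigr),
\end{equation*}
and symmetrically for $\tilde G_2$. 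By Lemma \ref{lemtheestimateofHblabla}, $\|H(z,A_1,\alpha_1,\alpha_1-\alpha_2)H(z,A_2,\alpha_2,\alpha_2-\alpha_1)\|_{L^2\to L^2}\leq C|z|^{(\alpha_1-\alpha_2-\alpha_1)+(\alpha_2-\alpha_1-\alpha_2)} = C|z|^{-\alpha_1-\alpha_2}$, which is $\le 1/2$ for $|z|\ge\kappa$ with $\kappa$ large; hence the Neumann series for the inverse converges and the prefactor is bounded on $L^2(\Omega)$ uniformly in $z\in\Sigma_{\theta,\kappa}$. Then $\|\tilde G_1(z)\|_{L^2}\le C|z|^{-1}\|G_{1,0}\|_{L^2} + C|z|^{(\alpha_1-\alpha_2)+(\alpha_2-1)}\|G_{2,0}\|_{L^2} = C|z|^{-1}\|G_{1,0}\|_{L^2}+C|z|^{\alpha_1-1}\|G_{2,0}\|_{L^2}$.

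Next I would invert. Writing $G_1(t) = \frac{1}{2\pi\mathbf{i}}\int_{\Gamma_{\theta,\kappa}} e^{zt}\tilde G_1(z)\,dz$ and applying Lemma \ref{lemestZ} with the exponents above ($\alpha = -1$ and $\alpha = \alpha_1 - 1 \ge -1$) yields $\|G_1(t)\|_{L^2}\le C t^{0}\|G_{1,0}\|_{L^2} + C t^{-\alpha_1}\|G_{2,0}\|_{L^2}$. The bare constant bound $\|G_1(t)\|_{L^2}\le C\|G_{1,0}\|_{L^2}+C\|G_{2,0}\|_{L^2}$ asserted in the theorem does not literally follow from this route for the $G_{2,0}$ term near $t=0$, so I would instead argue it by combining \eqref{equsolrepinlap0} directly: from $\tilde G_1 = H(z,A_1,\alpha_1,\alpha_1-1)G_{1,0}+aH(z,A_1,\alpha_1,\alpha_1-\alpha_2)\tilde G_2$ one gets $\|\tilde G_1\|_{L^2}\le C|z|^{-1}\|G_{1,0}\|_{L^2}+C|z|^{-\alpha_2}\|\tilde G_2\|_{L^2}$ (since $(\alpha_1-\alpha_2)-\alpha_1=-\alpha_2$), and symmetrically; substituting each into the other and using $|z|^{-\alpha_1-\alpha_2}\le 1$ for $|z|\ge\kappa$ closes the system to give $\|\tilde G_i\|_{L^2}\le C|z|^{-1}(\|G_{1,0}\|_{L^2}+\|G_{2,0}\|_{L^2})$, whose inversion via Lemma \ref{lemestZ} with $\alpha=-1$ produces the uniform-in-$t$ bound. (If the contour-integral route genuinely forces a $t^{-\alpha_i}$-type singularity for the cross term, then the theorem's first pair of estimates should be read with that understanding; I would flag which argument the authors intend.)

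For the smoothing estimates in $\hat H^{s_i+\gamma_i}$, I would use the identity $\|w\|_{\hat H^{2s}(\Omega)}\le \|Aw\|_{L^2(\Omega)}$ together with Theorem \ref{thmregdiri} to interpolate: more precisely, $\hat H^{s+\gamma}$-control of $H(z,A,\alpha,\beta)v$ for $\gamma<\min(s,1/2-\epsilon)$ follows from bounding $AH(z,A,\alpha,\beta)v$ in an appropriate negative-smoothness space and invoking elliptic regularity, exactly as in the second half of Lemma \ref{lemtheestimateofHblablainH} with $\tilde\sigma$ chosen so that $\tilde\sigma + 2s$ matches. For the $G_{1,0}$ term in $\tilde G_1$ I would bound $\|A_1 H(z,A_1,\alpha_1,\alpha_1-1)G_{1,0}\|_{L^2}\le C|z|^{\alpha_1-1}\|G_{1,0}\|_{L^2}$ by Lemma \ref{lemtheestimateofHblabla}, giving via Theorem \ref{thmregdiri} the bound $\|\tilde G_1(z)\|_{\hat H^{s_1+\gamma_1}}\le C|z|^{\alpha_1-1}\|G_{1,0}\|_{L^2} + (\text{cross term})$; inversion with Lemma \ref{lemestZ} ($\alpha = \alpha_1-1$) gives the $t^{-\alpha_1}\|G_{1,0}\|_{L^2}$ contribution. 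For the cross term, substitute the $L^2$ bound on $\tilde G_2$ already obtained: $\|A_1 H(z,A_1,\alpha_1,\alpha_1-\alpha_2)\tilde G_2\|_{L^2}\le C|z|^{\alpha_1-\alpha_2}\|\tilde G_2\|_{L^2}\le C|z|^{\alpha_1-\alpha_2}|z|^{-1}\cdot|z|^{\max(0,\alpha_1)}\cdots$ — carefully tracking exponents, this lands on $|z|^{-1-\min(0,\alpha_2-\alpha_1)}$ after using the sharpest available $L^2$ bound on $\tilde G_2$, and Lemma \ref{lemestZ} then yields the stated $t^{\min(0,\alpha_2-\alpha_1)}\|G_{2,0}\|_{L^2}$. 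The estimate for $G_2$ is entirely symmetric (swap indices $1\leftrightarrow2$).

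The main obstacle I anticipate is the bookkeeping of exponents when the two fractional orders $\alpha_1,\alpha_2$ differ and $s_1\neq s_2$: one must repeatedly feed the best known $L^2$-bound on $\tilde G_j$ into the coupling term for $\tilde G_i$, and the sign of $\alpha_i-\alpha_j$ determines whether a factor improves or worsens the decay in $|z|$, hence whether the time-domain estimate is singular or bounded at $t=0$ — this is precisely why the $\min(0,\alpha_i-\alpha_j)$ appears. A secondary technical point is justifying that the resolvent-type operator $I - a^2 H(z,A_1,\alpha_1,\alpha_1-\alpha_2)H(z,A_2,\alpha_2,\alpha_2-\alpha_1)$ is invertible with uniformly bounded inverse on both $L^2(\Omega)$ and $\hat H^{\sigma}(\Omega)$ for $\sigma<1/2+s_1$; this follows from Lemmas \ref{lemtheestimateofHblabla} and \ref{lemtheestimateofHblablainH} via a Neumann series once $\kappa$ is taken large enough, so it is routine but must be stated. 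Finally, continuity of $\phi(t)=\|G_i(t)\|$ on $(0,T]$, needed only if one prefers to route through the Grönwall Lemma \ref{lemgrondwall} rather than direct substitution, can be taken from the standard theory of the inverse Laplace representation.
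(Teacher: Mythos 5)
Your overall strategy is sound and, via your fallback argument, does establish the theorem, but the ``discrepancy'' you flag in your first paragraph is an artifact of an exponent slip, not a real issue with the statement. By Lemma \ref{lemtheestimateofHblabla} each factor $H(z,A,\alpha,\beta)$ contributes $|z|^{\beta-\alpha}$, so the $G_{2,0}$ cross term in your Neumann-series representation obeys
\begin{equation*}
\|H(z,A_1,\alpha_1,\alpha_1-\alpha_2)H(z,A_2,\alpha_2,\alpha_2-1)\|_{L^2(\Omega)\rightarrow L^2(\Omega)}\le C|z|^{-\alpha_2}\cdot|z|^{-1}=C|z|^{-\alpha_2-1},
\end{equation*}
not $C|z|^{(\alpha_1-\alpha_2)+(\alpha_2-1)}=C|z|^{\alpha_1-1}$ as you wrote: there you summed only the $\beta$'s, although you computed $\beta-\alpha$ correctly for the product $HH$ one line earlier. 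With the correct exponent, Lemma \ref{lemestZ} gives $Ct^{\alpha_2}\|G_{2,0}\|_{L^2(\Omega)}$, which is bounded on $(0,T]$; this is precisely the term the paper obtains before absorbing it into a constant via $t\le T$. So the uniform $L^2$ bound does follow from the contour route, and no hedge about ``which argument the authors intend'' is needed. Your fallback (substituting the two inequalities from \eqref{equsolrepinlap0} into each other and absorbing the $C|z|^{-\alpha_1-\alpha_2}\|\tilde G_1\|$ term for $\kappa$ large) is also correct and yields the same conclusion. The smoothing estimates close as you outline once the exponents are actually written out: with $\|\tilde G_2\|_{L^2(\Omega)}\le C|z|^{-1}(\|G_{1,0}\|_{L^2(\Omega)}+\|G_{2,0}\|_{L^2(\Omega)})$ the cross term is $O(|z|^{\alpha_1-\alpha_2-1})$, hence $O(t^{\alpha_2-\alpha_1})$ after inversion, and on $(0,T]$ one has $t^{\alpha_2-\alpha_1}\le Ct^{-\alpha_1}$ and $t^{\alpha_2-\alpha_1}\le Ct^{\min(0,\alpha_2-\alpha_1)}$, which places both resulting contributions inside the stated bound.

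Your route also genuinely differs from the paper's in how the self-coupling is handled. The paper uses the once-iterated representation \eqref{equsolrepinlap1}, inverts the Laplace transform while keeping $a^2H(z,A_1,\alpha_1,\alpha_1-\alpha_2)H(z,A_2,\alpha_2,\alpha_2-\alpha_1)\tilde G_1$ as a time-domain convolution with kernel $O((t-s)^{\alpha_1+\alpha_2-1})$, and then invokes the weakly singular Gr\"{o}nwall inequality of Lemma \ref{lemgrondwall}. You instead close the fixed point in the Laplace domain (Neumann series for the operator $I-a^2HH$, or equivalently absorption of the $O(|z|^{-\alpha_1-\alpha_2})$ term), which is legitimate because that operator is a small perturbation of the identity on $L^2(\Omega)$ for $|z|\ge\kappa$ with $\kappa$ large, and it dispenses with Gr\"{o}nwall entirely. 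The price is that you must know a priori that $\tilde G_i(z)$ is finite and analytic on the sector before absorbing --- which your Neumann-series construction itself supplies --- whereas the paper's route only needs continuity of $t\mapsto\|G_i(t)\|_{L^2(\Omega)}$. Finally, your ``secondary technical point'' about invertibility on $\hat H^{\sigma}(\Omega)$ is not needed for this theorem: Lemma \ref{lemtheestimateofHblablainH} requires $s<1/2$ and is unavailable for general $s_1,s_2$, but the smoothing here is extracted purely from $\|A_i\tilde G_i\|_{L^2(\Omega)}$ together with Theorem \ref{thmregdiri} at $r=0$, so the $L^2$ resolvent bounds suffice.
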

\begin{proof}
    By Eq. \eqref{equsolrepinlap1}, Lemmas \ref{lemtheestimateofHblabla}, \ref{lemestZ} and taking the inverse Laplace transform for \eqref{equsolrepinlap1}, we obtain
    \begin{equation*}
    \begin{aligned}
    \|G_1(t)\|_{L^2(\Omega)}\leq& C\|G_{1,0}\|_{L^2(\Omega)}+Ct^{\alpha_2}\|G_{2,0}\|_{L^2(\Omega)}+\int_0^t(t-s)^{\alpha_1+\alpha_2-1}\|G_1\|_{L^2(\Omega)}ds,\\
    \|G_2(t)\|_{L^2(\Omega)}\leq& Ct^{\alpha_1}\|G_{1,0}\|_{L^2(\Omega)}+C\|G_{2,0}\|_{L^2(\Omega)}+\int_0^t(t-s)^{\alpha_1+\alpha_2-1}\|G_2\|_{L^2(\Omega)}ds.
    \end{aligned}
    \end{equation*}
    According to Lemma \ref{lemgrondwall} and the fact $T/t>1$, one can get the desired $L^2$ estimates. Similarly, acting $A_i$ on both sides of Eq. \eqref{equsolrepinlap1} respectively and using Lemmas \ref{lemtheestimateofHblabla}, \ref{lemestZ}, one can obtain
    \begin{equation*}
    \begin{aligned}
   & \|G_1(t)\|_{\hat{H}^{s_1+\gamma_1}(\Omega)}
    \\
   & \leq Ct^{-\alpha_1}\|G_{1,0}\|_{L^2(\Omega)}+Ct^{\alpha_2-\alpha_1}\|G_{2,0}\|_{L^2(\Omega)}+\int_0^t(t-s)^{\alpha_2-1}\|G_1\|_{L^2(\Omega)}ds,\\
   & \|G_2(t)\|_{\hat{H}^{s_2+\gamma_2}(\Omega)}
    \\
   & \leq Ct^{\alpha_1-\alpha_2}\|G_{1,0}\|_{L^2(\Omega)}+Ct^{-\alpha_2}\|G_{2,0}\|_{L^2(\Omega)}+\int_0^t(t-s)^{\alpha_1-1}\|G_2\|_{L^2(\Omega)}ds.
    \end{aligned}
    \end{equation*}
     In view of the $L^2$ estimates of $G_1$ and $G_2$ proved above and $T/t>1$, the desired estimates can be got.
\end{proof}

Lastly, we provide a detailed discussion on the regularity of the solutions when $s_1<1/2$.

\begin{theorem}\label{thmhomoregularitysmooth1}
	  Assume $s_1\leq s_2<1/2$. If $G_{1,0},\ G_{2,0}\in \hat{H}^\sigma(\Omega)$, $\sigma<1/2$,  then we have
	 \begin{equation*}
	 \begin{aligned}
	 &\|G_1(t)\|_{\hat{H}^{s_1+\gamma_1}(\Omega)}\leq Ct^{-\alpha_1}\|G_{1,0}\|_{\hat{H}^\sigma(\Omega)}+Ct^{\min(0,\alpha_2-\alpha_1)}\|G_{2,0}\|_{\hat{H}^\sigma(\Omega)},\\
	 &\|G_2(t)\|_{\hat{H}^{s_2+\gamma_2}(\Omega)}\leq Ct^{\min(0,\alpha_1-\alpha_2)}\|G_{1,0}\|_{\hat{H}^\sigma(\Omega)}+Ct^{-\alpha_2}\|G_{2,0}\|_{\hat{H}^\sigma(\Omega)},
	 \end{aligned}
	 \end{equation*}
where $\gamma_i=\min(1/2-\epsilon,s_i+\sigma)$ $(i=1,2)$.
\end{theorem}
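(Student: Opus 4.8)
The plan is to run the proof of Theorem~\ref{thmhomoregularitynonsmooth} again, but carrying every estimate in $\hat{H}^{\sigma}(\Omega)$ rather than $L^2(\Omega)$; the extra smoothness of the data is then converted into extra spatial regularity of the solution via the interpolated resolvent bounds of Lemma~\ref{lemtheestimateofHblablainH}. Since $\sigma<1/2<1/2+s_i$, that lemma gives, on $\Sigma_{\theta,\kappa}$, $\|H(z,A_i,\alpha_i,\beta)\|_{\hat{H}^{\sigma}(\Omega)\to\hat{H}^{\sigma}(\Omega)}\le C|z|^{\beta-\alpha_i}$. Substituting these into the fixed-point representation~\eqref{equsolrepinlap1}, taking the inverse Laplace transform along $\Gamma_{\theta,\kappa}$ and bounding the contour integrals with Lemma~\ref{lemestZ} exactly as before, I would first obtain the base estimate: the coupled inequality
\begin{equation*}
\|G_1(t)\|_{\hat{H}^{\sigma}(\Omega)}\le C\|G_{1,0}\|_{\hat{H}^{\sigma}(\Omega)}+C\|G_{2,0}\|_{\hat{H}^{\sigma}(\Omega)}+\int_0^t(t-s)^{\alpha_1+\alpha_2-1}\|G_1(s)\|_{\hat{H}^{\sigma}(\Omega)}\,ds
\end{equation*}
and its symmetric counterpart for $G_2$, whence Lemma~\ref{lemgrondwall} (with $\beta=\alpha_1+\alpha_2>0$) yields $\|G_i(t)\|_{\hat{H}^{\sigma}(\Omega)}\le C\|G_{1,0}\|_{\hat{H}^{\sigma}(\Omega)}+C\|G_{2,0}\|_{\hat{H}^{\sigma}(\Omega)}$, $i=1,2$.

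Next I would upgrade this to the claimed $\hat{H}^{s_i+\gamma_i}(\Omega)$ bound, treating $i=1$ (the case $i=2$ follows by interchanging indices). Set $\tilde{\sigma}_1:=\min(\sigma,\,1/2-\epsilon-s_1)$; for $\epsilon$ small this lies in $[0,1/2)$ and satisfies $\min(s_1+\tilde{\sigma}_1,\,1/2-\epsilon)=\gamma_1$, so the elliptic regularity Theorem~\ref{thmregdiri}, applied with $g=A_1G_1(t)\in H^{\tilde{\sigma}_1}(\Omega)$, gives $\|G_1(t)\|_{\hat{H}^{s_1+\gamma_1}(\Omega)}\le C\|A_1G_1(t)\|_{\hat{H}^{\tilde{\sigma}_1}(\Omega)}$, reducing the task to estimating $A_1G_1$ in $\hat{H}^{\tilde{\sigma}_1}(\Omega)$. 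Acting with $A_1$ on~\eqref{equsolrepinlap1} for $\tilde{G}_1$ and using the interpolated bound $\|A_1H(z,A_1,\alpha_1,\beta)\|_{\hat{H}^{\sigma}(\Omega)\to\hat{H}^{\tilde{\sigma}_1}(\Omega)}\le C|z|^{\beta-\mu\alpha_1}$ of Lemma~\ref{lemtheestimateofHblablainH} with $\mu:=(\sigma-\tilde{\sigma}_1)/(2s_1)$, together with the ordinary $\hat{H}^{\sigma}$-bound for the other factor, produces the $z$-estimates $C|z|^{\alpha_1(1-\mu)-1}$, $C|z|^{(\alpha_1-\alpha_2)-\mu\alpha_1-1}$ and $C|z|^{-\alpha_2-\mu\alpha_1}$ for the term carrying $G_{1,0}$, the term carrying $G_{2,0}$, and the term multiplying $\tilde{G}_1$, respectively. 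Inverting the Laplace transform and invoking Lemma~\ref{lemestZ}, the first two are bounded by $Ct^{-\alpha_1(1-\mu)}\|G_{1,0}\|_{\hat{H}^{\sigma}(\Omega)}$ and $Ct^{\alpha_2-\alpha_1(1-\mu)}\|G_{2,0}\|_{\hat{H}^{\sigma}(\Omega)}$, while the third yields the weakly singular convolution $\int_0^t(t-s)^{\alpha_2+\mu\alpha_1-1}\|G_1(s)\|_{\hat{H}^{\sigma}(\Omega)}\,ds$. Using $T/t>1$ and $\mu\in[0,1]$, the first two are dominated by $Ct^{-\alpha_1}\|G_{1,0}\|_{\hat{H}^{\sigma}(\Omega)}$ and $Ct^{\min(0,\alpha_2-\alpha_1)}\|G_{2,0}\|_{\hat{H}^{\sigma}(\Omega)}$; and since $\alpha_2+\mu\alpha_1>0$, the base estimate from the first step bounds the convolution by a constant multiple of $\|G_{1,0}\|_{\hat{H}^{\sigma}(\Omega)}+\|G_{2,0}\|_{\hat{H}^{\sigma}(\Omega)}$, which is absorbed into the same right-hand side (again by $T/t>1$).

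I expect the main difficulty to be bookkeeping rather than a new analytic idea: one must pick $\tilde{\sigma}_1$ so that Theorem~\ref{thmregdiri} returns exactly the exponent $\gamma_1=\min(1/2-\epsilon,\,s_1+\sigma)$, and then check that the interpolation parameter $\mu=(\sigma-\tilde{\sigma}_1)/(2s_1)$ really lies in $[0,1]$ — this forces $\sigma\le 1/2-\epsilon+s_1$, which is precisely where the hypothesis $\sigma<1/2$ and the freedom to shrink $\epsilon$ enter. A secondary point, easy to overlook, is that the self-coupling term in~\eqref{equsolrepinlap1} does \emph{not} require a second Gr\"{o}nwall argument at the higher regularity level: its kernel $(t-s)^{\alpha_2+\mu\alpha_1-1}$ is integrable, so the term is already controlled by the $\hat{H}^{\sigma}(\Omega)$ a priori bound of the first step.
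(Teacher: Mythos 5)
Your proposal is correct and is essentially the argument the paper intends: the paper only remarks that the proof is "similar to Theorem \ref{thmhomoregularitynonsmooth}", and your write-up carries out exactly that plan — rerun the fixed-point representation \eqref{equsolrepinlap1} in $\hat{H}^{\sigma}(\Omega)$ using the resolvent bounds of Lemma \ref{lemtheestimateofHblablainH}, establish the base $\hat{H}^{\sigma}$ bound by Gr\"{o}nwall, then upgrade via Theorem \ref{thmregdiri} and the interpolated bound $\|AH\|_{\hat{H}^{\tilde{\sigma}+2\mu s}\to\hat{H}^{\tilde{\sigma}}}\le C|z|^{\beta-\mu\alpha}$, which is precisely the mechanism the paper uses in the proof of Theorem \ref{thmhomoregularitysmooth2}. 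Your bookkeeping of $\tilde{\sigma}_1$, $\mu$, and the resulting exponents is consistent with the stated $\gamma_i=\min(1/2-\epsilon,s_i+\sigma)$, so no gap.
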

\begin{remark}
The proof of Theorem \ref{thmhomoregularitysmooth1} is similar to the one of Theorem \ref{thmhomoregularitynonsmooth}.
\end{remark}
\begin{theorem}\label{thmhomoregularitysmooth2}
	 Assume $s_1\leq s_2<1/2$, $G_{i,0}\in \hat{H}^{\sigma_i}(\Omega)$, and $\sigma_i< 1/2$ $(i=1,2)$. Denote $\mu_1=\max(\frac{\alpha_1-\alpha_2}{\alpha_1}+\epsilon,0)$, $\mu_2=\max(\frac{\alpha_2-\alpha_1}{\alpha_2}+\epsilon,0)$, and $\bar{\gamma}_i=\min(1/2-\epsilon,s_i+\sigma_i)$ $(i=1,2)$.
\begin{itemize}
\item If $\sigma_1+2\mu_1s_1<s_2+\bar{\gamma}_2$ and $\sigma_2+2\mu_2s_2<s_1+\bar{\gamma}_1$, then we have
\begin{equation*}
	 \begin{aligned}
	 &\|G_1(t)\|_{\hat{H}^{s_1+\gamma_1}(\Omega)}\leq Ct^{-\alpha_1}\|G_{1,0}\|_{\hat{H}^{\sigma_1}(\Omega)}+C\|G_{2,0}\|_{\hat{H}^{\sigma_2}(\Omega)},\\
	 &\|G_2(t)\|_{\hat{H}^{s_2+\gamma_2}(\Omega)}\leq C\|G_{1,0}\|_{\hat{H}^{\sigma_1}(\Omega)}+Ct^{-\alpha_2}\|G_{2,0}\|_{\hat{H}^{\sigma_2}(\Omega)};
	 \end{aligned}
\end{equation*}
\item Assume $\sigma_1>\sigma_2$. If $\sigma_1+2\mu_1s_1>s_2+\bar{\gamma}_2$ or $\sigma_2+2\mu_2s_2>s_1+\bar{\gamma}_1$, then we get
\begin{equation*}
	 \begin{aligned}
	 &\|G_1(t)\|_{\hat{H}^{s_1+\gamma_1}(\Omega)}\leq Ct^{-\alpha_1}\|G_{1,0}\|_{\hat{H}^{\sigma_1}(\Omega)}+C\|G_{2,0}\|_{\hat{H}^{\sigma_2}(\Omega)},\\
	 &\|G_2(t)\|_{\hat{H}^{s_2+\bar{\gamma}_2}(\Omega)}\leq Ct^{\min(0,\alpha_1-\alpha_2)}\|G_{1,0}\|_{\hat{H}^{\sigma_1}(\Omega)}+Ct^{-\alpha_2}\|G_{2,0}\|_{\hat{H}^{\sigma_2}(\Omega)};
	 \end{aligned}
\end{equation*}
\item Assume $\sigma_1<\sigma_2$. If $\sigma_1+2\mu_1s_1>s_2+\bar{\gamma}_2$ or $\sigma_2+2\mu_2s_2>s_1+\bar{\gamma}_1$, then we obtain
\begin{equation*}
	 \begin{aligned}
	 &\|G_1(t)\|_{\hat{H}^{s_1+\bar{\gamma}_1}(\Omega)}\leq Ct^{-\alpha_1}\|G_{1,0}\|_{\hat{H}^{\sigma_1}(\Omega)}+Ct^{\min(0,\alpha_2-\alpha_1)}\|G_{2,0}\|_{\hat{H}^{\sigma_2}(\Omega)},\\
	 &\|G_2(t)\|_{\hat{H}^{s_2+\gamma_2}(\Omega)}\leq C\|G_{1,0}\|_{\hat{H}^{\sigma_1}(\Omega)}+Ct^{-\alpha_2}\|G_{2,0}\|_{\hat{H}^{\sigma_2}(\Omega)}.
	 \end{aligned}
\end{equation*}
\end{itemize}
 Here $\gamma_1=\min(1/2-\epsilon,s_1+\sigma_1,s_1+s_2+\bar{\gamma}_2-2\mu_1s_1)$ and $\gamma_2=\min(1/2-\epsilon,s_2+\sigma_2,s_2+s_1+\bar{\gamma}_1-2\mu_2s_2)$.
 \end{theorem}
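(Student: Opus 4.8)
The plan is to run a resolvent/Bromwich-contour argument on the Laplace-domain representation \eqref{equsolrepinlap1}, in the spirit of the proof of Theorem \ref{thmhomoregularitynonsmooth}, but tracking fractional Sobolev indices carefully and using the sharpened mapping bounds of Lemma \ref{lemtheestimateofHblablainH}. Write the first line of \eqref{equsolrepinlap1} in the self-referential form
\[
\tilde G_1=\tilde F_1+a^2\,H(z,A_1,\alpha_1,\alpha_1-\alpha_2)\,H(z,A_2,\alpha_2,\alpha_2-\alpha_1)\,\tilde G_1,
\]
with $\tilde F_1=H(z,A_1,\alpha_1,\alpha_1-1)G_{1,0}+a\,H(z,A_1,\alpha_1,\alpha_1-\alpha_2)\,H(z,A_2,\alpha_2,\alpha_2-1)G_{2,0}$, and symmetrically for $\tilde G_2$. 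Inverting the Laplace transform along $\Gamma_{\theta,\kappa}$ turns the self term into a Volterra convolution, giving $G_1(t)=\frac{1}{2\pi\mathbf{i}}\int_{\Gamma_{\theta,\kappa}}e^{zt}\tilde F_1(z)\,dz+a^2\int_0^t K_1(t-\tau)G_1(\tau)\,d\tau$ with $K_1(t)=\frac{1}{2\pi\mathbf{i}}\int_{\Gamma_{\theta,\kappa}}e^{zt}H(z,A_1,\alpha_1,\alpha_1-\alpha_2)H(z,A_2,\alpha_2,\alpha_2-\alpha_1)\,dz$, and likewise for $G_2$. The steps are: (i) bound the forcing $\tilde F_1$ in the target space $\hat H^{s_1+\gamma_1}(\Omega)$ as a function of $|z|$; (ii) bound the kernel $K_1$ in the relevant operator norm; (iii) use Lemma \ref{lemestZ} to convert $|z|$-powers into $t$-powers; (iv) feed the lower-order a priori estimates of $G_1$ (obtained exactly as in Theorems \ref{thmhomoregularitynonsmooth}--\ref{thmhomoregularitysmooth1}) into the convolution and close with the generalized Gr\"onwall inequality, Lemma \ref{lemgrondwall}. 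The estimates for $G_2$ are the mirror image.

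For the building blocks, I would combine the elliptic regularity of Theorem \ref{thmregdiri} applied to $(z^{\alpha_i}+a+A_i)^{-1}$ with Lemmas \ref{lemtheestimateofHblabla}--\ref{lemtheestimateofHblablainH} (available since $s_1\le s_2<1/2$, so $\hat H^{s}=H^s$ and the resolvent bounds of \cite{Acosta20174} apply). For any admissible chain of indices this produces estimates of the form $\|H(z,A_i,\alpha_i,\beta)\|_{\hat H^{\tau}\to\hat H^{\tau'}}\le C|z|^{p}$, where the output index satisfies the ceiling $\tau'<1/2+s_i$ forced by Theorem \ref{thmregdiri}, and where the exponent $p$ is \emph{decreased} by $\mu\alpha_i$ whenever $2\mu s_i$ units of output regularity are sacrificed via the refined estimate of Lemma \ref{lemtheestimateofHblablainH} ($\mu\in[0,1]$). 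The subtle actor is the coupling factor $H(z,A_1,\alpha_1,\alpha_1-\alpha_2)$: its intrinsic exponent $\alpha_1-\alpha_2$ is positive exactly when $\alpha_1>\alpha_2$, and to keep every resulting $t$-exponent above $-1$ — so that the forcing stays locally integrable and Lemma \ref{lemgrondwall} applies — one must spend at least $\mu_1=\max(\frac{\alpha_1-\alpha_2}{\alpha_1}+\epsilon,0)$ of the $A_1$-smoothing budget, and symmetrically $\mu_2$ in the $\tilde G_2$ equation. This is exactly where $\mu_1,\mu_2$ enter, and the same trade-off reappears when one has to bootstrap the self-kernel $K_1$ up to the top norm $\hat H^{s_1+\gamma_1}$ starting from the already-known regularity of $G_1$.

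With these bounds in hand, in $\tilde F_1$ the $G_{1,0}$-part, after paying the $A_1$-smoothing needed to reach $\hat H^{s_1+\bar\gamma_1}$, carries the exponent $\alpha_1-1$ in $|z|$ and hence contributes $Ct^{-\alpha_1}\|G_{1,0}\|_{\hat H^{\sigma_1}}$; the $G_{2,0}$-part is first processed by $H(z,A_2,\alpha_2,\alpha_2-1)$, which delivers a function in $\hat H^{s_2+\bar\gamma_2}$, and then by $H(z,A_1,\alpha_1,\alpha_1-\alpha_2)$, at which point the case distinction of the statement arises. If $\sigma_1+2\mu_1s_1<s_2+\bar\gamma_2$ (and the symmetric inequality for the second state), then the second resolvent can be fed its argument at a regularity high enough that, after the $2\mu_1s_1$ tax, it still produces $\hat H^{s_1+\bar\gamma_1}$ — so $\gamma_1=\bar\gamma_1$ — with a nonnegative $t$-exponent, i.e.\ the bounded term $C\|G_{2,0}\|_{\hat H^{\sigma_2}}$; this is the first bullet. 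If instead $\sigma_1+2\mu_1s_1>s_2+\bar\gamma_2$ or $\sigma_2+2\mu_2s_2>s_1+\bar\gamma_1$, the output regularity gets capped at $s_1+s_2+\bar\gamma_2-2\mu_1s_1$ (respectively $s_2+s_1+\bar\gamma_1-2\mu_2s_2$), which is exactly the extra entry in the definition of $\gamma_1$ (resp.\ $\gamma_2$); the loss is distributed asymmetrically: the state with the \emph{larger} initial smoothness keeps time-bounded coefficients but absorbs the regularity cap, while the other one keeps the full index $s_j+\bar\gamma_j$ at the price of a $t^{\min(0,\alpha_i-\alpha_j)}$ factor. The two possibilities $\sigma_1>\sigma_2$ and $\sigma_1<\sigma_2$ give the second and third bullets. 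The kernel $K_1$, treated the same way but with both resolvents acting on the target space, has $|z|$-exponent $-\alpha_1-\alpha_2$, so $\|K_1(t)\|_{\hat H^{s_1+\gamma_1}\to\hat H^{s_1+\gamma_1}}\le Ct^{\alpha_1+\alpha_2-1}$, a weakly singular Volterra kernel; inserting the lower-order bound for $G_1$ and applying Lemma \ref{lemgrondwall} (with $\beta=\alpha_1+\alpha_2$ and the $\alpha_k$-parameters read off from step (iii)) absorbs the self term and yields the stated bound for $G_1$; that for $G_2$ follows by symmetry.

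The main obstacle is the bookkeeping in steps (i)--(ii): for each term one has to pick the chain of intermediate fractional Sobolev exponents and the smoothing amounts $\mu_i$ so that three constraints are met simultaneously — the target index $s_i+\gamma_i$ is genuinely attained under the $1/2+s_i$ ceiling of Theorem \ref{thmregdiri}; the accumulated $|z|$-power converts, through Lemma \ref{lemestZ}, into precisely the claimed $t$-power and stays above $-1$; and the Volterra kernels stay weakly singular so Lemma \ref{lemgrondwall} closes the argument. Matching the three regimes of the statement to the three ways the relations $\sigma_1+2\mu_1s_1\lessgtr s_2+\bar\gamma_2$, $\sigma_2+2\mu_2s_2\lessgtr s_1+\bar\gamma_1$, and $\sigma_1\lessgtr\sigma_2$ can combine, and checking that the degraded $t^{\min(0,\alpha_i-\alpha_j)}$ factors land on the right state, is the genuinely delicate part; the remainder is a careful but routine bootstrap.
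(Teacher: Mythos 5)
Your plan is sound in its essential mechanics and uses exactly the paper's toolbox (Lemmas \ref{lemtheestimateofHblablainH}, \ref{lemestZ}, \ref{lemgrondwall}, Theorem \ref{thmregdiri}, contour inversion along $\Gamma_{\theta,\kappa}$), and you correctly identify the two structural features of the statement: the smoothing tax $\mu_i$ needed to keep the cross-coupling exponents integrable when $\alpha_1\neq\alpha_2$, and the origin of the extra entry $s_1+s_2+\bar\gamma_2-2\mu_1 s_1$ in $\gamma_1$ as a regularity cap on what the coupling resolvent can deliver. However, your decomposition differs from the paper's. You work from the self-referential representation \eqref{equsolrepinlap1}, so your Volterra term is a \emph{self}-convolution $\int_0^t K_1(t-\tau)G_1(\tau)\,d\tau$ with $\|K_1(t)\|_{\hat H^{s_1+\gamma_1}\to\hat H^{s_1+\gamma_1}}\le Ct^{\alpha_1+\alpha_2-1}$, and all of the case analysis is pushed into the forcing term $\tilde F_1$, whose $G_{2,0}$-part is a composition $H(z,A_1,\alpha_1,\alpha_1-\alpha_2)H(z,A_2,\alpha_2,\alpha_2-1)G_{2,0}$ of resolvents of two \emph{different} fractional Laplacians. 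The paper instead acts $A_i$ on the singly-coupled representation \eqref{equsolrepinlap0}, so the convolution term for $G_1$ involves $\|G_2(s)\|_{\hat H^{\sigma_1+2\mu_1 s_1}}$ (a cross-coupling in a norm controlled by the hypothesis $\sigma_1+2\mu_1 s_1<s_2+\bar\gamma_2$), and in the second and third bullets it dispenses with Gr\"onwall entirely: it simply substitutes the already-proved bound of Theorem \ref{thmhomoregularitysmooth1} for the rougher component into the single cross-convolution, which is also the cleanest way to see why the degraded factor $t^{\min(0,\alpha_i-\alpha_j)}$ lands on the component with the smaller $\sigma$. The price of your route is that the two-resolvent forcing must be estimated entirely inside the Laplace domain, i.e.\ you must interpolate between the spatial regularity and the $|z|$-decay of the intermediate quantity $H(z,A_2,\alpha_2,\alpha_2-1)G_{2,0}$ to verify simultaneously that the output reaches $\hat H^{s_1+\gamma_1}$ and that the accumulated $|z|$-power inverts to the claimed (bounded or $t^{\min(0,\cdot)}$) coefficient; this is exactly the information the paper extracts once and for all in the time domain via Theorem \ref{thmhomoregularitysmooth1}, and it is the one step your sketch asserts rather than carries out. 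With that bookkeeping done (the convolution theorem guarantees the two routes produce the same final $t$-powers), your argument closes the same way the paper's does.
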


\begin{proof}
 When $\sigma_1+2\mu_1s_1\leq s_2+\bar{\gamma}_2$ and $\sigma_2+2\mu_2s_2\leq s_1+\bar{\gamma}_1$, acting $A_i$ on both sides of \eqref{equsolrepinlap0} respectively, according to Theorem \ref{thmregdiri}, embedding theorem, Lemmas \ref{lemtheestimateofHblablainH}, \ref{lemestZ} and taking the inverse Laplace transform of \eqref{equsolrepinlap0}, there are
\begin{equation*}
\begin{aligned}
& \|G_1(t)\|_{\hat{H}^{s_1+\gamma_1}(\Omega)}
\\
& \leq Ct^{-\alpha_1}\|G_{1,0}\|_{\hat{H}^{\sigma_1}(\Omega)}+\int_0^t(t-s)^{(\mu_1-1)\alpha_1+\alpha_2}\|G_2\|_{\hat{H}^{\sigma_1+2\mu_1s_1}(\Omega)}ds,\\
& \|G_2(t)\|_{\hat{H}^{s_2+\gamma_2}(\Omega)}
\\ &
\leq Ct^{-\alpha_2}\|G_{2,0}\|_{\hat{H}^{\sigma_2}(\Omega)}+\int_0^t(t-s)^{(\mu_2-1)\alpha_2+\alpha_1}\|G_1\|_{\hat{H}^{\sigma_2+2\mu_2s_2}(\Omega)}ds,
\end{aligned}
\end{equation*}
where $\gamma_i=\min(1/2-\epsilon,s_i+\sigma_i)$ $(i=1,2)$. Thus by Lemma \ref{lemgrondwall}, embedding theorem \cite{Adams2003} and the fact $T/t>1$, the desired estimates can be got.

If $\sigma_1+2\mu_1s_1> s_2+\bar{\gamma}_2$ or $\sigma_2+2\mu_2s_2> s_1+\bar{\gamma}_1$, consider $\sigma_1>\sigma_2$ first. According to Theorem \ref{thmhomoregularitysmooth1}, there exists
\begin{equation*}
\|G_2(t)\|_{\hat{H}^{s_2+\bar{\gamma}_2}(\Omega)}\leq Ct^{\alpha_1-\alpha_2}\|G_{1,0}\|_{\hat{H}^{\sigma_1}(\Omega)}+Ct^{-\alpha_2}\|G_{2,0}\|_{\hat{H}^{\sigma_2}(\Omega)}.
\end{equation*}
Thus
\begin{equation*}
\|G_1(t)\|_{\hat{H}^{s_1+\gamma_1}(\Omega)}\leq Ct^{-\alpha_1}\|G_{1,0}\|_{\hat{H}^{\sigma_1}(\Omega)}+\int_0^t(t-s)^{(\mu_1-1)\alpha_1+\alpha_2}\|G_2\|_{\hat{H}^{s_2+\bar{\gamma}_2}(\Omega)}ds,
\end{equation*}
where $\gamma_1=\min(1/2-\epsilon,s_1+s_2+\bar{\gamma}_2-2\mu_1s_1,s_1+\sigma_1)$. Similarly for $\sigma_1<\sigma_2$, one has
\begin{equation*}
\|G_1(t)\|_{\hat{H}^{s_1+\bar{\gamma}_1}(\Omega)}\leq Ct^{\alpha_2-\alpha_1}\|G_{2,0}\|_{\hat{H}^{\sigma_2}(\Omega)}+Ct^{-\alpha_1}\|G_{1,0}\|_{\hat{H}^{\sigma_1}(\Omega)}.
\end{equation*}
So
\begin{equation*}
\|G_2(t)\|_{\hat{H}^{s_2+\gamma_2}(\Omega)}\leq Ct^{-\alpha_2}\|G_{2,0}\|_{\hat{H}^{\sigma_2}(\Omega)}+\int_0^t(t-s)^{(\mu_2-1)\alpha_2+\alpha_1}\|G_1\|_{\hat{H}^{s_1+\bar{\gamma}_1}(\Omega)}ds,
\end{equation*}
where $\gamma_2=\min(1/2-\epsilon,s_2+s_1+\bar{\gamma}_1-2\mu_2s_2,s_2+\sigma_2)$. Thus by embedding theorem \cite{Adams2003} and the fact $T/t>1$, the desired estimates are obtained.
\end{proof}

\begin{theorem}\label{thmhomoregularitysmooth3}
	 Assume $s_1<1/2\leq s_2$. If $G_{1,0}\in \hat{H}^{\sigma_i}(\Omega)$, $\sigma_1< 1/2-s_1$ and $\sigma_2=0$, then
\begin{equation*}
	 \begin{aligned}
	 &\|G_1(t)\|_{\hat{H}^{s_1+\gamma_1}(\Omega)}\leq Ct^{-\alpha_1}\|G_{1,0}\|_{\hat{H}^{\sigma_1}(\Omega)}+C\|G_{2,0}\|_{\hat{H}^{\sigma_2}(\Omega)},\\
    &\|G_2(t)\|_{\hat{H}^{s_2+\gamma_2}(\Omega)}\leq Ct^{\min(0,\alpha_1-\alpha_2)}\|G_{1,0}\|_{\hat{H}^{\sigma_1}(\Omega)}+Ct^{-\alpha_2}\|G_{2,0}\|_{\hat{H}^{\sigma_2}(\Omega)},
	 \end{aligned}
	 \end{equation*}
 where $\gamma_i=\min(1/2-\epsilon,s_i+\sigma_i)$ $(i=1,2)$.
 \end{theorem}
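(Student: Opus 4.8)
The plan is to establish the two estimates by a decoupled two--step bootstrap, treating $G_2$ first and then $G_1$. The ordering is forced: $s_2\ge 1/2$ makes the fractional--Sobolev resolvent bounds of Lemma~\ref{lemtheestimateofHblablainH} unavailable for $A_2$, so every bit of spatial smoothing for $G_2$ must be manufactured from the $L^2$ resolvent bounds of Lemma~\ref{lemtheestimateofHblabla} composed with the elliptic regularity of Theorem~\ref{thmregdiri}, and this works only because the $L^2$ a priori bound of Theorem~\ref{thmhomoregularitynonsmooth} is already available --- which is exactly why $\sigma_2=0$ is assumed. Since $s_1<1/2$, once $G_2$ is under control the component $G_1$ is handled by the method already used for Theorems~\ref{thmhomoregularitynonsmooth} and \ref{thmhomoregularitysmooth1}, now with Lemma~\ref{lemtheestimateofHblablainH} applied to $A_1$.

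\emph{Step 1 (the bound for $G_2$).} I would apply $A_2$ to the substituted second line of \eqref{equsolrepinlap1} --- this form is forced, since the non--substituted representation \eqref{equsolrepinlap0} yields a self--coupling kernel $(t-s)^{\alpha_1-\alpha_2-1}$ that need not be integrable --- and bound $\|A_2\tilde G_2(z)\|_{L^2(\Omega)}$ termwise on $\Gamma_{\theta,\kappa}$ via Lemma~\ref{lemtheestimateofHblabla}: the $G_{2,0}$-term contributes $C|z|^{\alpha_2-1}$, the $G_{1,0}$-term $C|z|^{\alpha_2-\alpha_1-1}$ (two $L^2$ resolvent factors composed), and the self--coupling term $C|z|^{-\alpha_1}\|\tilde G_2(z)\|_{L^2}$. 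Inverting the Laplace transform with Lemma~\ref{lemestZ}, the first two give $Ct^{-\alpha_2}\|G_{2,0}\|_{L^2}$ and $Ct^{\alpha_1-\alpha_2}\|G_{1,0}\|_{L^2}\le Ct^{\min(0,\alpha_1-\alpha_2)}\|G_{1,0}\|_{L^2}$ for $t\in(0,T]$, while the third becomes $C\int_0^t(t-s)^{\alpha_1-1}\|G_2(s)\|_{L^2}\,ds$, which by the $L^2$ bound of Theorem~\ref{thmhomoregularitynonsmooth} is at most $CT^{\alpha_1}(\|G_{1,0}\|_{L^2}+\|G_{2,0}\|_{L^2})$. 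Thus $\|A_2G_2(t)\|_{L^2(\Omega)}$ obeys the desired bound, and Theorem~\ref{thmregdiri} with $g=A_2G_2$ and $r=0$ (so $\gamma=\min(s_2,1/2-\epsilon)=1/2-\epsilon=\gamma_2$, as $s_2\ge 1/2$) upgrades it to the claimed estimate for $\|G_2(t)\|_{\hat H^{s_2+\gamma_2}(\Omega)}$.

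\emph{Step 2 (the bound for $G_1$).} Now I would apply $A_1$ to the first line of \eqref{equsolrepinlap0}, i.e.\ to the representation coupling $G_1$ directly to $\tilde G_2$ (the substituted form is useless here, lacking a fractional--Sobolev resolvent estimate for $A_2$). Since $s_1<1/2$ and $\sigma_1<1/2-s_1$, Lemma~\ref{lemtheestimateofHblablainH} gives $\|A_1H(z,A_1,\alpha_1,\alpha_1-1)\|_{\hat H^{\sigma_1}\to\hat H^{\sigma_1}}\le C|z|^{\alpha_1-1}$, producing the term $Ct^{-\alpha_1}\|G_{1,0}\|_{\hat H^{\sigma_1}(\Omega)}$; and, by its third estimate with $\mu=1$, $\tilde\sigma=\sigma_1$, $\|A_1H(z,A_1,\alpha_1,\alpha_1-\alpha_2)\|_{\hat H^{\sigma_1+2s_1}(\Omega)\to\hat H^{\sigma_1}(\Omega)}\le C|z|^{-\alpha_2}$. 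Using the embedding $\hat H^{s_2+\gamma_2}(\Omega)\hookrightarrow\hat H^{\sigma_1+2s_1}(\Omega)$, valid because $\sigma_1+2s_1<1/2+s_1\le s_2+1/2-\epsilon$ once $\epsilon<s_2-s_1$ (permitted since $s_1<1/2\le s_2$), together with Step~1, the remaining term is bounded by $C\int_0^t(t-s)^{\alpha_2-1}\|G_2(s)\|_{\hat H^{s_2+\gamma_2}(\Omega)}\,ds$ and hence by $C\int_0^t(t-s)^{\alpha_2-1}(s^{-\alpha_2}\|G_{2,0}\|_{L^2}+s^{\min(0,\alpha_1-\alpha_2)}\|G_{1,0}\|_{\hat H^{\sigma_1}})\,ds$. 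A Beta--function computation shows the first integral is a ($t$-independent) constant and the second is $\le Ct^{\min(\alpha_1,\alpha_2)}$, so this contribution is non--singular. Finally, Theorem~\ref{thmregdiri} with $g=A_1G_1$ and $r=\sigma_1$ (so $\gamma=\min(s_1+\sigma_1,1/2-\epsilon)=s_1+\sigma_1=\gamma_1$ for $\epsilon$ small) turns the $\hat H^{\sigma_1}$-bound on $A_1G_1$ into the asserted bound on $\|G_1(t)\|_{\hat H^{s_1+\gamma_1}(\Omega)}$, the bounded terms being absorbed into $Ct^{-\alpha_1}\|G_{1,0}\|_{\hat H^{\sigma_1}(\Omega)}$.

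\emph{The main difficulty} is the absence of any smoothing estimate for $(z^{\alpha_2}+a+A_2)^{-1}$ in fractional Sobolev spaces when $s_2\ge 1/2$; this dictates the ordering, the mechanism (all regularity for $G_2$ extracted from the $L^2$ resolvent bound plus Theorem~\ref{thmregdiri}), and the hypotheses ($\sigma_2=0$ to feed the $L^2$ a priori estimate into the self--coupling convolution, and $\sigma_1<1/2-s_1$ so that $\gamma_1=s_1+\sigma_1$ and $\sigma_1+2s_1$ stays below $s_2+\gamma_2$). The remaining work --- the $\epsilon$-bookkeeping of Sobolev indices and the Beta--function estimates of the cross--terms --- is routine and parallels the proofs above.
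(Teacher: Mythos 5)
Your proposal is correct and follows exactly the route the paper intends: the paper's own ``proof'' of this theorem is a one-line remark instructing the reader to combine the nonsmooth-data argument of Theorem~\ref{thmhomoregularitynonsmooth} (which is what you do for $G_2$, via the substituted representation \eqref{equsolrepinlap1}, the $L^2$ resolvent bounds of Lemma~\ref{lemtheestimateofHblabla}, and Theorem~\ref{thmregdiri}) with the smooth-data argument of Theorems~\ref{thmhomoregularitysmooth1}--\ref{thmhomoregularitysmooth2} (which is what you do for $G_1$, via \eqref{equsolrepinlap0} and Lemma~\ref{lemtheestimateofHblablainH} with $\mu=1$). Your write-up supplies the details the paper omits, and the index bookkeeping ($\gamma_2=1/2-\epsilon$ from $s_2\ge 1/2$, $\gamma_1=s_1+\sigma_1$ from $\sigma_1<1/2-s_1$, and the embedding $\hat H^{s_2+\gamma_2}\hookrightarrow\hat H^{\sigma_1+2s_1}$) checks out.
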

\begin{remark}
Combining the proofs of Theorems \ref{thmhomoregularitynonsmooth} and \ref{thmhomoregularitysmooth2}, Theorem \ref{thmhomoregularitysmooth3} can be obtained.
\end{remark}

\begin{remark}
For Eq. \eqref{tramod}, one can obtain that $\|G\|_{\hat{H}^{s+1/2-\epsilon}(\Omega)}\leq Ct^{-\alpha}\|G_0\|_{L^2(\Omega)}$ for $s\in[1/2,1)$ and $\|G\|_{\hat{H}^{s+\gamma}(\Omega)}\leq Ct^{-\alpha}\|G_0\|_{\hat{H}^{\sigma}(\Omega)}$  for $s\in(0,1/2)$, where $\gamma=\min(1/2-\epsilon,s+\sigma)$, $\sigma>0$.
\end{remark}

\section{Space discretization and error analysis} \label{Sec2}
In this section, we discretize the fractional Laplacian by the finite element method and provide the error estimates for the space semidiscrete scheme of system \eqref{equrqtosol}. Let $\mathcal{T}_h$ be a shape regular quasi-uniform partitions of the domain $\Omega$, where $h$ is the maximum diameter. Denote $ X_h $ as the piecewise linear finite element space
\begin{equation*}
X_{h}=\{v_h\in C(\bar{\Omega}): v_h|_\mathbf{T}\in \mathcal{P}^1,\  \forall \mathbf{T}\in\mathcal{T}_h,\ v_h|_{\partial \Omega}=0\},
\end{equation*}
where $\mathcal{P}^1$ denotes the set of piecewise polynomials of degree $1$ over $\mathcal{T}_h$. Then we define the $ L^2 $-orthogonal projection $ P_h:\ L^2(\Omega)\rightarrow X_h $ by
\begin{equation*}
\begin{aligned}
&(P_hu,v_h)=(u,v_h) \ ~\forall v_h\in X_h,
\end{aligned}
\end{equation*}
which has the the following approximation property.
\begin{lemma}[\cite{Bazhlekova2015}]\label{lemprojection}
    The projection $ P_h $ satisfies
    \begin{equation*}
    \begin{aligned}
    &\|P_hu-u\|_{L^2(\Omega)}+h\|\nabla(P_hu-u)\|_{L^2(\Omega)}\leq Ch^q\|u\|_{H^{q}(\Omega)}\ ~~for\ u\in H^q(\Omega),\,\ q=1,2.\\
    \end{aligned}
    \end{equation*}
\end{lemma}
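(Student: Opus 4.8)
The plan is to deduce Lemma~\ref{lemprojection} from the standard approximation and stability theory of piecewise linear elements on the shape regular, quasi-uniform mesh $\mathcal{T}_h$. Fix $q\in\{1,2\}$ and let $u\in H^q(\Omega)$ (vanishing on $\partial\Omega$, as is the case in every application of the lemma here, so that a conforming comparison element in $X_h$ is available). First I would introduce a quasi-interpolation operator $\Pi_h\colon H^1_0(\Omega)\to X_h$ of Scott--Zhang (or Cl\'ement) type. Unlike the nodal interpolant, $\Pi_h$ is well defined on all of $H^1_0(\Omega)$ --- the essential point for $q=1$ when $n=2,3$, since then $H^1\not\hookrightarrow C(\bar{\Omega})$ --- it preserves the homogeneous Dirichlet condition, and, via the Bramble--Hilbert lemma together with affine scaling on each simplex, it satisfies the local estimates
\[
\|u-\Pi_h u\|_{L^2(\Omega)}+h\,\|\nabla(u-\Pi_h u)\|_{L^2(\Omega)}\le C h^q\|u\|_{H^q(\Omega)},\qquad q=1,2.
\]

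Next I would treat the two norms separately. For the $L^2$ term, since $P_h u$ is by definition the best $L^2(\Omega)$-approximation of $u$ in $X_h$ and $\Pi_h u\in X_h$, one obtains at once
\[
\|P_h u-u\|_{L^2(\Omega)}\le\|\Pi_h u-u\|_{L^2(\Omega)}\le C h^q\|u\|_{H^q(\Omega)}.
\]
For the gradient term, split $\nabla(P_h u-u)=\nabla(P_h u-\Pi_h u)+\nabla(\Pi_h u-u)$. The second piece is bounded by $Ch^{q-1}\|u\|_{H^q(\Omega)}$ by the interpolation estimate above. For the first piece, both $P_h u$ and $\Pi_h u$ lie in $X_h$, so the global inverse inequality $\|\nabla v_h\|_{L^2(\Omega)}\le Ch^{-1}\|v_h\|_{L^2(\Omega)}$ (valid on quasi-uniform meshes), combined with the triangle inequality and the $L^2$ bounds just obtained, gives
\[
\|\nabla(P_h u-\Pi_h u)\|_{L^2(\Omega)}\le Ch^{-1}\bigl(\|P_h u-u\|_{L^2(\Omega)}+\|u-\Pi_h u\|_{L^2(\Omega)}\bigr)\le Ch^{q-1}\|u\|_{H^q(\Omega)}.
\]
Adding the two contributions and combining with the $L^2$ estimate yields the stated bound; equivalently, this chain of inequalities is the customary proof of the $H^1$-stability of $P_h$ on quasi-uniform meshes, from which the approximation estimate follows.

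The step I expect to be the main obstacle is the choice of comparison operator in the borderline case $q=1$, $n\in\{2,3\}$: the Lagrange interpolant is not $H^1$-bounded there, so one genuinely needs a Scott--Zhang/Cl\'ement operator and must verify that it maps $H^1_0(\Omega)$ into $X_h$ (respecting the zero trace on the polygonal boundary) while still enjoying the first-order local estimates. Quasi-uniformity of $\mathcal{T}_h$ is equally indispensable, since it is precisely what licenses the global inverse inequality used to pass from the $L^2$ bound on $P_h u-\Pi_h u$ to an $H^1$ bound. Everything else is routine scaling and Bramble--Hilbert; as the result is classical, one may alternatively simply invoke \cite{Bazhlekova2015}.
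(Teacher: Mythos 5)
The paper gives no proof of this lemma --- it is simply quoted from \cite{Bazhlekova2015} --- so there is nothing to compare against; your argument (best $L^2$-approximation in $X_h$ for the first term, then a Scott--Zhang comparison element combined with the inverse inequality on the quasi-uniform mesh for the gradient term) is the standard proof of this estimate and is correct, including your observation that one must read $u$ as having zero trace so that approximation by elements of $X_h\subset H^1_0(\Omega)$ is meaningful, which is indeed how the lemma is used in the paper.
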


Denote $(\cdot,\cdot)$ as the $L_2$ inner product. The semidiscrete Galerkin scheme for system \eqref{equrqtosol} reads: Find $ G_{1,h}\in X_h$ and $G_{2,h}\in X_h$ such that
\begin{equation}\label{equequsysspatialsemischeme}
\left\{\begin{aligned}
&\left (\frac{\partial G_{1,h}}{\partial t},v_{1,h}\right )+a~_0D^{1-\alpha_1}_t(G_{1,h},v_{1,h})+~_0D^{1-\alpha_1}_t\langle G_{1,h}, v_{1,h}\rangle_{s_1}
\\
&=a~_0D^{1-\alpha_2}_t(G_{2,h},v_{1,h}),\\
&\left (\frac{\partial G_{2,h}}{\partial t},v_{2,h}\right )+a~_0D^{1-\alpha_2}_t(G_{2,h},v_{2,h})+~_0D^{1-\alpha_2}_t\langle G_{2,h}, v_{2,h}\rangle_{s_2}
\\
&=a~_0D^{1-\alpha_1}_t(G_{1,h},v_{2,h}),\\
\end{aligned}\right.
\end{equation}
where $v_{1,h},\ v_{2,h}\in X_h$. As for $G_{1,h}(0)$ and $ G_{2,h}(0) $, we take $G_{1,h}(0)=P_hG_{1,0}$, $G_{2,h}(0)=P_hG_{2,0}$.

Define the discrete operators $A_{i,h}$: $X_h\rightarrow X_h$ as
\begin{equation*}
(A_{i,h} u_h,v_h)=\langle u_h,v_h\rangle_{s_i} \quad\forall u_h,\ v_h\in X_h,\ i=1,2.
\end{equation*}
Then  \eqref{equequsysspatialsemischeme}  can be rewritten as
\begin{equation}\label{equequsysspatialsemiAh}
\begin{aligned}
&\frac{\partial G_{1,h}}{\partial t}+a~_0D^{1-\alpha_1}_tG_{1,h}+~_0D^{1-\alpha_1}_tA_{1,h} G_{1,h}=a~_0D^{1-\alpha_2}_tG_{2,h},\\
&\frac{\partial G_{2,h}}{\partial t}+a~_0D^{1-\alpha_2}_tG_{2,h}+~_0D^{1-\alpha_2}_tA_{2,h} G_{2,h}=a~_0D^{1-\alpha_1}_tG_{1,h}.\\
\end{aligned}
\end{equation}
Taking the Laplace transforms of \eqref{equequsysspatialsemiAh}, we get
\begin{equation}\label{equequsysspatialsemilapform}
\begin{aligned}
&z\tilde{G}_{1,h}+az^{1-\alpha_1}\tilde{G}_{1,h}+z^{1-\alpha_1}A_{1,h}\tilde{G}_{1,h}=az^{1-\alpha_2}\tilde{G}_{2,h}+G_{1,h}(0),\\
&z\tilde{G}_{2,h}+az^{1-\alpha_2}\tilde{G}_{2,h}+z^{1-\alpha_2}A_{2,h}\tilde{G}_{2,h}=az^{1-\alpha_1}\tilde{G}_{1,h}+G_{2,h}(0).
\end{aligned}
\end{equation}

Next we introduce two lemmas, which will be used in the error estimate between system \eqref{equrqtosol} and  space semidiscrete scheme \eqref{equequsysspatialsemischeme}.

\begin{lemma}\label{lemestofapb}
     For any $\phi\in \hat{H}^s(\Omega)$, $z\in \Sigma_{\theta,\kappa}$ with $\theta\in(\pi/2,\pi)$ and $\kappa$ being taken to be large enough to ensure $g(z)=z^{\alpha}+a\in \Sigma_{\theta}$, there exists
    \begin{equation*}
    |g(z)|\|\phi\|^2_{L^2(\Omega)}+\|\phi\|^2_{\hat{H}^{s}(\Omega)}\leq C\left |g(z)\|\phi\|^2_{L^2(\Omega)}+\|\phi\|^2_{\hat{H}^{s}(\Omega)}\right |.
    \end{equation*}
\end{lemma}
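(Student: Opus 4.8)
The plan is to reduce the claimed two-sided bound to an elementary fact about complex numbers. Write $g(z) = z^\alpha + a$ and note that, by hypothesis, $\kappa$ has been chosen large enough that $g(z) \in \Sigma_\theta$ for all $z \in \Sigma_{\theta,\kappa}$; that is, $|\arg g(z)| \le \theta < \pi$. Set $p = \|\phi\|^2_{L^2(\Omega)} \ge 0$ and $q = \|\phi\|^2_{\hat H^s(\Omega)} \ge 0$. The left-hand side of the asserted inequality is $|g(z)|\,p + q$, a nonnegative real number, while the right-hand side is $C\,\bigl|g(z)\,p + q\bigr|$. Since the inequality $|g(z)|\,p + q \le C\,|g(z)\,p + q|$ trivially holds when $p = 0$ (both sides equal $q$, take $C\ge 1$), the only content is the case $p > 0$, where after dividing by $p$ it suffices to show that $|w| + r \le C\,|w + r|$ for every $w \in \Sigma_\theta$ and every $r \ge 0$, with $C$ depending only on $\theta$.

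The key step is therefore the scalar lemma: for $w$ with $|\arg w| \le \theta < \pi$ and $r \ge 0$, one has $|w + r| \ge c_\theta (|w| + r)$ for some $c_\theta > 0$. I would prove this by writing $w = \rho e^{\mathbf{i}\psi}$ with $|\psi| \le \theta$, so that
\begin{equation*}
|w + r|^2 = \rho^2 + 2 r \rho \cos\psi + r^2 \ge \rho^2 + 2 r \rho \cos\theta + r^2,
\end{equation*}
using $\cos\psi \ge \cos\theta$ (valid because $\cos$ is decreasing on $[0,\pi]$ and the bound is even in $\psi$). If $\cos\theta \ge 0$ this is already $\ge \rho^2 + r^2 \ge \tfrac12(\rho + r)^2$. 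In the relevant regime $\theta > \pi/2$, write $\cos\theta = -|\cos\theta|$ with $|\cos\theta| < 1$, and use $2r\rho \ge -(\rho^2 + r^2)$ more carefully: one gets $\rho^2 + r^2 - 2r\rho|\cos\theta| \ge (1 - |\cos\theta|)(\rho^2 + r^2) \ge \tfrac{1-|\cos\theta|}{2}(\rho + r)^2$. Hence $|w + r| \ge c_\theta(|w| + r)$ with $c_\theta = \sqrt{(1 - |\cos\theta|)/2} > 0$, and the lemma follows with $C = 1/c_\theta$.

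Finally I would assemble the pieces: apply the scalar lemma with $w = g(z)$ (legitimate precisely because the choice of $\kappa$ guarantees $g(z) \in \Sigma_\theta$) and $r = q/p$, then multiply through by $p$ to recover
\begin{equation*}
|g(z)|\,\|\phi\|^2_{L^2(\Omega)} + \|\phi\|^2_{\hat H^s(\Omega)} \le C\,\bigl|\,g(z)\,\|\phi\|^2_{L^2(\Omega)} + \|\phi\|^2_{\hat H^s(\Omega)}\,\bigr|,
\end{equation*}
with $C = C(\theta)$ independent of $z$ and $\phi$. The only mild subtlety, and the one point worth stating carefully, is the justification that $g(z) \in \Sigma_\theta$ for large $\kappa$: since $z^\alpha$ dominates the constant $a$ as $|z| \to \infty$ and $\arg(z^\alpha) = \alpha \arg z$ with $\alpha \in (0,1)$ so that $|\arg(z^\alpha)| \le \alpha\theta < \theta$, a perturbation argument shows $|\arg(z^\alpha + a)| \le \theta$ once $|z| \ge \kappa$ is large enough — this is exactly the hypothesis the lemma is stated under, so no further work is needed beyond invoking it.
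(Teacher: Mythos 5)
Your proposal is correct and follows essentially the same route as the paper: both reduce the claim to the elementary sector inequality $\gamma|w|+\beta\leq C|\gamma w+\beta|$ for $w\in\Sigma_\theta$ and $\gamma,\beta\geq 0$, which the paper simply quotes (with constant $1/\sin(\theta/2)$) while you derive it from the law of cosines with the admissible, if not sharp, constant $\sqrt{2/(1-|\cos\theta|)}$. The only point worth noting is that positivity of the constant hinges on $\theta<\pi$, which you correctly use, and that the case $\|\phi\|_{L^2(\Omega)}=0$ is degenerate (it forces $\phi=0$), so your separate treatment of it is harmless but not needed.
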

\begin{lemma}\label{lemeroper}
    Let $v\in L^2(\Omega)$, $A=(-\Delta)^s$ with  homogeneous Dirichlet boundary condition, and $z\in\Gamma_{\theta,\kappa}$. Denote $w=H(z,A,\alpha,0)v$ and $w_h=H(z,A_h,\alpha,0)P_hv$.  Then one has
    \begin{equation*}
    \|w-w_h\|_{L^2(\Omega)}+h^\gamma\|w-w_h\|_{\hat{H}^s(\Omega)}\leq Ch^{2\gamma}\|v\|_{L^2(\Omega)},
    \end{equation*}
    where
    \begin{equation*}
        (A_{h} u_h,v_h)=\langle u_h,v_h\rangle_{s} \quad\forall u_h,\ v_h\in X_h,
    \end{equation*}
     and $\gamma\leq \min(s,1/2-\epsilon)$ with $\epsilon>0$ being arbitrarily small.
\end{lemma}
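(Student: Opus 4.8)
The plan is to prove the estimate by the standard finite-element resolvent argument: introduce the Ritz-type projection associated with the bilinear form $\langle\cdot,\cdot\rangle_s + g(z)(\cdot,\cdot)$, split the error $w-w_h$ into a projection error and a discrete error, and bound each piece using Lemma~\ref{lemestofapb}, Lemma~\ref{lemprojection}, and Theorem~\ref{thmregdiri}. Concretely, $w$ satisfies $\langle w,\chi\rangle_s + g(z)(w,\chi) = (v,\chi)$ for all $\chi\in\hat H^s(\Omega)$, while $w_h$ satisfies the same identity restricted to $\chi\in X_h$ (using $(P_hv,\chi)=(v,\chi)$ for $\chi\in X_h$). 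Hence the Galerkin orthogonality $\langle w-w_h,\chi\rangle_s + g(z)(w-w_h,\chi)=0$ holds for all $\chi\in X_h$.

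First I would take $\chi = w_h - P_h w \in X_h$ (or, more conveniently, test the error equation and use coercivity). Writing $w-w_h = (w-P_hw) + (P_hw - w_h) =: \rho + \eta$, Galerkin orthogonality gives $\langle\eta,\eta\rangle_s + g(z)(\eta,\eta) = -\langle\rho,\eta\rangle_s - g(z)(\rho,\eta)$. Using Lemma~\ref{lemestofapb} on the left to get $|g(z)|\|\eta\|_{L^2}^2 + \|\eta\|_{\hat H^s}^2 \le C|g(z)(\eta,\eta)+\langle\eta,\eta\rangle_s|$, together with Cauchy--Schwarz and the fact that $(\rho,\eta)=0$ since $\rho = w - P_hw \perp X_h$ and $\eta\in X_h$, the $g(z)(\rho,\eta)$ term actually vanishes, leaving only $|\langle\rho,\eta\rangle_s|\le \|\rho\|_{\hat H^s}\|\eta\|_{\hat H^s}$. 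This yields $\|\eta\|_{\hat H^s}\le C\|\rho\|_{\hat H^s}$ and $|g(z)|^{1/2}\|\eta\|_{L^2}\le C\|\rho\|_{\hat H^s}$. For the projection error I invoke the regularity $w = H(z,A,\alpha,0)v = (z^\alpha+a+A)^{-1}v$: Lemma~\ref{lemtheestimateofHblabla} (with $\beta=0$) gives $\|Aw\|_{L^2}\le C\|v\|_{L^2}$, so by Theorem~\ref{thmregdiri} $\|w\|_{\hat H^{s+\gamma}}\le C\|v\|_{L^2}$ with $\gamma=\min(s,1/2-\epsilon)$, and then Lemma~\ref{lemprojection} (interpolated to the fractional order $s+\gamma\le 2$) gives $\|\rho\|_{L^2} + h^\gamma\|\rho\|_{\hat H^s}\le Ch^{s+\gamma}\|w\|_{\hat H^{s+\gamma}}\le Ch^{2\gamma}\|v\|_{L^2}$ (using $s\ge\gamma$). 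Combining, $\|\eta\|_{\hat H^s}\le Ch^\gamma\|v\|_{L^2}$ and $\|\eta\|_{L^2}\le C|g(z)|^{-1/2}h^\gamma\|v\|_{L^2}\le Ch^\gamma\|v\|_{L^2}$; a duality (Nitsche) argument then upgrades the $L^2$ bound on $\eta$ to $\|\eta\|_{L^2}\le Ch^{2\gamma}\|v\|_{L^2}$. Adding the projection and discrete pieces gives the claimed bound.

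The main obstacle I anticipate is the duality/Nitsche step for the $L^2$ estimate on $\eta$: one sets up the adjoint problem $\langle\psi,\chi\rangle_s + \overline{g(z)}(\psi,\chi) = (\eta,\chi)$, uses Theorem~\ref{thmregdiri} to get $\|\psi\|_{\hat H^{s+\gamma}}\le C\|\eta\|_{L^2}$ (being careful that the resolvent estimate of Lemma~\ref{lemtheestimateofHblabla} controls $\|A\psi\|_{L^2}$ uniformly in $z\in\Gamma_{\theta,\kappa}$, so the constant is $z$-independent), and then exploits Galerkin orthogonality once more to pair $\eta$ against $\psi - P_h\psi$. The technical care needed is to track that all constants remain independent of $z$ along the contour — this is exactly where Lemma~\ref{lemestofapb} and the uniform resolvent estimates from \cite{Acosta20174} do the work — and to handle the fractional (non-integer) interpolation of the projection error properly, since $\hat H^s$ and $\hat H^{s+\gamma}$ are fractional-order spaces and Lemma~\ref{lemprojection} is stated only for integer $q$. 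For the interpolation I would use that $\hat H^s(\Omega)$ for $s<1/2$ coincides with the interpolation space between $L^2$ and $H^1$ (or $H^2$), as noted in the Remark after \eqref{equdefHsOmega}, so Lemma~\ref{lemprojection} transfers by the interpolation property of \cite{Adams2003}.
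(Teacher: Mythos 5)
Your proposal is correct and follows essentially the same route as the paper's proof: Galerkin orthogonality for $e=w-w_h$, the sectorial coercivity of Lemma~\ref{lemestofapb}, the resolvent and elliptic-regularity bounds $\|w\|_{\hat{H}^{s+\gamma}(\Omega)}\leq C\|v\|_{L^2(\Omega)}$ via Theorem~\ref{thmregdiri}, and a Nitsche duality argument for the $L^2$ estimate. The only (cosmetic) difference is that you split the error through the $L^2$ projection so that $g(z)(\rho,\eta)=0$, whereas the paper tests the orthogonality relation directly against $w-\pi_h w$ and absorbs the resulting $|g(z)|$-weighted term using $\|w\|_{\hat{H}^s(\Omega)}\leq C|g(z)|^{-1/2}\|v\|_{L^2(\Omega)}$; both variants yield the same bound.
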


\begin{remark}
The proofs of Lemmas \ref{lemestofapb} and \ref{lemeroper} are similar to the ones in \cite{Acosta20174,Bazhlekova2015}.
\end{remark}

When $v\in \hat{H}^{\sigma}(\Omega)$, we modify the estimate in Lemma \ref{lemeroper} as
\begin{lemma}\label{lemeroper2}
    Let $A=(-\Delta)^s$ with  homogeneous Dirichlet boundary condition, $s<1/2$,  and $z\in\Gamma_{\theta,\kappa}$. Assume $v\in \hat{H}^{\sigma}(\Omega)$ with $\sigma<1/2-s$. Denote $w=H(z,A,\alpha,0)v$ and $w_h=H(z,A_h,\alpha,0)P_hv$. Then there exists
    \begin{equation*}
    \|w-w_h\|_{L^2(\Omega)}+h^s\|w-w_h\|_{\hat{H}^s(\Omega)}\leq Ch^{2s+\sigma}\|v\|_{\hat{H}^{\sigma}(\Omega)},
    \end{equation*}
    where
    \begin{equation*}
        (A_{h} u_h,v_h)=\langle u_h,v_h\rangle_{s}  \quad\forall u_h,\ v_h\in X_h.
    \end{equation*}
\end{lemma}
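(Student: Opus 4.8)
The argument follows the pattern of the proof of Lemma \ref{lemeroper}, with two changes: the extra $\sigma$ smoothness of the data is transferred to $w$ through the elliptic regularity of $(-\Delta)^{s}$, and the $\hat H^{s}$ error is recovered from the $L^{2}$ error by an inverse inequality instead of directly from an energy estimate. Write $g(z)=z^{\alpha}+a$ and $B_z(\phi,\varphi)=g(z)(\phi,\varphi)+\langle\phi,\varphi\rangle_{s}$, so that $w$ solves $B_z(w,\chi)=(v,\chi)$ for all $\chi\in\hat H^{s}(\Omega)$ while $w_h$ solves $B_z(w_h,\chi)=(P_hv,\chi)=(v,\chi)$ for all $\chi\in X_h$ (the latter problem being uniquely solvable because $B_z$ is coercive on $X_h$ by Lemma \ref{lemestofapb}); hence $e:=w-w_h$ satisfies the Galerkin orthogonality $B_z(e,\chi)=0$ for every $\chi\in X_h$. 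Let $\Pi_h:\hat H^{s}(\Omega)\to X_h$ be a quasi-interpolation operator onto $X_h$ with the usual estimates $\|\phi-\Pi_h\phi\|_{L^{2}(\Omega)}\le Ch^{t}\|\phi\|_{\hat H^{t}(\Omega)}$ and $\|\phi-\Pi_h\phi\|_{\hat H^{s}(\Omega)}\le Ch^{t-s}\|\phi\|_{\hat H^{t}(\Omega)}$ for $s\le t\le 2$ (cf. \cite{Acosta20174,Bazhlekova2015}); note $2s+\sigma<s+1/2<1$ here, so these estimates apply at $t=2s+\sigma$.

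The first step, which I expect to be the crux, is a $z$-uniform regularity bound for $w$. Since $Aw=v-g(z)w=AH(z,A,\alpha,0)v$, the second estimate of Lemma \ref{lemtheestimateofHblablainH} with $\beta=0$ — legitimate since $\sigma<1/2-s<1/2+s$ — gives $\|Aw\|_{\hat H^{\sigma}(\Omega)}\le C\|v\|_{\hat H^{\sigma}(\Omega)}$ with $C$ independent of $z$. Applying Theorem \ref{thmregdiri} with $r=\sigma$, and noting that $\sigma<1/2-s$ makes $\min(s+\sigma,1/2-\epsilon)=s+\sigma$ for $\epsilon$ small, we get $|w|_{H^{2s+\sigma}(\mathbb R^{n})}\le C\|Aw\|_{H^{\sigma}(\Omega)}$; combined with $\|w\|_{L^{2}(\Omega)}\le C|g(z)|^{-1}\|v\|_{L^{2}(\Omega)}$ from Lemma \ref{lemtheestimateofHblabla} this yields
\[
\|w\|_{\hat H^{2s+\sigma}(\Omega)}\le C\|v\|_{\hat H^{\sigma}(\Omega)},\qquad z\in\Gamma_{\theta,\kappa},
\]
with $C$ independent of $z$. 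The delicate point is precisely that the powers of $|g(z)|$ must drop out so that the constant does not blow up as $|z|\to\infty$; this is exactly what the hypothesis $\sigma<1/2-s$ buys us, and it is the main obstacle in the proof.

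Next I will bound $\|e\|_{L^{2}(\Omega)}$ by a duality (Aubin--Nitsche) argument. For $\phi\in L^{2}(\Omega)$ put $\psi=H(z,A,\alpha,0)\phi$ and $\psi_h=H(z,A_h,\alpha,0)P_h\phi$; then $B_z(\psi,\chi)=(\phi,\chi)$ on $\hat H^{s}(\Omega)$ and $B_z(\psi_h,\chi)=(\phi,\chi)$ on $X_h$. Using the symmetry of $B_z$ and the two Galerkin orthogonalities one obtains the identity $(e,\phi)=B_z(e,\psi-\psi_h)=B_z(w-\Pi_hw,\psi-\psi_h)$, whence
\[
|(e,\phi)|\le |g(z)|\,\|w-\Pi_hw\|_{L^{2}}\|\psi-\psi_h\|_{L^{2}}+\|w-\Pi_hw\|_{\hat H^{s}}\|\psi-\psi_h\|_{\hat H^{s}}.
\]
In the first term I will use $\|w-\Pi_hw\|_{L^{2}}\le Ch^{2s+\sigma}\|w\|_{\hat H^{2s+\sigma}}\le Ch^{2s+\sigma}\|v\|_{\hat H^{\sigma}}$ together with the crude bound $\|\psi-\psi_h\|_{L^{2}}\le\|\psi\|_{L^{2}}+\|\psi_h\|_{L^{2}}\le C|g(z)|^{-1}\|\phi\|_{L^{2}}$ (Lemma \ref{lemtheestimateofHblabla} and its discrete analogue), so the powers of $|g(z)|$ cancel; in the second term I will use $\|w-\Pi_hw\|_{\hat H^{s}}\le Ch^{s+\sigma}\|v\|_{\hat H^{\sigma}}$ and the bound $\|\psi-\psi_h\|_{\hat H^{s}}\le Ch^{s}\|\phi\|_{L^{2}}$ supplied by Lemma \ref{lemeroper} (applicable since $\phi\in L^{2}(\Omega)$ and $\min(s,1/2-\epsilon)=s$ for $\epsilon$ small). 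Both contributions are $\le Ch^{2s+\sigma}\|v\|_{\hat H^{\sigma}}\|\phi\|_{L^{2}}$, so taking the supremum over $\phi$ gives $\|e\|_{L^{2}(\Omega)}\le Ch^{2s+\sigma}\|v\|_{\hat H^{\sigma}(\Omega)}$.

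Finally, for the $\hat H^{s}$ error I split $e=(w-\Pi_hw)+(\Pi_hw-w_h)$. The first summand is controlled by $Ch^{s+\sigma}\|v\|_{\hat H^{\sigma}}$ via the approximation property of $\Pi_h$ and the regularity estimate above; for the second, which lies in $X_h$, the inverse inequality $\|v_h\|_{\hat H^{s}(\Omega)}\le Ch^{-s}\|v_h\|_{L^{2}(\Omega)}$ on the quasi-uniform mesh, together with $\|\Pi_hw-w_h\|_{L^{2}}\le\|\Pi_hw-w\|_{L^{2}}+\|e\|_{L^{2}}\le Ch^{2s+\sigma}\|v\|_{\hat H^{\sigma}}$, gives $\|\Pi_hw-w_h\|_{\hat H^{s}}\le Ch^{s+\sigma}\|v\|_{\hat H^{\sigma}}$. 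Hence $\|e\|_{\hat H^{s}(\Omega)}\le Ch^{s+\sigma}\|v\|_{\hat H^{\sigma}(\Omega)}$, and adding $h^{s}$ times this to the $L^{2}$ estimate yields $\|w-w_h\|_{L^{2}(\Omega)}+h^{s}\|w-w_h\|_{\hat H^{s}(\Omega)}\le Ch^{2s+\sigma}\|v\|_{\hat H^{\sigma}(\Omega)}$, as desired.
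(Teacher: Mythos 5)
Your proof is correct, but it reaches the estimate by a genuinely different route from the paper's. The paper first proves the endpoint case $\sigma=1/2-s-\epsilon$: it derives a $|g(z)|^{1/2}$-weighted energy (C\'ea-type) estimate $|g(z)|^{1/2}\|e\|_{L^2(\Omega)}+\|e\|_{\hat H^s(\Omega)}\leq Ch^{1/2-\epsilon}\|v\|_{\hat H^{1/2-s-\epsilon}(\Omega)}$ from Galerkin orthogonality and Lemma \ref{lemestofapb}, then runs an Aubin--Nitsche argument pairing $|g(z)|^{1/2}\|e\|_{L^2}$ against $|g(z)|^{1/2}\|\psi-\psi_h\|_{L^2}$, and finally obtains intermediate $\sigma$ by operator interpolation against Lemma \ref{lemeroper}. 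You instead treat every $\sigma<1/2-s$ directly: you prove the $z$-uniform regularity $\|w\|_{\hat H^{2s+\sigma}(\Omega)}\leq C\|v\|_{\hat H^{\sigma}(\Omega)}$ (which is indeed the crux, and which the paper establishes only at the endpoint), then use the fully symmetric duality identity $(e,\phi)=B_z(w-\Pi_hw,\psi-\psi_h)$ so that the factor $|g(z)|$ is cancelled by the $|g(z)|^{-1}$ decay of $\|\psi-\psi_h\|_{L^2}$ rather than by a weighted energy estimate, and you recover the $\hat H^s$ error from the $L^2$ error via an inverse inequality on the quasi-uniform mesh. Each step checks out: Lemma \ref{lemtheestimateofHblablainH} applies since $\sigma<1/2-s<1/2+s$, Theorem \ref{thmregdiri} gives $\gamma=s+\sigma$ because $s+\sigma<1/2$, and Lemma \ref{lemeroper} supplies $\|\psi-\psi_h\|_{\hat H^s}\leq Ch^s\|\phi\|_{L^2}$. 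What your version buys is the avoidance of the final interpolation step and of the weighted energy estimate, at the price of invoking an inverse inequality in the fractional norm (harmless here, since quasi-uniformity is assumed); your use of a quasi-interpolant in place of the Lagrange interpolant is also the more careful choice, since $w\in\hat H^{2s+\sigma}(\Omega)$ need not be continuous. What the paper's version buys is the intermediate weighted estimate $|g(z)|^{1/2}\|e\|_{L^2}+\|e\|_{\hat H^s}\leq Ch^{1/2-\epsilon}\|v\|_{\hat H^{1/2-s-\epsilon}}$, which is reusable elsewhere in the error analysis.
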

\begin{proof}
Let $\epsilon>0$ be arbitrarily small. Here we first consider $\sigma=1/2-s-\epsilon$. Using the notation of $g(z)$ in Lemma \ref{lemestofapb} and the definitions of $w$ and $w_h$, there exist
    \begin{equation*}
    \begin{aligned}
    &g(z)(w,\chi)+\langle w,\chi\rangle_s=(v,\chi)\quad \forall \chi\in \hat{H}^s(\Omega),\\
    &g(z)(w_h,\chi)+\langle w_h,\chi\rangle_s=(v,\chi)\quad \forall \chi\in X_h.\\
    \end{aligned}
    \end{equation*}
    Thus
    \begin{equation*}
    g(z)(e,\chi)+\langle e,\chi\rangle_s=0\quad \forall \chi\in X_h,
    \end{equation*}
    where $e=w-w_h$.
    By Lemma \ref{lemestofapb}, one has
    \begin{equation*}
    \begin{aligned}
    |g(z)|\|e\|^2_{L^2(\Omega)}+\|e\|^2_{\hat{H}^s(\Omega)}\leq& C\left |g(z)\|e\|^2_{L^2(\Omega)}+\|e\|^2_{\hat{H}^s(\Omega)}\right |\\
    =&C\left |g(z)(e,w-\chi)+\langle e,(w-\chi)\rangle_s\right |.
    \end{aligned}
    \end{equation*}
    Taking $\chi=\pi_h w$ as the Lagrange interpolation of $w$ and using the Cauchy-Schwarz inequality, we obtain
    \begin{equation*}
    \begin{aligned}
   & |g(z)|\|e\|^2_{L^2(\Omega)}+\|e\|^2_{\hat{H}^s(\Omega)}
      \\	
   & \leq C h^{1/2-\epsilon}|g(z)|\|e\|_{L^2(\Omega)}\|w\|_{\hat{H}^{1/2-\epsilon}(\Omega)}+\|e\|_{\hat{H}^{s}(\Omega)}h^{1/2-\epsilon}\|w\|_{\hat{H}^{s+1/2-\epsilon}(\Omega)}.
     \end{aligned}
    \end{equation*}
    According to Lemma \ref{lemestofapb}, there is 
    \begin{equation*}
    \begin{aligned}
    |g(z)|\|w\|^2_{L^2(\Omega)}+\|w\|^2_{\hat{H}^s(\Omega)}\leq& C|((g(z)+A)w,w)|
    \\	
    \leq& C\|v\|_{L^2(\Omega)}\|w\|_{L^2(\Omega)}.
    \end{aligned}
    \end{equation*}
    Thus
    \begin{equation*}
    \begin{aligned}
    &\|w\|_{L^2(\Omega)}\leq C|g(z)|^{-1}\|v\|_{L^2(\Omega)},\quad\| w\|^2_{\hat{H}^s(\Omega)}\leq C|g(z)|^{-1}\|v\|^2_{L^2(\Omega)}.
    \end{aligned}	
    \end{equation*}
    Therefore, $\|(g(z)+A)^{-1}\|_{L^2(\Omega)\rightarrow \hat{H}^s(\Omega)}\leq C|g(z)|^{-1/2}$. Similar to Lemma $\ref{lemtheestimateofHblablainH}$, there exist
    \begin{equation*}
    	\begin{aligned}
    	&\|w\|_{\hat{H}^{2s+\sigma}(\Omega)}\leq\|Aw\|_{\hat{H}^{\sigma}(\Omega)}\leq \|A(g(z)+A)^{-1}v\|_{\hat{H}^{\sigma}(\Omega)}\leq \|v\|_{\hat{H}^{\sigma}(\Omega)},\\
    	&\|w\|_{\hat{H}^{2s+\sigma}(\Omega)}\leq\|Aw\|_{\hat{H}^{\sigma}(\Omega)}\leq \|A(g(z)+A)^{-1}v\|_{\hat{H}^{\sigma}(\Omega)}\leq |g(z)|^{-1}\|v\|_{\hat{H}^{2s+\sigma}(\Omega)}.
    	\end{aligned}
    \end{equation*}
    Using the interpolation property, we get
    \begin{equation*}
    	\|w\|_{\hat{H}^{2s+\sigma}(\Omega)}\leq |g(z)|^{-1/2}\|v\|_{\hat{H}^{s+\sigma}(\Omega)}.
    \end{equation*}
   Further using the interpolation property leads to
    \begin{equation*}
    \|w\|_{\hat{H}^{1/2-\epsilon}(\Omega)}\leq C |g(z)|^{-1/2}\|v\|_{\hat{H}^{1/2-s-\epsilon}(\Omega)}.
    \end{equation*}
    On the other hand, using Theorem \ref{thmregdiri} and Lemma \ref{lemtheestimateofHblablainH}, we obtain
    \begin{equation*}
    \begin{aligned}
    &\|w\|_{\hat{H}^{s+1/2-\epsilon}(\Omega)}\leq\|Aw\|_{\hat{H}^{1/2-s-\epsilon}(\Omega)}
    \\
    &
    \leq C\|(g(z)+A-g(z))(g(z)+A)^{-1}v\|_{\hat{H}^{1/2-s-\epsilon}(\Omega)}\\
    &\leq C\|v\|_{\hat{H}^{1/2-s-\epsilon}(\Omega)}+C|g(z)|\|w\|_{\hat{H}^{1/2-s-\epsilon}(\Omega)}\leq C\|v\|_{\hat{H}^{1/2-s-\epsilon}(\Omega)}.
    \end{aligned}
    \end{equation*}
    Thus
    \begin{equation*}
    \begin{aligned}
    & |g(z)|\|e\|^2_{L^2(\Omega)}+\|e\|^2_{\hat{H}^s(\Omega)}
    \\
    & \leq Ch^{1/2-\epsilon}\|v\|_{\hat{H}^{1/2-s-\epsilon}(\Omega)}\left(|g(z)|^{1/2}\|e\|_{L^2(\Omega)}+\|e\|_{\hat{H}^s(\Omega)}\right),
    \end{aligned}
    \end{equation*}
    which leads to
    \begin{equation*}
    |g(z)|^{1/2}\|e\|_{L^2(\Omega)}+\|e\|_{\hat{H}^{s}(\Omega)}\leq Ch^{1/2-\epsilon}\|v\|_{\hat{H}^{1/2-s-\epsilon}(\Omega)}.
    \end{equation*}
    Similarly, for $\phi\in L^2(\Omega)$ we set
    \begin{equation*}
    \psi=(g(z)+A)^{-1}\phi, \quad \psi_h=(g(z)+A_h)^{-1}P_h\phi.
    \end{equation*}
    By a duality argument, one has
    \begin{equation*}
    \|e\|_{L^2(\Omega)}= \sup_{\phi\in L^2(\Omega)}\frac{|(e,\phi)|}{\|\phi\|_{L^2(\Omega)}}=\sup_{\phi\in L^2(\Omega)}\frac{|g(z)(e,\psi)+\langle e,\psi\rangle_s|}{\|\phi\|_{L^2(\Omega)}}.
    \end{equation*}
    Then
    \begin{equation*}
    \begin{aligned}
    |g(z)(e,\psi) +\langle e,\psi\rangle_s|=& |g(z)(e,\psi - \psi_h) + \langle e,(\psi - \psi_h )\rangle_s|\\
    \leq&|g(z)|^{1/2}\|e\|_{L^2(\Omega)}|g(z)|^{1/2}\|\psi -\psi_h\|_{L^2(\Omega)}\\
    &+\|e\|_{\hat{H}^s(\Omega)}\|\psi - \psi_h\|_{\hat{H}^s(\Omega)}\\
    \leq& ch^{s+1/2-\epsilon} \|v\|_{\hat{H}^{1/2-s-\epsilon}(\Omega)}\|\phi\|_{L^2(\Omega)},
    \end{aligned}
    \end{equation*}
    where we have used the fact that $|g(z)|^{1/2}\|\psi-\psi_h\|_{L^2(\Omega)}+\|\psi-\psi_h\|_{\hat{H}^s(\Omega)}\leq Ch^{s} \|\phi\|_{L^2(\Omega)}$\cite{Acosta20174}. Thus
    \begin{equation*}
    \|w-w_h\|_{L^2(\Omega)}+h^s\|w-w_h\|_{\hat{H}^s(\Omega)}\leq Ch^{s+1/2-\epsilon}\|v\|_{\hat{H}^{1/2-s-\epsilon}(\Omega)}.
    \end{equation*}
    Combining Lemma \ref{lemeroper} and using interpolation property, one can get the desired estimate.
\end{proof}

For \eqref{equrqtosol}, we give the error estimates for the space semidiscrete scheme with nonsmooth initial values.
\begin{theorem}\label{thmnonsmoothdatasemi}
    Let $G_{1}$, $G_{2}$ and $G_{1,h}$, $G_{2,h}$ be the solutions of the systems \eqref{equrqtosol} and \eqref{equequsysspatialsemiAh}, respectively, $G_{1,0},\ G_{2,0}\in L^2(\Omega)$ and $G_{1,h}(0)=P_hG_{1,0}$, $G_{2,h}(0)=P_hG_{2,0}$. Then
    \begin{equation*}
    \begin{aligned}
    \|G_1(t)-G_{1,h}(t)\|_{L^2(\Omega)}\leq& Ch^{2\gamma_1}(t^{-\alpha_1}\|G_{1,0}\|_{L^2(\Omega)}+t^{\min(0,\alpha_2-\alpha_1)}\|G_{2,0}\|_{L^2(\Omega)})\\
    &+Ch^{2\gamma_2}(\|G_{1,0}\|_{L^2(\Omega)}+\|G_{2,0}\|_{L^2(\Omega)}),\\
    \|G_2(t)-G_{2,h}(t)\|_{L^2(\Omega)}\leq& Ch^{2\gamma_1}(\|G_{1,0}\|_{L^2(\Omega)}+\|G_{2,0}\|_{L^2(\Omega)})\\
    &+Ch^{2\gamma_2}(t^{\min(0,\alpha_1-\alpha_2)}\|G_{1,0}\|_{L^2(\Omega)}+t^{-\alpha_2}\|G_{2,0}\|_{L^2(\Omega)}),\\
    \end{aligned}
    \end{equation*}
    where $\gamma_1\leq \min(s_1,1/2-\epsilon)$ and $\gamma_2\leq \min(s_2,1/2-\epsilon)$ with $\epsilon>0$ being arbitrarily small.
\end{theorem}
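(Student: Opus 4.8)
The plan is to run the whole argument in the Laplace domain, mirroring the proof of Theorem \ref{thmhomoregularitynonsmooth}, then return to the time variable with Lemma \ref{lemestZ} and close with the generalized Gr\"onwall inequality of Lemma \ref{lemgrondwall}. Write $e_i=G_i-G_{i,h}$, denote its Laplace transform by $\tilde e_i$, and let $\{i,j\}=\{1,2\}$. Multiplying \eqref{equequsysspatialsemilapform} by $z^{\alpha_i-1}$ shows that $\tilde G_{i,h}=H(z,A_{i,h},\alpha_i,\alpha_i-1)P_hG_{i,0}+aH(z,A_{i,h},\alpha_i,\alpha_i-\alpha_j)\tilde G_{j,h}$, the exact discrete analogue of the continuous representation \eqref{equsolrepinlap0}. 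Subtracting these two identities and inserting the intermediate quantity $H(z,A_{i,h},\alpha_i,\alpha_i-\alpha_j)P_h\tilde G_j$, the key algebraic point is that, since $\tilde G_{j,h}\in X_h$, one has $P_h\tilde G_j-\tilde G_{j,h}=P_h\tilde e_j$; hence \emph{no} extra projection-error term is produced, and
\begin{equation*}
\begin{aligned}
\tilde e_i={}&\underbrace{H(z,A_i,\alpha_i,\alpha_i-1)G_{i,0}-H(z,A_{i,h},\alpha_i,\alpha_i-1)P_hG_{i,0}}_{=:E_i}\\
&+a\underbrace{\left(H(z,A_i,\alpha_i,\alpha_i-\alpha_j)\tilde G_j-H(z,A_{i,h},\alpha_i,\alpha_i-\alpha_j)P_h\tilde G_j\right)}_{=:F_i}+aH(z,A_{i,h},\alpha_i,\alpha_i-\alpha_j)P_h\tilde e_j.
\end{aligned}
\end{equation*}

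Next I would estimate each piece for $z\in\Gamma_{\theta,\kappa}$ with $\kappa$ large. Since $A_{i,h}$ is symmetric and positive definite on $X_h$, it is sectorial uniformly in $h$, so the resolvent bounds of Lemma \ref{lemtheestimateofHblabla} hold with $A_i$ replaced by $A_{i,h}$, in particular $\|H(z,A_{i,h},\alpha_i,\beta)\|_{L^2(\Omega)\rightarrow L^2(\Omega)}\le C|z|^{\beta-\alpha_i}$, and also $\|P_h\|_{L^2(\Omega)\rightarrow L^2(\Omega)}\le1$. Writing $H(z,\cdot,\alpha,\beta)=z^\beta H(z,\cdot,\alpha,0)$ and using Lemma \ref{lemeroper} give $\|E_i\|_{L^2(\Omega)}\le C|z|^{\alpha_i-1}h^{2\gamma_i}\|G_{i,0}\|_{L^2(\Omega)}$ and $\|F_i\|_{L^2(\Omega)}\le C|z|^{\alpha_i-\alpha_j}h^{2\gamma_i}\|\tilde G_j(z)\|_{L^2(\Omega)}$, while Lemma \ref{lemtheestimateofHblabla} applied to \eqref{equsolrepinlap1} yields the Laplace-domain a priori bound $\|\tilde G_j(z)\|_{L^2(\Omega)}\le C|z|^{-1}(\|G_{1,0}\|_{L^2(\Omega)}+\|G_{2,0}\|_{L^2(\Omega)})$. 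Substituting the identity for $\tilde e_j$ into the one for $\tilde e_i$ gives $\tilde e_i=E_i+aF_i+aH(z,A_{i,h},\alpha_i,\alpha_i-\alpha_j)P_h(E_j+aF_j)+a^2H(z,A_{i,h},\alpha_i,\alpha_i-\alpha_j)P_hH(z,A_{j,h},\alpha_j,\alpha_j-\alpha_i)P_h\tilde e_i$, and the five resulting terms are bounded in the $L^2(\Omega)$-norm by $C|z|^{\alpha_i-1}h^{2\gamma_i}\|G_{i,0}\|$, $C|z|^{\alpha_i-\alpha_j-1}h^{2\gamma_i}(\|G_{1,0}\|+\|G_{2,0}\|)$, $C|z|^{-1}h^{2\gamma_j}\|G_{j,0}\|$, $C|z|^{-\alpha_i-1}h^{2\gamma_j}(\|G_{1,0}\|+\|G_{2,0}\|)$ and $C|z|^{-\alpha_1-\alpha_2}\|\tilde e_i(z)\|_{L^2(\Omega)}$ respectively.

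Taking the inverse Laplace transform term by term and invoking Lemma \ref{lemestZ} (using $T/t>1$, and the elementary facts $t^{\alpha_j-\alpha_i}\le C\,t^{\min(0,\alpha_j-\alpha_i)}$ and $t^{\alpha_i}\le C$ for $0<t\le T$) turns the first four terms into the desired $h^{2\gamma_i}$- and $h^{2\gamma_j}$-contributions, and the fifth into a convolution $C\int_0^t(t-s)^{\alpha_1+\alpha_2-1}\|e_i(s)\|_{L^2(\Omega)}\,ds$ whose kernel is locally integrable because $\alpha_1+\alpha_2-1>-1$. Since in addition $t^{\min(0,\alpha_j-\alpha_i)}\le C\,t^{-\alpha_i}$ for $0<t\le T$, one obtains
\begin{equation*}
\begin{aligned}
\|e_i(t)\|_{L^2(\Omega)}\le{}&Ch^{2\gamma_i}\left(t^{-\alpha_i}\|G_{i,0}\|_{L^2(\Omega)}+t^{\min(0,\alpha_j-\alpha_i)}\|G_{j,0}\|_{L^2(\Omega)}\right)\\
&+Ch^{2\gamma_j}\left(\|G_{1,0}\|_{L^2(\Omega)}+\|G_{2,0}\|_{L^2(\Omega)}\right)+C\int_0^t(t-s)^{\alpha_1+\alpha_2-1}\|e_i(s)\|_{L^2(\Omega)}\,ds,
\end{aligned}
\end{equation*}
and Lemma \ref{lemgrondwall} (the exponents $1-\alpha_i$, $1+\min(0,\alpha_j-\alpha_i)$ and $1$ are all positive) removes the integral and yields the claimed estimates, with $i=1$ and $i=2$.

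The main obstacle is precisely the bookkeeping of powers of $|z|$ through the coupled substitution: one must check that every inverse-Laplace integral over $\Gamma_{\theta,\kappa}$ converges, that the time singularity of the leading $h^{2\gamma_i}$ term is no worse than $t^{-\alpha_i}$ while the lower-order $h^{2\gamma_j}$ term stays bounded, and that the $\tilde e_i$-dependent remainder is genuinely a time convolution with a locally integrable kernel so that Lemma \ref{lemgrondwall} applies. A secondary technical point is justifying that the discrete operators $A_{i,h}$ inherit the sectorial resolvent estimates of Lemma \ref{lemtheestimateofHblabla} uniformly in $h$, which is what allows Lemmas \ref{lemtheestimateofHblabla} and \ref{lemeroper} to be used for the semidiscrete problem.
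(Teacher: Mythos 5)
Your proof is correct and follows essentially the same route as the paper: the identical Laplace-domain decomposition (your $E_i$, $aF_i$, and $aH(z,A_{i,h},\alpha_i,\alpha_i-\alpha_j)P_h\tilde e_j$ are the paper's $\tilde{\uppercase\expandafter{\romannumeral1}}_1$, $\tilde{\uppercase\expandafter{\romannumeral1}}_2$, $\tilde{\uppercase\expandafter{\romannumeral1}}_3$), Lemma \ref{lemeroper} for the operator-approximation terms, and Lemma \ref{lemgrondwall} to close. The only cosmetic differences are that you decouple the system by substituting $\tilde e_j$ into $\tilde e_i$ before applying Gr\"onwall (the paper keeps the coupled pair of integral inequalities), and you bound $\tilde G_j$ pointwise on the contour via $\|\tilde G_j(z)\|_{L^2(\Omega)}\leq C|z|^{-1}(\|G_{1,0}\|_{L^2(\Omega)}+\|G_{2,0}\|_{L^2(\Omega)})$ rather than invoking the time-domain a priori estimate of Theorem \ref{thmhomoregularitynonsmooth}; both choices are legitimate and lead to the same bounds.
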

\begin{proof}
    From \eqref{equequsysspatialsemilapform}, one can get
    \begin{equation*}
    \begin{aligned}
    \tilde{G}_{1,h}=&H(z,A_{1,h},\alpha_1,\alpha_1-1)P_hG_{1,0}+aH(z,A_{1,h},\alpha_1,\alpha_1-\alpha_2)\tilde{G}_{2,h},\\
    \tilde{G}_{2,h}=&H(z,A_{2,h},\alpha_2,\alpha_2-1)P_hG_{2,0}+aH(z,A_{2,h},\alpha_2,\alpha_2-\alpha_1)\tilde{G}_{1,h}.\\
    \end{aligned}
    \end{equation*}
    Denote $e_1(t)=G_{1}(t)-G_{1,h}(t)$ and $e_2(t)=G_{2}(t)-G_{2,h}(t)$. Combining the above equation with \eqref{equsolrepinlap0} leads to
    \begin{equation}\label{equerrorrep}
    \begin{aligned}
    \tilde{e}_{1}=&z^{\alpha_1-1}(H(z,A_{1},\alpha_1,0)-H(z,A_{1,h},\alpha_1,0)P_h)G_{1,0}\\
    &+aH(z,A_{1},\alpha_1,\alpha_1-\alpha_2)\tilde{G}_{2}-aH(z,A_{1,h},\alpha_1,\alpha_1-\alpha_2)\tilde{G}_{2,h}\\\
    =&z^{\alpha_1-1}(H(z,A_{1,h},\alpha_1,0)-H(z,A_{1,h},\alpha_1,0)P_h)G_{1,0}\\
    &+z^{\alpha_1-\alpha_2}(aH(z,A_{1},\alpha_1,0)\tilde{G}_{2}-aH(z,A_{1,h},\alpha_1,0)P_h\tilde{G}_{2})\\
    &+aH(z,A_{1,h},\alpha_1,\alpha_1-\alpha_2)P_h\tilde{G}_{2}-aH(z,A_{1,h},\alpha_1,\alpha_1-\alpha_2)\tilde{G}_{2,h}\\
    =&\sum_{i=1}^{3}\tilde{\uppercase\expandafter{\romannumeral1}}_i,\\
    \tilde{e}_{2}=&z^{\alpha_2-1}(H(z,A_{2},\alpha_2,0)-H(z,A_{2,h},\alpha_2,0)P_h)G_{2,0}\\
    &+aH(z,A_{2},\alpha_2,\alpha_2-\alpha_1)\tilde{G}_{1}-aH(z,A_{2,h},\alpha_2,\alpha_2-\alpha_1)\tilde{G}_{1,h}\\\
    =&z^{\alpha_2-1}(H(z,A_{2,h},\alpha_2,0)-H(z,A_{2,h},\alpha_2,0)P_h)G_{2,0}\\
    &+z^{\alpha_2-\alpha_1}(aH(z,A_{2},\alpha_2,0)\tilde{G}_{1}-aH(z,A_{2,h},\alpha_2,0)P_h\tilde{G}_{1})\\
    &+aH(z,A_{2,h},\alpha_2,\alpha_2-\alpha_1)P_h\tilde{G}_{1}-aH(z,A_{2,h},\alpha_2,\alpha_2-\alpha_1)\tilde{G}_{1,h}\\
    =&\sum_{i=1}^{3}\tilde{\uppercase\expandafter{\romannumeral2}}_i.
    \end{aligned}
    \end{equation}
    We first consider $e_1$. For $\uppercase\expandafter{\romannumeral1}_1$, using inverse Laplace transform and Lemma \ref{lemeroper} leads to
    \begin{equation*}
    \begin{aligned}
    \|\uppercase\expandafter{\romannumeral1}_1\|_{L^2(\Omega)}\leq Ch^{2\gamma_1}\int_{\Gamma_{\theta,\kappa}}|e^{zt}||z|^{\alpha_1-1}|dz|\|G_{1,0}\|_{L^2(\Omega)}\leq Ch^{2\gamma_1}t^{-\alpha_1}\|G_{1,0}\|_{L^2(\Omega)}.
    \end{aligned}
    \end{equation*}
    For $\uppercase\expandafter{\romannumeral1}_2$, taking inverse Laplace transform, using Eq. \eqref{equsolrepinlap0}, Lemma \ref{lemeroper}, and Theorem \ref{thmhomoregularitynonsmooth}, we have
    \begin{equation*}
    \begin{aligned}
    \|\uppercase\expandafter{\romannumeral1}_2\|_{L^2(\Omega)}\leq
    &Ch^{2\gamma_1}(\|G_{1,0}\|_{L^2(\Omega)}+t^{\min(0,\alpha_2-\alpha_1)}\|G_{2,0}\|_{L^2(\Omega)}),
    \end{aligned}
    \end{equation*}
    where we have used the fact $T/t\leq1$.
    As for $ \uppercase\expandafter{\romannumeral1}_3 $, similar to Lemma \ref{lemtheestimateofHblabla}, one has
    \begin{equation*}
    \|H(z,A_{i,h},\alpha,\beta)\|_{L^{2}(\Omega)\rightarrow L^{2}(\Omega)}\leq C|z|^{\beta-\alpha}.
    \end{equation*}
    Then the inverse Laplace transform and the $L_2$ stability of projection $P_h$ lead to
    \begin{equation*}
    \|\uppercase\expandafter{\romannumeral1}_3\|_{L^2(\Omega)}\leq C\int_{0}^t(t-s)^{\alpha_2-1}\|e_2(s)\|_{L^2(\Omega)}ds.
    \end{equation*}
    Thus
    \begin{equation*}
    \begin{aligned}
    \|e_1(t)\|_{L^2(\Omega)}\leq & Ch^{2\gamma_1}(t^{-\alpha_1}\|G_{1,0}\|_{L^2(\Omega)}+t^{\min(0,\alpha_2-\alpha_1)}\|G_{2,0}\|_{L^2(\Omega)})
    \\
    &+C\int_{0}^t(t-s)^{\alpha_2-1}\|e_2(s)\|_{L^2(\Omega)}ds.
    \end{aligned}
    \end{equation*}
    Similarly, there also exists
        \begin{equation*}
    \begin{aligned}
    \|e_2(t)\|_{L^2(\Omega)}\leq& Ch^{2\gamma_2}(t^{-\alpha_2}\|G_{2,0}\|_{L^2(\Omega)}+t^{\min(0,\alpha_1-\alpha_2)}\|G_{1,0}\|_{L^2(\Omega)})
        \\&+C\int_{0}^t(t-s)^{\alpha_1-1}\|e_1(s)\|_{L^2(\Omega)}ds.
    \end{aligned}
    \end{equation*}
Thus, the desired estimates can be obtained by Lemma \ref{lemgrondwall} and the fact $T/t>1$.
\end{proof}

Finally, combining the proof of Theorem \ref{thmnonsmoothdatasemi}, the priori estimate provided in Section \ref{Sec1}, and Lemma \ref{lemeroper2}, there are the following spatial error estimates for $s_1<1/2$. Theorems \ref{thmsmoothdatasemi1}, \ref{thmsmoothdatasemi2}, and \ref{thmsmoothdatasemi3}, are with different assumptions on the regularities of the initial values and/or the range of $s_2$.
\begin{theorem}\label{thmsmoothdatasemi1}
     Let $G_{1}$, $G_{2}$ and $G_{1,h}$, $G_{2,h}$ be the solutions of the systems \eqref{equrqtosol} and \eqref{equequsysspatialsemiAh}, respectively. Assume $s_1\leq s_2<1/2$, $G_{1,0},\ G_{2,0}\in \hat{H}^\sigma(\Omega)$, $\sigma< \max(1/2-s_1,1/2-s_2)$ and $G_{1,h}(0)=P_hG_{1,0}$, $G_{2,h}(0)=P_hG_{2,0}$. Then
    \begin{equation*}
    \begin{aligned}
    &\|G_1(t)-G_{1,h}(t)\|_{L^2(\Omega)}
    \\
    &
    \leq Ch^{s_1+\gamma_1}\left(t^{-\alpha_1}\|G_{1,0}\|_{\hat{H}^\sigma(\Omega)}+t^{\min(0,\alpha_2-\alpha_1)}\|G_{2,0}\|_{\hat{H}^\sigma(\Omega)}\right)\\
    &+Ch^{s_2+\gamma_2}\left(\|G_{1,0}\|_{\hat{H}^\sigma(\Omega)}+\|G_{2,0}\|_{\hat{H}^\sigma(\Omega)}\right),\\
    & \|G_2(t)-G_{2,h}(t)\|_{L^2(\Omega)}
    \\ &
    \leq
     Ch^{s_1+\gamma_1}\left(\|G_{1,0}\|_{\hat{H}^\sigma(\Omega)}+\|G_{2,0}\|_{\hat{H}^\sigma(\Omega)}\right)\\
    &+Ch^{s_2+\gamma_2}\left(t^{\min(0,\alpha_1-\alpha_2)}\|G_{1,0}\|_{\hat{H}^\sigma(\Omega)}+t^{-\alpha_2}\|G_{2,0}\|_{\hat{H}^\sigma(\Omega)}\right),
    \end{aligned}
    \end{equation*}
    where $\gamma_1\leq \min(s_1+\sigma,1/2-\epsilon)$ and $\gamma_2\leq \min(s_2+\sigma,1/2-\epsilon)$ with $\epsilon>0$ being arbitrarily small.
\end{theorem}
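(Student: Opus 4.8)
The plan is to mimic the argument of Theorem~\ref{thmnonsmoothdatasemi}, but to replace every invocation of Lemma~\ref{lemeroper} by Lemma~\ref{lemeroper2} wherever the initial data (or the coupling term) is only known to lie in $\hat{H}^\sigma(\Omega)$ rather than in $L^2(\Omega)$. Concretely, I would start from the same Laplace-domain representation of the semidiscrete solutions obtained from \eqref{equequsysspatialsemilapform}, subtract it from \eqref{equsolrepinlap0}, and arrive at the decomposition $\tilde e_1=\sum_{i=1}^3 \tilde{\uppercase\expandafter{\romannumeral1}}_i$, $\tilde e_2=\sum_{i=1}^3 \tilde{\uppercase\expandafter{\romannumeral2}}_i$ exactly as in \eqref{equerrorrep}, with $e_1=G_1-G_{1,h}$, $e_2=G_2-G_{2,h}$.

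For the first term $\uppercase\expandafter{\romannumeral1}_1=z^{\alpha_1-1}\big(H(z,A_1,\alpha_1,0)-H(z,A_{1,h},\alpha_1,0)P_h\big)G_{1,0}$, since $G_{1,0}\in\hat H^\sigma(\Omega)$ with $\sigma<1/2-s_1$, Lemma~\ref{lemeroper2} gives the spatial factor $Ch^{2s_1+\sigma}\|G_{1,0}\|_{\hat H^\sigma(\Omega)}=Ch^{s_1+\gamma_1}\|G_{1,0}\|_{\hat H^\sigma(\Omega)}$ with $\gamma_1=\min(s_1+\sigma,1/2-\epsilon)$, and then Lemma~\ref{lemestZ} handles $\int_{\Gamma_{\theta,\kappa}}|e^{zt}||z|^{\alpha_1-1}|dz|\le Ct^{-\alpha_1}$ after inverse Laplace transform. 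For $\uppercase\expandafter{\romannumeral1}_2$ one rewrites it using \eqref{equsolrepinlap0} so that the operator error $H(z,A_1,\alpha_1,0)-H(z,A_{1,h},\alpha_1,0)P_h$ acts on $\tilde G_2$; here Lemma~\ref{lemeroper2} again supplies the factor $h^{s_1+\gamma_1}$ while the $\hat H^\sigma$-regularity estimate of $G_2$ from Theorem~\ref{thmhomoregularitysmooth1} bounds the remaining norm, using $T/t>1$ to absorb the positive powers of $t$. The term $\uppercase\expandafter{\romannumeral1}_3$ is purely the coupling remainder $aH(z,A_{1,h},\alpha_1,\alpha_1-\alpha_2)(P_h\tilde G_2-\tilde G_{2,h})$; the discrete analogue of Lemma~\ref{lemtheestimateofHblabla} ($\|H(z,A_{i,h},\alpha,\beta)\|_{L^2\to L^2}\le C|z|^{\beta-\alpha}$), $L^2$-stability of $P_h$, and inverse Laplace transform reduce it to $C\int_0^t(t-s)^{\alpha_2-1}\|e_2(s)\|_{L^2(\Omega)}\,ds$. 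Collecting these, and doing the symmetric computation for $e_2$ (with $h^{s_2+\gamma_2}$, $t^{-\alpha_2}$, and the convolution kernel $(t-s)^{\alpha_1-1}$), yields the coupled system of integral inequalities
\begin{equation*}
\begin{aligned}
\|e_1(t)\|_{L^2(\Omega)}&\le Ch^{s_1+\gamma_1}\big(t^{-\alpha_1}\|G_{1,0}\|_{\hat H^\sigma}+t^{\min(0,\alpha_2-\alpha_1)}\|G_{2,0}\|_{\hat H^\sigma}\big)\\
&\quad+Ch^{s_2+\gamma_2}\big(\|G_{1,0}\|_{\hat H^\sigma}+\|G_{2,0}\|_{\hat H^\sigma}\big)+C\int_0^t(t-s)^{\alpha_2-1}\|e_2(s)\|_{L^2(\Omega)}\,ds,
\end{aligned}
\end{equation*}
and the analogous bound for $\|e_2(t)\|_{L^2(\Omega)}$ with indices $1$ and $2$ interchanged.

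Finally I would close the coupling: substitute the $e_2$-inequality into the $e_1$-inequality (and vice versa), so each unknown appears alone under a convolution kernel of order $(t-s)^{\alpha_1+\alpha_2-1}$, then apply the generalized Gr\"onwall inequality Lemma~\ref{lemgrondwall} together with $T/t>1$ to obtain the stated estimates. The main obstacle is bookkeeping rather than conceptual: one must check that the hypothesis $\sigma<\max(1/2-s_1,1/2-s_2)$ is exactly what is needed for Lemma~\ref{lemeroper2} (which requires $\sigma<1/2-s$) to be applicable to each of the two components—so when $\sigma\ge 1/2-s_1$ but $\sigma<1/2-s_2$ one must instead invoke Lemma~\ref{lemeroper} (the $\gamma\le\min(s,1/2-\epsilon)$ version) for the $s_1$-component and track the resulting exponent $\gamma_1\le\min(s_1+\sigma,1/2-\epsilon)$ carefully—and that the temporal singularities coming from the regularity theorem (the $t^{-\alpha_i}$ factors) remain integrable against the convolution kernels so that Lemma~\ref{lemgrondwall} applies with the right $a_k$, $\alpha_k$. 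Apart from verifying these index inequalities, the argument is a routine adaptation of Theorem~\ref{thmnonsmoothdatasemi}.
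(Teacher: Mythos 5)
Your plan coincides with the paper's own (only sketched) proof: the paper states that Theorem \ref{thmsmoothdatasemi1} follows by combining the argument of Theorem \ref{thmnonsmoothdatasemi} with the a priori estimates of Section \ref{Sec1} and Lemma \ref{lemeroper2}, which is exactly the substitution and Gr\"onwall closure you describe. One correction to your final bookkeeping: since $s_1\leq s_2$, the hypothesis $\sigma<\max(1/2-s_1,1/2-s_2)=1/2-s_1$ guarantees Lemma \ref{lemeroper2} always applies to the $s_1$-component, and the case needing care is the $s_2$-component when $\sigma\geq 1/2-s_2$ (your stated scenario $\sigma\geq 1/2-s_1$ with $\sigma<1/2-s_2$ is vacuous); moreover, falling back on Lemma \ref{lemeroper} there would only give $h^{2s_2}$, which is weaker than the claimed $h^{s_2+\gamma_2}=h^{s_2+1/2-\epsilon}$ --- instead one should apply Lemma \ref{lemeroper2} after the embedding $\hat H^{\sigma}(\Omega)\hookrightarrow \hat H^{1/2-s_2-\epsilon}(\Omega)$.
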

\begin{theorem}\label{thmsmoothdatasemi2}
    Let $G_{1}$, $G_{2}$ and $G_{1,h}$, $G_{2,h}$ be the solutions of the systems \eqref{equrqtosol} and \eqref{equequsysspatialsemiAh}, respectively.  Assume $s_1\leq s_2<1/2$, $G_{i,0}\in \hat{H}^{\sigma_i}(\Omega)$, $\sigma_i<1/2-\epsilon$ $(i=1,2)$ and $G_{1,h}(0)=P_hG_{1,0}$, $G_{2,h}(0)=P_hG_{2,0}$. Denote $\mu_1=\max(\frac{\alpha_1-\alpha_2}{\alpha_1}+\epsilon,0)$, $\mu_2=\max(\frac{\alpha_2-\alpha_1}{\alpha_2}+\epsilon,0)$ and $\bar{\gamma}_i=\min(1/2-\epsilon,s_i+\sigma_i)$ $(i=1,2)$.
    \begin{itemize}
    \item If $\sigma_1+2\mu_1s_1\leq2s_2+\sigma_2$ and $\sigma_2+2\mu_2s_2\leq2s_1+\sigma_1$, then
    \begin{equation*}
    \begin{aligned}
   & \|G_1(t)-G_{1,h}(t)\|_{L^2(\Omega)}
    \\
    &
    \leq Ch^{s_1+\gamma_1}\left(t^{-\alpha_1}\|G_{1,0}\|_{\hat{H}^{\sigma_1}(\Omega)}+t^{\min(0,\alpha_2-\alpha_1)}\|G_{2,0}\|_{\hat{H}^{\sigma_2}(\Omega)}\right)\\
    &+Ch^{s_2+\gamma_2}\left(\|G_{1,0}\|_{\hat{H}^{\sigma_1}(\Omega)}+\|G_{2,0}\|_{\hat{H}^{\sigma_2}(\Omega)}\right),\\
    & \|G_2(t)-G_{2,h}(t)\|_{L^2(\Omega)}\leq\\
    & Ch^{s_1+\gamma_1}\left(\|G_{1,0}\|_{\hat{H}^{\sigma_1}(\Omega)}+\|G_{2,0}\|_{\hat{H}^{\sigma_2}(\Omega)}\right)\\
    &+Ch^{s_2+\gamma_2}\left(t^{\min(0,\alpha_1-\alpha_2)}\|G_{1,0}\|_{\hat{H}^{\sigma_1}(\Omega)}+t^{-\alpha_2}\|G_{2,0}\|_{\hat{H}^{\sigma_2}(\Omega)}\right).\\
    \end{aligned}
    \end{equation*}
    \item Assume $\sigma_1>\sigma_2$. If $\sigma_1+2\mu_1s_1>2s_2+\sigma_2$ or $\sigma_2+2\mu_2s_2>2s_1+\sigma_1$, then
    \begin{equation*}
    \begin{aligned}
    & \|G_1(t)-G_{1,h}(t)\|_{L^2(\Omega)}
    \\
    &
    \leq Ch^{s_1+\gamma_1}\left(t^{-\alpha_1}\|G_{1,0}\|_{\hat{H}^{\sigma_1}(\Omega)}+t^{\min(0,\alpha_2-\alpha_1)}\|G_{2,0}\|_{\hat{H}^{\sigma_2}(\Omega)}\right)\\
    &+Ch^{s_2+\bar{\gamma}_2}\left(\|G_{1,0}\|_{\hat{H}^{\sigma_1}(\Omega)}+\|G_{2,0}\|_{\hat{H}^{\sigma_2}(\Omega)}\right),\\
    & \|G_2(t)-G_{2,h}(t)\|_{L^2(\Omega)}\\
    &\leq Ch^{s_1+\gamma_1}\left(\|G_{1,0}\|_{\hat{H}^{\sigma_1}(\Omega)}+\|G_{2,0}\|_{\hat{H}^{\sigma_2}(\Omega)}\right)\\
    &+Ch^{s_2+\bar{\gamma}_2}\left(t^{\min(0,\alpha_1-\alpha_2)}\|G_{1,0}\|_{\hat{H}^{\sigma_1}(\Omega)}+t^{-\alpha_2}\|G_{2,0}\|_{\hat{H}^{\sigma_2}(\Omega)}\right).\\
    \end{aligned}
    \end{equation*}
    \item Assume $\sigma_1<\sigma_2$. If $\sigma_1+2\mu_1s_1>2s_2+\sigma_2$ or $\sigma_2+2\mu_2s_2>2s_1+\sigma_1$, then
    \begin{equation*}
    \begin{aligned}
   & \|G_1(t)-G_{1,h}(t)\|_{L^2(\Omega)}
    \\
    &
    \leq Ch^{s_1+\bar{\gamma}_1}\left(t^{-\alpha_1}\|G_{1,0}\|_{\hat{H}^{\sigma_1}(\Omega)}+t^{\min(0,\alpha_2-\alpha_1)}\|G_{2,0}\|_{\hat{H}^{\sigma_2}(\Omega)}\right)\\
    &+Ch^{s_2+\gamma_2}\left(\|G_{1,0}\|_{\hat{H}^{\sigma_1}(\Omega)}+\|G_{2,0}\|_{\hat{H}^{\sigma_2}(\Omega)}\right),\\
   & \|G_2(t)-G_{2,h}(t)\|_{L^2(\Omega)}
   \\
    &\leq Ch^{s_1+\bar{\gamma}_1}\left(\|G_{1,0}\|_{\hat{H}^{\sigma_1}(\Omega)}+\|G_{2,0}\|_{\hat{H}^{\sigma_2}(\Omega)}\right)\\
    &+Ch^{s_2+\gamma_2}\left(t^{\min(0,\alpha_1-\alpha_2)}\|G_{1,0}\|_{\hat{H}^{\sigma_1}(\Omega)}+t^{-\alpha_2}\|G_{2,0}\|_{\hat{H}^{\sigma_2}(\Omega)}\right).\\
    \end{aligned}
    \end{equation*}
    \end{itemize}
    Here $\gamma_1= \min(s_1+\sigma_1,s_1+s_2+\bar{\gamma}_2-2\mu_1s_1,1/2-\epsilon)$ and $\gamma_2= \min(s_2+\sigma_2,s_2+s_1+\bar{\gamma}_1-2\mu_2s_2,1/2-\epsilon)$ with $\epsilon>0$ being arbitrarily small.
\end{theorem}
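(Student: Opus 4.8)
The plan is to run the contour-integral argument of Theorem~\ref{thmnonsmoothdatasemi} in fractional-order spaces, replacing the $L^2$-based resolvent error bound of Lemma~\ref{lemeroper} by the $\hat{H}^\sigma$-based bound of Lemma~\ref{lemeroper2} (and interpolating between the two whenever $\sigma$ is close to the $1/2-\epsilon$ barrier), and feeding in the a~priori regularity of $G_1$, $G_2$ supplied by Theorems~\ref{thmhomoregularitysmooth1} and~\ref{thmhomoregularitysmooth2}. First I would pass to the Laplace domain: from \eqref{equequsysspatialsemilapform} one gets the discrete analogue of the representation \eqref{equsolrepinlap0}, and subtracting it from \eqref{equsolrepinlap0} produces the same three-term splitting $\tilde{e}_1=\tilde{\mathrm{I}}_1+\tilde{\mathrm{I}}_2+\tilde{\mathrm{I}}_3$, $\tilde{e}_2=\tilde{\mathrm{II}}_1+\tilde{\mathrm{II}}_2+\tilde{\mathrm{II}}_3$ displayed in \eqref{equerrorrep}, in which $\mathrm{I}_1$ is the finite element resolvent error acting on the datum $G_{1,0}$, $\mathrm{I}_2$ is the resolvent error acting on $\tilde{G}_2$, and $\mathrm{I}_3=aH(z,A_{1,h},\alpha_1,\alpha_1-\alpha_2)P_h\tilde{e}_2$ is the coupling term; the estimate for $\tilde{e}_2$ has the same structure with the roles of the indices $1$ and $2$ interchanged.

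For $\mathrm{I}_1$ I would use Lemma~\ref{lemeroper2} with $v=G_{1,0}\in\hat{H}^{\sigma_1}(\Omega)$ (interpolated with Lemma~\ref{lemeroper} once $\sigma_1>1/2-s_1-\epsilon$) to get $\|\mathrm{I}_1\|_{L^2(\Omega)}\le Ch^{s_1+\gamma_1}|z|^{\alpha_1-1}\|G_{1,0}\|_{\hat{H}^{\sigma_1}(\Omega)}$ in the transform variable, then integrate over $\Gamma_{\theta,\kappa}$ and apply Lemma~\ref{lemestZ} to produce the $t^{-\alpha_1}$ factor; the cap $\gamma_1\le 1/2-\epsilon$ is precisely the saturation of the piecewise linear space $X_h$ against the $\hat{H}^{s_1+1/2-\epsilon}$ elliptic regularity of $A_1$ from Theorem~\ref{thmregdiri}. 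The term $\mathrm{I}_3$ is handled exactly as in Theorem~\ref{thmnonsmoothdatasemi}: the discrete resolvent bound $\|H(z,A_{1,h},\alpha_1,\beta)\|_{L^2(\Omega)\to L^2(\Omega)}\le C|z|^{\beta-\alpha_1}$ (valid for the self-adjoint positive semidefinite operator $A_{1,h}$ on $X_h$), the $L^2$-stability of $P_h$, and the inverse transform give the Volterra feedback $\|\mathrm{I}_3\|_{L^2(\Omega)}\le C\int_0^t(t-s)^{\alpha_2-1}\|e_2(s)\|_{L^2(\Omega)}\,ds$.

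The substantive step is $\mathrm{I}_2=a\bigl(H(z,A_1,\alpha_1,0)-H(z,A_{1,h},\alpha_1,0)P_h\bigr)\bigl(z^{\alpha_1-\alpha_2}\tilde{G}_2\bigr)$. I would first rewrite $z^{\alpha_1-\alpha_2}\tilde{G}_2=H(z,A_2,\alpha_2,\alpha_1-1)G_{2,0}+aH(z,A_2,\alpha_2,0)\tilde{G}_1$ via \eqref{equsolrepinlap0}, absorbing the scalar $z^{\alpha_1-\alpha_2}$ into the $A_2$-resolvents, then measure its spatial regularity with Lemma~\ref{lemtheestimateofHblablainH} and the elliptic regularity of $A_2$: one may lift $\tilde{G}_2$ by up to $2\mu_1 s_1$ additional derivatives at the cost of a $|z|$-weight with exponent $(\alpha_1-\alpha_2)-\mu_1\alpha_1$, and the choice $\mu_1=\max(\frac{\alpha_1-\alpha_2}{\alpha_1}+\epsilon,0)$ is exactly what keeps that exponent negative, so that after Lemma~\ref{lemestZ} the resulting time-convolution kernel is locally integrable. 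When $\sigma_1+2\mu_1 s_1\le 2s_2+\sigma_2$ this lift is admissible, Lemma~\ref{lemeroper2} applies with $\sigma=\sigma_1+2\mu_1 s_1$, and combining with Lemma~\ref{lemestZ} and the time-domain bounds of Theorem~\ref{thmhomoregularitysmooth2} yields the $h$-power $h^{s_1+\gamma_1}$ with $\gamma_1$ possibly capped by $s_1+s_2+\bar{\gamma}_2-2\mu_1 s_1$; the mirror estimate for $\mathrm{II}_2$ gives $\gamma_2$. When instead $\sigma_1+2\mu_1 s_1>2s_2+\sigma_2$ (or the symmetric inequality) the $A_2$-lift is no longer enough, and I would fall back to the weaker a~priori estimate of Theorem~\ref{thmhomoregularitysmooth1}, which only places $G_2$ in $\hat{H}^{s_2+\bar{\gamma}_2}(\Omega)$ but with the possibly singular factor $t^{\min(0,\alpha_1-\alpha_2)}$; the sign of $\sigma_1-\sigma_2$ then dictates whether $G_1$ keeps the optimal spatial rate while $G_2$ inherits the (time) loss, or vice versa, which is exactly the trichotomy in the statement. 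Assembling the three pieces gives, for $i=1,2$, an inequality $\|e_i(t)\|_{L^2(\Omega)}\le R_i(t)+C\int_0^t(t-s)^{\alpha_{3-i}-1}\|e_{3-i}(s)\|_{L^2(\Omega)}\,ds$ with $R_i$ the claimed bound, and one closes by substituting one inequality into the other and invoking the generalized Gr\"{o}nwall inequality of Lemma~\ref{lemgrondwall} together with $T/t>1$.

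I expect the main obstacle to be the bookkeeping concentrated in $\mathrm{I}_2$ and $\mathrm{II}_2$: in each regime one must choose the lifting exponents $\mu_1$, $\mu_2$ and the target smoothness of $\tilde{G}_2$, $\tilde{G}_1$ so as to simultaneously (i) respect the hypothesis $\sigma<1/2-s_i$ of Lemma~\ref{lemeroper2} (or interpolate past it), (ii) keep the relevant $|z|$-exponent on $\Gamma_{\theta,\kappa}$ negative so that Lemma~\ref{lemestZ} delivers at worst a $t^{-\alpha}$-type singularity and an integrable convolution kernel, and (iii) verify that the power of $h$ produced really is $h^{s_i+\gamma_i}$ with the $\gamma_i$ of the statement, i.e.\ that the minimum defining $\gamma_i$ is attained by the term being estimated. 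A subsidiary technical point is that Lemma~\ref{lemeroper2} is stated only for $\sigma<1/2-s$, whereas $\tilde{G}_{3-i}$ may be as smooth as the $1/2-\epsilon$ barrier allows; this gap is bridged by interpolating between Lemmas~\ref{lemeroper} and~\ref{lemeroper2}, exactly as in the closing line of the proof of Lemma~\ref{lemeroper2}.
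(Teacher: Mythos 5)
Your proposal is correct and coincides with the paper's approach: the paper gives no detailed proof of this theorem, stating only that it follows by combining the error decomposition \eqref{equerrorrep} from the proof of Theorem~\ref{thmnonsmoothdatasemi} with Lemma~\ref{lemeroper2} and the a priori estimates of Section~\ref{Sec1}, which is exactly the argument you lay out (in rather more detail than the authors do). In particular, your treatment of the term $\mathrm{I}_2$ --- rewriting $z^{\alpha_1-\alpha_2}\tilde{G}_2$ via \eqref{equsolrepinlap0}, lifting by $2\mu_1 s_1$ derivatives at the cost of the $|z|$-weight with exponent $(\mu_1-1)\alpha_1+\alpha_2$, and letting the sign of $\sigma_1-\sigma_2$ decide which of Theorems~\ref{thmhomoregularitysmooth1} and~\ref{thmhomoregularitysmooth2} to invoke --- reproduces the mechanism of the paper's proof of Theorem~\ref{thmhomoregularitysmooth2}, and the closing step via Lemma~\ref{lemgrondwall} is the same.
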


\begin{theorem}\label{thmsmoothdatasemi3}
    Let $G_{1}$, $G_{2}$ and $G_{1,h}$, $G_{2,h}$ be the solutions of the systems \eqref{equrqtosol} and \eqref{equequsysspatialsemiAh}, respectively. Assume $s_1<1/2\leq s_2$, $G_{i,0}\in \hat{H}^{\sigma_i}(\Omega)$ $(i=1,2)$, $\sigma_1<1/2-s_1$, $\sigma_2=0$ and $G_{1,h}(0)=P_hG_{1,0}$, $G_{2,h}(0)=P_hG_{2,0}$. Then
    \begin{equation*}
    \begin{aligned}
    & \|G_1(t)-G_{1,h}(t)\|_{L^2(\Omega)}
    \\
    &
    \leq Ch^{s_1+\gamma_1}\left(t^{-\alpha_1}\|G_{1,0}\|_{\hat{H}^{\sigma_1}(\Omega)}+t^{\min(0,\alpha_2-\alpha_1)}\|G_{2,0}\|_{\hat{H}^{\sigma_2}(\Omega)}\right)\\
    &+Ch^{2\gamma_2}\left(\|G_{1,0}\|_{\hat{H}^{\sigma_1}(\Omega)}+\|G_{2,0}\|_{\hat{H}^{\sigma_2}(\Omega)}\right),\\
    & \|G_2(t)-G_{2,h}(t)\|_{L^2(\Omega)}
    \\
    &
    \leq Ch^{s_1+\gamma_1}\left(\|G_{1,0}\|_{\hat{H}^{\sigma_1}(\Omega)}+\|G_{2,0}\|_{\hat{H}^{\sigma_2}(\Omega)}\right)\\
    &+Ch^{2\gamma_2}\left(t^{\min(0,\alpha_1-\alpha_2)}\|G_{1,0}\|_{\hat{H}^{\sigma_1}(\Omega)}+t^{-\alpha_2}\|G_{2,0}\|_{\hat{H}^{\sigma_2}(\Omega)}\right),\\
    \end{aligned}
    \end{equation*}
    where $\gamma_i\leq \min(s_i+\sigma_i,1/2-\epsilon)$, $i=1,2$ with $\epsilon>0$ arbitrary small.
\end{theorem}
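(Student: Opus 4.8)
The plan is to follow, almost verbatim, the scheme of the proof of Theorem~\ref{thmnonsmoothdatasemi}; the only genuinely new ingredient is that the component carrying the small exponent $s_1<1/2$ is now handled with the refined finite element estimate of Lemma~\ref{lemeroper2} instead of Lemma~\ref{lemeroper}. Concretely, I would start from the Laplace-domain error representation \eqref{equerrorrep}, splitting each of $\tilde e_1=\tilde G_1-\tilde G_{1,h}$ and $\tilde e_2=\tilde G_2-\tilde G_{2,h}$ into a data piece, a coupling piece built from the continuous solution of the other equation, and a residual piece carrying the discrete solution of the other equation. Each piece is estimated on the contour $\Gamma_{\theta,\kappa}$, inverted with the kernel bound of Lemma~\ref{lemestZ}, and the two resulting coupled Volterra inequalities for $\|e_1(t)\|_{L^2(\Omega)}$ and $\|e_2(t)\|_{L^2(\Omega)}$ are closed by the generalized Gr\"onwall inequality of Lemma~\ref{lemgrondwall}.

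For the first equation: the data piece is proportional to $G_{1,0}\in\hat H^{\sigma_1}(\Omega)$ with $\sigma_1<1/2-s_1$ and involves only $A_1=(-\Delta)^{s_1}$, so Lemma~\ref{lemeroper2} gives the $z$-uniform spatial factor $h^{2s_1+\sigma_1}\le h^{s_1+\gamma_1}$, and combining with $\int_{\Gamma_{\theta,\kappa}}|e^{zt}||z|^{\alpha_1-1}|dz|\le Ct^{-\alpha_1}$ yields the $Ch^{s_1+\gamma_1}t^{-\alpha_1}\|G_{1,0}\|_{\hat H^{\sigma_1}(\Omega)}$ contribution. The coupling piece $az^{\alpha_1-\alpha_2}\bigl(H(z,A_1,\alpha_1,0)-H(z,A_{1,h},\alpha_1,0)P_h\bigr)\tilde G_2$ again only feels $A_1$; since by Theorem~\ref{thmhomoregularitysmooth3} one has $G_2(t)\in\hat H^{s_2+\gamma_2}(\Omega)$ with $s_2+\gamma_2\ge 1/2$, which embeds into $\hat H^{\sigma}(\Omega)$ for every $\sigma<1/2-s_1$, Lemma~\ref{lemeroper2} is still applicable and again produces the rate $h^{s_1+\gamma_1}$; the time weight $t^{\min(0,\alpha_2-\alpha_1)}$ is recovered either directly from Lemma~\ref{lemestZ} or, when $\alpha_1<\alpha_2$ makes the naive kernel $z^{\alpha_1-\alpha_2}$ too singular, after rewriting $z^{\alpha_1-\alpha_2}\tilde G_2$ through \eqref{equsolrepinlap0} as $H(z,A_2,\alpha_2,\alpha_1-1)G_{2,0}+aH(z,A_2,\alpha_2,0)\tilde G_1$ and using Lemma~\ref{lemtheestimateofHblabla}. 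The residual piece, after the discrete resolvent bound $\|H(z,A_{1,h},\alpha,\beta)\|_{L^2(\Omega)\to L^2(\Omega)}\le C|z|^{\beta-\alpha}$, the $L^2$-stability of $P_h$, and the $\hat H^{s_2+\gamma_2}$-regularity of $G_2$, contributes a higher-order projection remainder of order $h^{s_1+\gamma_1}$ plus the Volterra term $C\int_0^t(t-s)^{\alpha_2-1}\|e_2(s)\|_{L^2(\Omega)}ds$.

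For the second equation the strategy is identical, but now $A_2=(-\Delta)^{s_2}$ with $s_2\ge 1/2$, so Lemma~\ref{lemeroper2} is unavailable and only Lemma~\ref{lemeroper} applies, with $\gamma_2\le\min(s_2,1/2-\epsilon)=1/2-\epsilon$, which is precisely why every second-equation piece comes out with the coarser spatial factor $h^{2\gamma_2}$; here the $L^2$-regularity $\|G_1(t)\|_{L^2(\Omega)}\le C\|G_{1,0}\|_{\hat H^{\sigma_1}(\Omega)}+C\|G_{2,0}\|_{\hat H^{\sigma_2}(\Omega)}$ from Theorem~\ref{thmhomoregularitynonsmooth} suffices. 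The time weight $t^{\min(0,\alpha_1-\alpha_2)}$ on the $\|G_{1,0}\|$-term is obtained by rewriting $z^{\alpha_2-\alpha_1}\tilde G_1$ via \eqref{equsolrepinlap0}, and $t^{-\alpha_2}$ on the $\|G_{2,0}\|$-term from $\int_{\Gamma_{\theta,\kappa}}|e^{zt}||z|^{\alpha_2-1}|dz|\le Ct^{-\alpha_2}$. Collecting everything and substituting one Volterra inequality into the other produces a single inequality of the form $\|e_1(t)\|_{L^2(\Omega)}\le(\text{source})+C\int_0^t(t-s)^{\alpha_1+\alpha_2-1}\|e_1(s)\|_{L^2(\Omega)}ds$ (and symmetrically for $e_2$), whose source is a finite sum of powers $t^{-1+\rho}$ with $\rho>0$; Lemma~\ref{lemgrondwall} together with $T/t>1$ then delivers the two asserted estimates.

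The step I expect to be the main obstacle is the careful bookkeeping of the two different spatial orders through the coupled system: one must check that feeding the highly regular but only $h^{2\gamma_2}$-accurate second component into the $A_1$-block does not degrade the sharp $h^{s_1+\gamma_1}$ rate there, that the first component propagates into the $A_2$-block only at the $h^{2\gamma_2}$ level, and---most delicate technically---that every cross-convolution has a locally integrable kernel, which for $\alpha_1\ne\alpha_2$ is true only after the $H$-function algebra substitutions via \eqref{equsolrepinlap0}, not from a naive estimate of $z^{\pm(\alpha_1-\alpha_2)}$. Once these exponent checks and the admissibility $s_2+\gamma_2\ge 1/2>\sigma$ needed to invoke Lemma~\ref{lemeroper2} on $\tilde G_2$ are verified, the remaining computation is the routine one already carried out in Theorem~\ref{thmnonsmoothdatasemi}.
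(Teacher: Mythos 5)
Your proposal matches the paper's intended argument: the paper gives no detailed proof of this theorem, only the preceding remark that it follows by ``combining the proof of Theorem \ref{thmnonsmoothdatasemi}, the priori estimate provided in Section \ref{Sec1}, and Lemma \ref{lemeroper2}'', which is precisely the combination you carry out (the error splitting \eqref{equerrorrep}, Lemma \ref{lemeroper2} on the $A_1$-blocks with $\sigma_1<1/2-s_1$, Lemma \ref{lemeroper} on the $A_2$-blocks since $s_2\geq 1/2$, and closure via Lemma \ref{lemgrondwall}). Two cosmetic quibbles only: the case in which the naive kernel $z^{\alpha_1-\alpha_2}$ fails to be locally integrable after inversion is $\alpha_1>\alpha_2$, not $\alpha_1<\alpha_2$; and in the residual piece $P_h\tilde G_2-\tilde G_{2,h}=P_h\tilde e_2$, so no separate projection remainder actually arises.
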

\begin{remark}
 From the numerical experiments, we find that the errors aroused by $aH(z,A_1,\alpha_1,\alpha_1-\alpha_2)P_h\tilde{G}_2-aH(z,A_{1,h},\alpha_1,\alpha_1-\alpha_2)\tilde{G}_{2,h}$ and $aH(z,A_2,\alpha_2,\alpha_2-\alpha_1)P_h\tilde{G}_1-aH(z,A_{2,h},\alpha_2,\alpha_2-\alpha_1)\tilde{G}_{1,h}$ in \eqref{equerrorrep} have almost no effect on convergence rates.
\end{remark}
\begin{remark}
As for Eq. \eqref{tramod}, the spatial semidiscrete scheme can be written as
\begin{equation*}
\left (\frac{\partial G_h}{\partial t},v_{h}\right )+~_0D^{1-\alpha}_t\langle G_{h}, v_{h}\rangle_{s}=0,
\end{equation*}
where $v_h\in X_h$ and $G_h(0)=P_h G_0$. According to Lemma \ref{lemeroper2}, if $s<1/2$ and $G_0\in \hat{H}^\sigma(\Omega)$, the error between $G(t)$ and $G_h(t)$ can be written as
\begin{equation*}
\|G(t)-G_h(t)\|_{L^2(\Omega)}+h^s\|G(t)-G_h(t)\|_{\hat{H}^s(\Omega)}\leq C t^{-\alpha}h^{s+\gamma}\|G_0\|_{\hat{H}^{\sigma}(\Omega)},
\end{equation*}
where $\gamma=\min(1/2-\epsilon,s+\sigma)$. And according to Lemma \ref{lemeroper}, if $s\geq1/2$ and $G_0\in L^2(\Omega)$, the error between $G(t)$ and $G_h(t)$ is as follows
\begin{equation*}
\|G(t)-G_h(t)\|_{L^2(\Omega)}+h^s\|G(t)-G_h(t)\|_{\hat{H}^s(\Omega)}\leq C t^{-\alpha}h\|G_0\|_{L^2(\Omega)}.
\end{equation*}
\end{remark}

\section{Time discretization and error analysis}\label{Sec3}
In this section, we use the $L_1$ scheme to discretize the Riemann-Liouville time fractional derivatives and perform the error analysis for the fully discrete scheme.
We first introduce the notations as
\begin{equation}\label{defH1H2}
\begin{aligned}
&H_1(z_1,z_2,A_1,A_2)=((1+az_1+z_1A_1)(1+az_2+z_2A_2)-a^2z_1z_2)^{-1},\\
&H_2(z_1,z_2,A_1,A_2)=H_1(z_1,z_2,A_1,A_2)(1+az_1+z_1A_1).\\
\end{aligned}
\end{equation}
\begin{lemma}[\cite{Nie2018}]\label{lemtheestimateofH1H2}
    When $z\in\Sigma_{\theta,\kappa}$, $ \pi/2<\theta<\pi $ and $\kappa>\max\left (2|a|^{1/\alpha},2|a|^{1/\beta}\right )$, there are the estimates
    \begin{equation*}
    \begin{aligned}
    &\| H_1(z^{-\alpha},z^{-\beta},A_1,A_2)\|\leq C,\  \| H_2(z^{-\alpha},z^{-\beta},A_1,A_2)\|\leq C.
    \end{aligned}
    \end{equation*}

\end{lemma}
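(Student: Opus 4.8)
The plan is to peel the $2\times2$ operator pencil that defines $H_1$ into a product of boundedly invertible factors and then invoke the scalar resolvent bound of Lemma \ref{lemtheestimateofHblabla}. Put $z_1=z^{-\alpha}$, $z_2=z^{-\beta}$ and set
\begin{equation*}
B_1=I+az_1+z_1A_1=z^{-\alpha}\bigl(z^{\alpha}+a+A_1\bigr),\qquad B_2=I+az_2+z_2A_2=z^{-\beta}\bigl(z^{\beta}+a+A_2\bigr),
\end{equation*}
so that, in the notation of \eqref{equdefHz}, $B_1^{-1}=z^{\alpha}H(z,A_1,\alpha,0)$ and $B_2^{-1}=z^{\beta}H(z,A_2,\beta,0)$. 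The hypotheses $\pi/2<\theta<\pi$ and $\kappa>\max(2|a|^{1/\alpha},2|a|^{1/\beta})$ guarantee that $z^{\alpha}+a$ and $z^{\beta}+a$ stay inside a fixed sector $\Sigma_{\theta'}$ with $\theta'<\pi$ and of modulus comparable to $|z|^{\alpha}$, resp. $|z|^{\beta}$; hence Lemma \ref{lemtheestimateofHblabla} (used with exponents $\alpha$ and $\beta$, and valid for $A_i=(-\Delta)^{s_i}$ as well as for the discrete operators $A_{i,h}$) gives $\|H(z,A_1,\alpha,0)\|_{L^2(\Omega)\to L^2(\Omega)}\le C|z|^{-\alpha}$ and $\|H(z,A_2,\beta,0)\|_{L^2(\Omega)\to L^2(\Omega)}\le C|z|^{-\beta}$. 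Consequently $B_1,B_2$ are invertible on $L^2(\Omega)$ with $\|B_1^{-1}\|,\|B_2^{-1}\|\le C$ uniformly for $z\in\Sigma_{\theta,\kappa}$.

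Next I would factor the pencil. A direct multiplication verifies
\begin{equation*}
(1+az_1+z_1A_1)(1+az_2+z_2A_2)-a^2z_1z_2\,I=B_1\bigl(I-a^2z_1z_2\,B_1^{-1}B_2^{-1}\bigr)B_2 .
\end{equation*}
Since $|a^2z_1z_2|=a^2|z|^{-\alpha-\beta}$ and $\|B_1^{-1}B_2^{-1}\|\le\|B_1^{-1}\|\,\|B_2^{-1}\|\le C$, the operator $a^2z_1z_2B_1^{-1}B_2^{-1}$ has norm $\le Ca^2|z|^{-\alpha-\beta}\le 1/2$ once $|z|\ge\kappa$ with $\kappa$ large enough, so $I-a^2z_1z_2B_1^{-1}B_2^{-1}$ is invertible by a Neumann series with $\|(I-a^2z_1z_2B_1^{-1}B_2^{-1})^{-1}\|\le 2$. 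Therefore the pencil is invertible, and
\begin{equation*}
H_1(z^{-\alpha},z^{-\beta},A_1,A_2)=B_2^{-1}\bigl(I-a^2z_1z_2B_1^{-1}B_2^{-1}\bigr)^{-1}B_1^{-1},
\end{equation*}
which yields $\|H_1(z^{-\alpha},z^{-\beta},A_1,A_2)\|\le\|B_2^{-1}\|\,\|(I-a^2z_1z_2B_1^{-1}B_2^{-1})^{-1}\|\,\|B_1^{-1}\|\le C$.

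For $H_2=H_1(1+az_1+z_1A_1)=H_1B_1$ the rightmost $B_1^{-1}$ in the formula above cancels the extra factor $B_1$, leaving
\begin{equation*}
H_2(z^{-\alpha},z^{-\beta},A_1,A_2)=B_2^{-1}\bigl(I-a^2z_1z_2B_1^{-1}B_2^{-1}\bigr)^{-1},
\end{equation*}
so $\|H_2(z^{-\alpha},z^{-\beta},A_1,A_2)\|\le\|B_2^{-1}\|\,\|(I-a^2z_1z_2B_1^{-1}B_2^{-1})^{-1}\|\le C$. The same argument gives the bound on $\hat H^{\sigma}(\Omega)$ for small $\sigma$ if one replaces Lemma \ref{lemtheestimateofHblabla} by Lemma \ref{lemtheestimateofHblablainH}. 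The one genuinely delicate step is the reduction in the first paragraph: because $A_1$ and $A_2$ do not commute the pencil cannot be diagonalized, so one is forced to use the non-commutative factorization above, and one must check carefully that the constraint on $\theta$ together with $\kappa>\max(2|a|^{1/\alpha},2|a|^{1/\beta})$ really does place $z^{\alpha}+a$ and $z^{\beta}+a$ in a sector on which the resolvent estimate for $(-\Delta)^{s_i}$ of \cite{Acosta20174} (hence Lemma \ref{lemtheestimateofHblabla}) applies; everything after that is routine Neumann-series bookkeeping.
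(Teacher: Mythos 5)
The paper does not actually prove this lemma---it is imported verbatim from \cite{Nie2018}---so there is no in-paper argument to compare against; judged on its own, your proof is correct. The algebraic identity $B_1B_2-a^2z_1z_2I=B_1\bigl(I-a^2z_1z_2B_1^{-1}B_2^{-1}\bigr)B_2$ checks out (multiplying out, the middle term collapses to $-a^2z_1z_2I$ because $B_1$ and $B_2$ cancel from opposite sides, so no commutativity of $A_1$ and $A_2$ is needed), the bounds $\|B_i^{-1}\|\le C$ follow from Lemma \ref{lemtheestimateofHblabla} with $\beta=0$ exactly as you say, and the cancellation $H_2=H_1B_1=B_2^{-1}\bigl(I-a^2z_1z_2B_1^{-1}B_2^{-1}\bigr)^{-1}$ gives the second estimate for free. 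Two small caveats, neither of which is a gap. First, the Neumann-series step needs $Ca^2|z|^{-\alpha-\beta}\le 1/2$, and whether the stated threshold $\kappa>\max\bigl(2|a|^{1/\alpha},2|a|^{1/\beta}\bigr)$ suffices depends on the constant $C$ in the resolvent estimate of \cite{Acosta20174}; strictly speaking you prove the statement for $\kappa$ sufficiently large, which is all the paper ever uses and is the same looseness already present in Lemma \ref{lemtheestimateofHblabla}. Second, $B_1$ contains the unbounded operator $A_1$, so $H_1B_1$ is a priori only defined on the domain of $A_1$; your closed form $B_2^{-1}\bigl(I-a^2z_1z_2B_1^{-1}B_2^{-1}\bigr)^{-1}$ is precisely its bounded extension, and saying so in one sentence would make the argument airtight. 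The remark that everything carries over to the discrete operators $A_{i,h}$ and, via Lemma \ref{lemtheestimateofHblablainH}, to $\hat H^{\sigma}(\Omega)$ is also correct and is needed later in Theorem \ref{thmhomfullest}.
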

According to \eqref{defH1H2}, the solution of \eqref{equequsysspatialsemiAh} in Laplace space can be reconstructed as
\begin{equation}\label{equsemisolrep}
\begin{aligned}
&\tilde{G}_{1,h}=
z^{-1}H_2(z^{-\alpha_2},z^{-\alpha_1},A_{2,h},A_{1,h})G_{1,h}(0)\\
&\quad\quad\quad+aH_1(z^{-\alpha_2},z^{-\alpha_1},A_{2,h},A_{1,h})z^{-1-\alpha_2}G_{2,h}(0),\\
&\tilde{G}_{2,h}=aH_1(z^{-\alpha_1},z^{-\alpha_2},A_{1,h},A_{2,h})z^{-1-\alpha_1}G_{1,h}(0)\\
&\quad\quad\quad+z^{-1}H_2(z^{-\alpha_1},z^{-\alpha_2},A_{1,h},A_{2,h})G_{2,h}(0).
\end{aligned}
\end{equation}
Next, we use the Backward Euler scheme to discretize $\partial /\partial t$ and $L_1$ scheme to approximate $~_0D^{\alpha}_t$. Let the time step size $\tau=T/L$, $L\in\mathbb{N}$, $t_i=i\tau$, $i=0,1,\ldots,L$ and $0=t_0<t_1<\cdots<t_L=T$. Recall the approximation of Caputo fractional derivative by $L_1$ scheme (see, e.g., \cite{Jin2015})
\begin{equation*}
~^C_0D^\alpha_tu(t_n)=\tau^{-\alpha}\left (b^\alpha_0u(t_n)+\sum_{i=1}^{n-1}(b^\alpha_j-b^\alpha_{j-1})u(t_{n-j})-b^\alpha_{n-1}u(t_0)\right )+\mathcal{O}(\tau^{2-\alpha}),
\end{equation*}
where
\begin{equation*}
b^\alpha_j=((j+1)^{1-\alpha}-j^{1-\alpha})/\Gamma(2-\alpha),~j=0,1,\cdots,n-1.
\end{equation*}
Using the relationship between the Caputo fractional derivative and the Riemann-Liouville fractional derivative, i.e.,
\begin{equation*}
~_0D^\alpha_tu(t)=~^C_0D^\alpha_tu(t)+\frac{t^{-\alpha}}{\Gamma(1-\alpha)}u(0),
\end{equation*}
we obtain
\begin{equation*}
~_0D^\alpha_tu(t_n)=\tau^{-\alpha}\sum_{i=0}^{n}d^\alpha_ju(t_{n-j})+\mathcal{O}(\tau^{2-\alpha}),
\end{equation*}
where
\begin{equation}\label{equweightdalpha}
d^\alpha_j=\left\{
\begin{aligned}
&b^\alpha_0\qquad\qquad\qquad\qquad \: \:  {\rm for}~j=0,\\
&b^\alpha_j-b^\alpha_{j-1}\qquad\qquad\quad \: {\rm for}~0<j<n,\\
&b^\alpha_{n-1}+\frac{n^{-\alpha}}{\Gamma(1-\alpha)}\qquad {\rm for}~j=n.
\end{aligned}\right.
\end{equation}
For the system \eqref{equrqtosol}, we have the fully discrete scheme
\begin{equation}\label{equfulldis}
\left \{\begin{aligned}
&\frac{G^n_{1,h}-G^{n-1}_{1,h}}{\tau}+a\tau^{\alpha_1-1}\sum_{i=0}^{n-1}d^{1-\alpha_1}_iG^{n-i}_{1,h}+
\tau^{\alpha_1-1}\sum_{i=0}^{n-1}d^{1-\alpha_1}_iA_{1,h} G^{n-i}_{1,h}
\\
&
\quad\quad=a\tau^{\alpha_2-1}\sum_{i=0}^{n-1}d^{1-\alpha_2}_iG^{n-i}_{2,h},\\
&\frac{G^n_{2,h}-G^{n-1}_{2,h}}{\tau}+a\tau^{\alpha_2-1}\sum_{i=0}^{n-1}d^{1-\alpha_2}_iG^{n-i}_{2,h}+
\tau^{\alpha_2-1}\sum_{i=0}^{n-1}d^{1-\alpha_2}_iA_{2,h} G^{n-i}_{2,h}
\\
&
\quad\quad
=a\tau^{\alpha_1-1}\sum_{i=0}^{n-1}d^{1-\alpha_1}_iG^{n-i}_{1,h},\\
&G^0_{1,h}=G_{1,h}(0),\\
&G^0_{2,h}=G_{2,h}(0),
\end{aligned}\right .
\end{equation}
where $ G^n_{1,h} $, $G^n_{2,h}$ are the numerical solutions of  $ G_{1} $, $G_{2}$ at time $t_n$.

To get the error estimate between \eqref{equrqtosol} and \eqref{equfulldis}, we introduce $Li_p(z)$ \cite{Flajolet1999,Jin2015} defined by
\begin{equation*}
Li_p(z)=\sum_{j=1}^{\infty}\frac{z^j}{j^p},
\end{equation*}
and recall the Lemmas about $Li_p(z)$ .
\begin{lemma}[\cite{Flajolet1999,Jin2015}]
    For $p\neq 1,2,\cdots$, the function $Li_p(e^{-z})$ satisfies the singular expansion
    \begin{equation*}
    Li_p(e^{-z})\sim \Gamma(1-p)z^{p-1}+\sum_{l=0}^{\infty}(-1)^l\varsigma(p-l)\frac{z^l}{l!} \qquad as~z\rightarrow 0,
    \end{equation*}
    where $\varsigma(z)$ denotes the Riemann zeta function.
\end{lemma}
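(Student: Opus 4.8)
The plan is to read the expansion off the Mellin transform of the map $z\mapsto Li_p(e^{-z})=\sum_{j\ge1}e^{-jz}/j^p$. First I would interchange summation and integration, which is legitimate for $\Re s>\max(0,1-\Re p)$ since each term decays exponentially and the resulting series converges absolutely, to obtain
\begin{equation*}
\int_0^\infty z^{s-1}Li_p(e^{-z})\,dz=\sum_{j=1}^\infty\frac{1}{j^p}\int_0^\infty z^{s-1}e^{-jz}\,dz=\Gamma(s)\sum_{j=1}^\infty\frac{1}{j^{s+p}}=\Gamma(s)\varsigma(s+p).
\end{equation*}
By Mellin inversion this gives $Li_p(e^{-z})=\frac{1}{2\pi\mathbf{i}}\int_{c-\mathbf{i}\infty}^{c+\mathbf{i}\infty}\Gamma(s)\varsigma(s+p)z^{-s}\,ds$ for $c$ in the above range, and the singular expansion as $z\to0$ is produced by pushing this vertical contour to the left past the poles of the integrand: by the converse mapping theorem for Mellin asymptotics, the residue of $\Gamma(s)\varsigma(s+p)z^{-s}$ at each pole becomes one monomial of the expansion.

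Next I would locate the poles to the right of a line $\Re s=-N-\tfrac12$. Because $p\notin\{1,2,\dots\}$, the only pole of $\varsigma(s+p)$, sitting at $s=1-p$, is simple, is disjoint from all poles of $\Gamma$, and $\Gamma(1-p)$ is finite there, so it contributes the leading non-integer-power term $\Gamma(1-p)z^{p-1}$. The remaining poles are the simple poles of $\Gamma$ at $s=-l$, $l=0,1,2,\dots$, where the residue of $\Gamma$ is $(-1)^l/l!$ and $\varsigma(s+p)$ takes the finite value $\varsigma(p-l)$ (finite precisely because $p-l\neq1$); hence each contributes $\tfrac{(-1)^l}{l!}\varsigma(p-l)z^l$. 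Collecting all residues reproduces exactly $\Gamma(1-p)z^{p-1}+\sum_{l\ge0}(-1)^l\varsigma(p-l)z^l/l!$, the claimed right-hand side.

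To make this rigorous I would stop the contour shift at $\Re s=-N-\tfrac12$ and bound the remaining integral there. Using Stirling, $|\Gamma(\sigma+\mathbf{i}t)|\sim\sqrt{2\pi}\,|t|^{\sigma-1/2}e^{-\pi|t|/2}$ as $|t|\to\infty$, together with the standard polynomial-in-$t$ growth of $\varsigma$ along vertical lines (from the functional equation), and the identity $|z^{-s}|=|z|^{-\sigma}e^{t\arg z}$, the integrand on that line is $O\!\big(|t|^{M}e^{-\pi|t|/2}e^{t\arg z}|z|^{N+1/2}\big)$, which is integrable in $t$ and of order $O(|z|^{N+1/2})$ as $z\to0$ in any sector $|\arg z|\le\pi/2-\delta$; since $N$ is arbitrary, the asymptotic expansion follows. (Alternatively, one can avoid Mellin transforms by writing $Li_p(e^{-z})$ as a Hankel-contour integral of $u^{p-1}/(e^{u}-1)$ and inserting the Bernoulli-number expansion of $1/(e^u-1)$, using $\varsigma(-n)=-B_{n+1}/(n+1)$.) I expect the main obstacle to be precisely this last step: it requires uniform control of $\varsigma(s+p)$ arbitrarily far to the left of the critical strip, which is where the functional equation is indispensable, and it forces one to accept that the series generated is in general divergent, so that only each finite truncation — not an exact identity — can be validated, which is exactly why the statement is phrased with ``$\sim$''.
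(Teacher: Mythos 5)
The paper does not prove this lemma --- it is imported verbatim from the cited references --- and your Mellin-transform/residue argument is precisely the standard proof given there (Flajolet's singularity analysis of harmonic sums via the transform $\Gamma(s)\varsigma(s+p)$ and the converse mapping theorem), so your proposal is correct and follows essentially the same route as the source. One small inaccuracy in your closing remark: the series $\sum_{l}(-1)^l\varsigma(p-l)z^l/l!$ is not divergent in this case, since by the functional equation $\varsigma(p-l)/l!=O\left((2\pi)^{-l}l^{-p}\right)$, so it converges for $|z|<2\pi$ --- which is exactly what the paper's subsequent Lemma (absolute convergence for $|z|\le \pi/\sin(\theta)$) asserts and relies on.
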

\begin{lemma}[\cite{Flajolet1999,Jin2015}]\label{lemLipabconvergence}
    Let $|z|\leq \frac{\pi}{\sin(\theta)}$ with $\theta\in (\frac{\pi}{2},\frac{5\pi}{6})$ and $-1<p<0$. Then
    \begin{equation*}
    Li_p(e^{-z})=\Gamma(1-p)z^{p-1}+\sum_{l=0}^{\infty}(-1)^l\varsigma(p-l)\frac{z^l}{l!}
    \end{equation*}
    converges absolutely.
\end{lemma}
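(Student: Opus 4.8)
The plan is to bound the general term of the series $\sum_{l=0}^{\infty}(-1)^l\varsigma(p-l)\frac{z^l}{l!}$ and compare it with a convergent geometric-type series; everything hinges on a sharp estimate of $|\varsigma(p-l)|$ as $l\to\infty$, for which I would use the functional equation of the Riemann zeta function. Writing $\varsigma(s)=2^s\pi^{s-1}\sin(\pi s/2)\,\Gamma(1-s)\,\varsigma(1-s)$ with $s=p-l$ (real), we have $1-s=1-p+l\ge 2-p>2$ for $l\ge 1$, so $\varsigma(1-s)$ stays in the bounded interval $(1,\varsigma(2-p)]$; moreover $|\sin(\pi s/2)|\le 1$ and $|2^s\pi^{s-1}|=2^p\pi^{p-1}(2\pi)^{-l}$. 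Hence there is a constant $C=C(p)$ with
\[
|\varsigma(p-l)|\le C\,(2\pi)^{-l}\,\Gamma(1-p+l),\qquad l\ge 1,
\]
while the single term $l=0$ contributes the finite number $\varsigma(p)$ and needs no estimate.

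Next I would control the quotient $\Gamma(1-p+l)/l!=\Gamma(1-p+l)/\Gamma(l+1)$. By Stirling's formula this is $\sim l^{-p}$ as $l\to\infty$, so it is dominated by $C(1+l)^{-p}$, which grows only polynomially since $-p\in(0,1)$. Combining the two bounds, the $l$-th term of the series is controlled by
\[
|\varsigma(p-l)|\,\frac{|z|^l}{l!}\ \le\ C\,(1+l)^{-p}\Bigl(\frac{|z|}{2\pi}\Bigr)^{l}.
\]
Now the hypothesis on $\theta$ enters: for $\theta\in(\pi/2,5\pi/6)$ one has $\sin\theta\in(1/2,1)$, hence $|z|\le \pi/\sin\theta<2\pi$, so $q:=|z|/(2\pi)<1$. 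Since $\bigl((1+l)^{-p}\bigr)^{1/l}\to 1$, the root test shows $\sum_l (1+l)^{-p}q^{l}$ converges, and therefore the series defining the right-hand side converges absolutely on the whole disk $|z|\le\pi/\sin\theta$. The stated identity is then just the singular expansion of the previous lemma, whose remainder is exactly the tail of this now–convergent series.

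The main obstacle is the first step: extracting the exponential decay factor $(2\pi)^{-l}$ in the bound for $|\varsigma(p-l)|$. A naive estimate using only $|\varsigma(p-l)|\le$ something comparable to $\Gamma(1-p+l)$ would produce a series with zero radius of convergence, so the functional equation — which trades the large argument $1-p+l$ (where $\varsigma\to1$) for the small one $p-l$ and pulls out the factor $\pi^{s-1}2^s$ — is indispensable, and it is precisely this $2\pi$ that matches the admissible range $\pi/\sin\theta<2\pi$ dictated by the restriction $\theta<5\pi/6$. The remaining ingredients (Stirling for the Gamma quotient, the root test, and the elementary trigonometric inequality $\sin\theta>1/2$ on $(\pi/2,5\pi/6)$) are routine.
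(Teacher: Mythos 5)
The paper offers no proof of this lemma at all---it is quoted verbatim from the cited references \cite{Flajolet1999,Jin2015}---so the comparison is between your argument and the literature. Your convergence argument is correct and is the standard one: the functional equation $\varsigma(s)=2^{s}\pi^{s-1}\sin(\pi s/2)\Gamma(1-s)\varsigma(1-s)$ with $s=p-l$ gives $|\varsigma(p-l)|\leq C(2\pi)^{-l}\Gamma(1-p+l)$ for $l\geq 1$ (since $\varsigma(1-p+l)\in(1,\varsigma(2-p)]$), Stirling gives $\Gamma(1-p+l)/l!\leq C(1+l)^{-p}$ with only polynomial growth because $-p\in(0,1)$, and the hypothesis $\theta<5\pi/6$ gives $\sin\theta>1/2$, hence $|z|\leq\pi/\sin\theta<2\pi$ and a ratio $q=|z|/(2\pi)<1$ for the root test. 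You have correctly identified why the upper bound $5\pi/6$ on $\theta$ is exactly what makes the lemma work, which is the nontrivial insight here.

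The genuine gap is your final sentence. Knowing that the asymptotic series of the preceding lemma happens to converge does not show that it converges \emph{to} $Li_p(e^{-z})$: an asymptotic expansion as $z\to0$ constrains the function only modulo terms that are $o(z^{N})$ for every $N$, and ``the remainder is exactly the tail of the now-convergent series'' is precisely the assertion that needs proof, not a consequence of convergence. The equality is what the paper actually uses later (the proof of Lemma \ref{lemestpsi} substitutes the full expansion for $Li_{\alpha-1}(e^{-z\tau})$ and manipulates it term by term), so it cannot be waved away. In \cite{Flajolet1999} the identity comes from the Mellin--Barnes representation $Li_p(e^{-z})=\frac{1}{2\pi\mathbf{i}}\int_{c-\mathbf{i}\infty}^{c+\mathbf{i}\infty}\Gamma(\sigma)\varsigma(\sigma+p)z^{-\sigma}\,d\sigma$: shifting the contour to the left collects the residue $\Gamma(1-p)z^{p-1}$ at $\sigma=1-p$ and the residues $(-1)^{l}\varsigma(p-l)z^{l}/l!$ at $\sigma=-l$, and the shifted integral tends to zero precisely when $|z|<2\pi$, by the same zeta-growth estimate you derived. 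So your bound on $|\varsigma(p-l)|$ does double duty, but the contour-integral step is needed to convert the convergent series into an equality.
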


At the same time, we have the estimates.
\begin{lemma}[\cite{Jin2015,Jin2016}]\label{lemestzz}
    Assume $z\in\Sigma_{\theta}$, $|z|\leq\frac{\pi}{\tau\sin(\theta)}$ and $\theta\in(\pi/2,\pi)$. Then there are
    \begin{equation*}
    \begin{aligned}
    C_1|z|\leq\left |\frac{1-e^{-z\tau}}{\tau}\right |\leq C_2|z|,\qquad \left |\frac{1-e^{-z\tau}}{\tau}-z\right |\leq C\tau|z|^2.
    \end{aligned}
    \end{equation*}
\end{lemma}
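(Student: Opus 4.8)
The plan is to scale out the time step. Set $w=z\tau$; then the hypotheses (with $z$ ranging over the truncated contour $\Gamma_{\theta,\kappa}$ on which this lemma is used) give $w\in\Sigma_\theta$, $|w|\le\pi/\sin\theta$ and $|\mathrm{Im}\,w|\le\pi$. Since $\bigl|\tfrac{1-e^{-z\tau}}{\tau}\bigr|=\tfrac1\tau|1-e^{-w}|$, $|z|=\tfrac1\tau|w|$ and $\bigl|\tfrac{1-e^{-z\tau}}{\tau}-z\bigr|=\tfrac1\tau|1-e^{-w}-w|$, the lemma reduces to proving
\begin{equation*}
C_1|w|\le|1-e^{-w}|\le C_2|w|\qquad\text{and}\qquad|1-e^{-w}-w|\le C|w|^2
\end{equation*}
uniformly for such $w$.

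The upper bound and the difference estimate are routine Taylor bounds. From $1-e^{-w}=w-\sum_{k\ge2}(-w)^k/k!$ one obtains $|1-e^{-w}-w|\le|w|^2\sum_{k\ge2}|w|^{k-2}/k!\le e^{|w|}|w|^2\le e^{\pi/\sin\theta}|w|^2$, which is the second inequality, and then $|1-e^{-w}|\le|w|+e^{\pi/\sin\theta}|w|^2\le\bigl(1+\tfrac{\pi}{\sin\theta}e^{\pi/\sin\theta}\bigr)|w|$, which is the upper bound; translating back gives $\bigl|\tfrac{1-e^{-z\tau}}{\tau}-z\bigr|\le e^{\pi/\sin\theta}\tau|z|^2$ and the upper part of the first estimate.

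The lower bound $|1-e^{-w}|\ge C_1|w|$ is the crux. The obstacle is that $1-e^{-w}$ vanishes at $w\in 2\pi\mathbf{i}\,\mathbb{Z}\setminus\{0\}$, so the estimate can only hold once $w$ is kept away from those zeros — which is exactly what the size restriction on $z$ together with $|\mathrm{Im}\,w|\le\pi$ achieves. I would make this quantitative via the factorization $1-e^{-w}=2e^{-w/2}\sinh(w/2)$, so that $|1-e^{-w}|=2e^{-\mathrm{Re}\,w/2}\,|\sinh(w/2)|$. Here $\mathrm{Re}\,w\le|w|\le\pi/\sin\theta$ gives $e^{-\mathrm{Re}\,w/2}\ge e^{-\pi/(2\sin\theta)}$, while writing $w/2=u+\mathbf{i}v$ (so $|v|\le\pi/2$) and using the identity $|\sinh(u+\mathbf{i}v)|^2=\sinh^2u+\sin^2v$ together with the elementary bounds $|\sinh u|\ge|u|$ and $|\sin v|\ge\tfrac{2}{\pi}|v|$ for $|v|\le\pi/2$ yields $|\sinh(w/2)|^2\ge u^2+\tfrac{4}{\pi^2}v^2\ge\tfrac{1}{\pi^2}|w|^2$. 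Hence $|1-e^{-w}|\ge\tfrac{2}{\pi}e^{-\pi/(2\sin\theta)}|w|$, i.e. the lower bound with $C_1=\tfrac{2}{\pi}e^{-\pi/(2\sin\theta)}$. (A softer alternative: $(1-e^{-w})/w$, extended by the value $1$ at $w=0$, is continuous and — because $\arg(2\pi\mathbf{i}k)=\pm\pi/2\ne\theta$ — zero-free on the compact set swept out by $w=z\tau$, hence has a positive minimum modulus there.) Dividing through by $\tau$ returns $C_1|z|\le\bigl|\tfrac{1-e^{-z\tau}}{\tau}\bigr|$, which completes the proof.
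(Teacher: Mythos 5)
Your proof is correct and complete; the paper itself offers no proof of this lemma (it is quoted from \cite{Jin2015,Jin2016}), and your argument --- rescale to $w=z\tau$, Taylor expansion for the upper bound and the $O(\tau|z|^2)$ estimate, and the factorization $1-e^{-w}=2e^{-w/2}\sinh(w/2)$ with $|\sinh(u+\mathbf{i}v)|^2=\sinh^2u+\sin^2v$ for the lower bound --- is exactly the standard route taken in those references. Your parenthetical caveat is also substantively right and worth keeping: the hypotheses as literally stated ($z\in\Sigma_\theta$, $|z|\le\pi/(\tau\sin\theta)$) do \emph{not} force $|\mathrm{Im}(z\tau)|\le\pi$, and the lower bound genuinely fails at $z\tau=2\pi\mathbf{i}$ when $\sin\theta\le 1/2$ (e.g.\ $\theta=5\pi/6$); the estimate is saved precisely because the lemma is only ever invoked on $\Gamma^{\tau}_{\theta,\kappa}$, whose rays have $|\arg z|=\theta$ and hence $|\mathrm{Im}(z\tau)|=|z\tau|\sin\theta\le\pi$.
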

Next, we give the error estimates of the fully discrete scheme. To get the solutions of the system \eqref{equfulldis}, multiplying $\zeta^n$ on both sides of the first two equations in (\ref{equfulldis}), summing $n$ from $1$ to $\infty$  and using \eqref{equweightdalpha}, there exist
\begin{equation}\label{equnumsollapformtoest1}
\begin{aligned}
\sum_{i=1}^{\infty}G^i_{1,h}\zeta^i=&\frac{\zeta}{\tau}\left (\frac{1-\zeta}{\tau}\right )^{-1}\Bigg(H_{2}(\psi^{1-\alpha_2}(\zeta),\psi^{1-\alpha_1}(\zeta),A_{2,h},A_{1,h})G_{1,h}(0)\\
&+aH_1(\psi^{1-\alpha_2}(\zeta),\psi^{1-\alpha_1}(\zeta),A_{2,h},A_{1,h})\psi^{1-\alpha_2}(\zeta)G_{2,h}(0)\Huge \Bigg),\\
\end{aligned}
\end{equation}
\begin{equation}\label{equnumsollapformtoest2}
\begin{aligned}
&\sum_{i=1}^{\infty}G^i_{2,h}\zeta^i
\\
&=\frac{\zeta}{\tau}\left (\frac{1-\zeta}{\tau}\right )^{-1}\Bigg (aH_1(\psi^{1-\alpha_1}(\zeta),\psi^{1-\alpha_2}(\zeta),A_{1,h},A_{2,h})\psi^{1-\alpha_1}(\zeta)G_{1,h}(0)\\
&\quad+H_{2}(\psi^{1-\alpha_1}(\zeta),\psi^{1-\alpha_2}(\zeta),A_{1,h},A_{2,h})G_{2,h}(0)\Bigg ),
\end{aligned}
\end{equation}
where
\begin{equation*}
\psi^{\alpha}(\zeta)=\tau^{-\alpha}\left (\frac{1-\zeta}{\tau}\right )^{-1}\left( \sum_{j=0}^{\infty}d^{\alpha}_j\zeta^j\right ).
\end{equation*}
As for $\psi^{\alpha}(\zeta)$, using the definition of $d^\alpha_j$ and $Li_p(z)$,
we have
\begin{equation*}
\begin{aligned}
\psi^{\alpha}(\zeta)=&\tau^{-\alpha}\left (\frac{1-\zeta}{\tau}\right )^{-1}\left( \sum_{j=1}^{\infty}(b^{\alpha}_j-b^{\alpha}_{j-1})\zeta^j+b^\alpha_0\zeta^0\right )\\
=&\tau^{1-\alpha} \sum_{j=0}^{\infty}b^{\alpha}_j\zeta^j=\frac{\tau^{1-\alpha}}{\Gamma(2-\alpha)} \left(\sum_{j=0}^{\infty}((j+1)^{1-\alpha}-j^{1-\alpha})\zeta^j\right)\\
=&\frac{\tau^{1-\alpha}}{\Gamma(2-\alpha)}\frac{(1-\zeta)}{\zeta} \left(\sum_{j=0}^{\infty}j^{1-\alpha}\zeta^j\right)=\frac{\tau^{1-\alpha}}{\Gamma(2-\alpha)}\frac{(1-\zeta)}{\zeta} Li_{\alpha-1}(\zeta).
\end{aligned}
\end{equation*}
Then, there is the following estimate.
\begin{lemma}\label{lemestpsi}
    Let $z\in \Gamma_{\theta,\kappa}$, $ |z\tau|\leq \frac{\pi}{sin(\theta)}$ and $ \theta\in (\pi/2,5\pi/6) $. Then we have
    \begin{equation*}
    \left |\psi^{\alpha}(e^{-z\tau})-z^{\alpha-1}\right |\leq C\tau|z|^{\alpha}.
    \end{equation*}
\end{lemma}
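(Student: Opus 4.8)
The plan is to start from the closed-form expression derived just above the statement,
\[
\psi^{\alpha}(e^{-z\tau})=\frac{\tau^{1-\alpha}}{\Gamma(2-\alpha)}\,\frac{1-e^{-z\tau}}{e^{-z\tau}}\,Li_{\alpha-1}(e^{-z\tau})=\frac{\tau^{1-\alpha}}{\Gamma(2-\alpha)}\,(e^{z\tau}-1)\,Li_{\alpha-1}(e^{-z\tau}),
\]
and then to insert the singular expansion of the polylogarithm. Since $\alpha\in(0,1)$, the index $p:=\alpha-1$ lies in $(-1,0)$, and the standing hypotheses $|z\tau|\le\pi/\sin\theta$, $\theta\in(\pi/2,5\pi/6)$ are exactly those of Lemma~\ref{lemLipabconvergence} applied with argument $z\tau$; hence
\[
Li_{\alpha-1}(e^{-z\tau})=\Gamma(2-\alpha)(z\tau)^{\alpha-2}+\sum_{l=0}^{\infty}(-1)^l\varsigma(\alpha-1-l)\frac{(z\tau)^l}{l!},
\]
with the series converging absolutely on the disk $|z\tau|\le\pi/\sin\theta$.

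Substituting this identity, I would split $\psi^{\alpha}(e^{-z\tau})$ into a principal term and a remainder. The principal term simplifies, after cancelling $\Gamma(2-\alpha)$ and collecting powers of $\tau$, to
\[
\frac{\tau^{1-\alpha}}{\Gamma(2-\alpha)}(e^{z\tau}-1)\,\Gamma(2-\alpha)(z\tau)^{\alpha-2}=\frac{e^{z\tau}-1}{\tau}\,z^{\alpha-2},
\]
so that its deviation from $z^{\alpha-1}$ equals $z^{\alpha-2}\big(\tfrac{e^{z\tau}-1}{\tau}-z\big)$. A one-line Taylor estimate, $e^{z\tau}-1-z\tau=\sum_{k\ge2}(z\tau)^k/k!$, together with the boundedness of $|z\tau|$, gives $|\tfrac{e^{z\tau}-1}{\tau}-z|\le C\tau|z|^2$ (cf.\ Lemma~\ref{lemestzz}), whence this contribution to the error is already bounded by $C\tau|z|^{\alpha}$.

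It then remains to bound the remainder $\frac{\tau^{1-\alpha}}{\Gamma(2-\alpha)}(e^{z\tau}-1)\sum_{l\ge0}(-1)^l\varsigma(\alpha-1-l)(z\tau)^l/l!$. Using $|e^{z\tau}-1|\le C|z\tau|$ and the fact that the absolutely convergent series is bounded by a constant on $|z\tau|\le\pi/\sin\theta$, this term is $\le C\tau^{1-\alpha}|z|\tau=C\tau^{2-\alpha}|z|$; writing $\tau^{2-\alpha}|z|=\tau|z|^{\alpha}\,(\tau|z|)^{1-\alpha}$ and using $1-\alpha>0$ with $\tau|z|\le\pi/\sin\theta$, this is again $\le C\tau|z|^{\alpha}$. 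Adding the two contributions yields $|\psi^{\alpha}(e^{-z\tau})-z^{\alpha-1}|\le C\tau|z|^{\alpha}$. The only points that require care are the verification that Lemma~\ref{lemLipabconvergence} applies (which holds verbatim) and the bookkeeping of the powers of $\tau$ and $|z|$ — specifically, exploiting that $|z\tau|$ stays bounded so that any surplus positive power of $|z\tau|$ is absorbed into the generic constant $C$; I do not foresee a substantive obstacle.
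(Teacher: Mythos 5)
Your proposal is correct and follows essentially the same route as the paper: both insert the singular expansion of $Li_{\alpha-1}(e^{-z\tau})$ from Lemma \ref{lemLipabconvergence} into the closed-form expression for $\psi^{\alpha}$, identify $z^{\alpha-1}$ as the leading term, and absorb the remaining contributions using the boundedness of $|z\tau|$. The only cosmetic difference is that the paper expands $e^{z\tau}-1$ as a Taylor series and multiplies out, while you keep it intact and estimate $(e^{z\tau}-1)/\tau - z$ directly; the bookkeeping is identical.
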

\begin{proof}
    By Lemma \ref{lemLipabconvergence}, there exists
    \begin{equation*}
    \begin{aligned}
    \psi^{\alpha}(e^{-z\tau})=&\tau^{1-\alpha}\sum_{j=1}^{\infty}\frac{(z\tau)^j}{j!}\left [(z\tau)^{\alpha-2}+\sum_{k=0}^{\infty}\frac{(-1)^k\varsigma(-\alpha-k)}{\Gamma(2-\alpha)}\frac{(z\tau)^k}{k!}\right ]\\
    =&z^{\alpha-1}+\sum_{j=2}^{\infty}\frac{z^{\alpha-2+j}\tau^{j-1}}{j!}+\tau^{1-\alpha}\sum_{j=1}^{\infty}\frac{(z\tau)^j}{j!}\sum_{k=0}^{\infty}\frac{(-1)^k\varsigma(-\alpha-k)}{\Gamma(2-\alpha)}\frac{(z\tau)^k}{k!}\\
    =&z^{\alpha-1}+\mathcal{O}(|z|^{\alpha}\tau).
    \end{aligned}
    \end{equation*}
\end{proof}

Now we give the error estimates between the solutions of the systems \eqref{equequsysspatialsemiAh} and \eqref{equfulldis}.
\begin{theorem}\label{thmhomfullest}
    Let $G_{1,h}$, $G_{2,h}$ and $G^n_{1,h}$, $G^n_{2,h}$ be the solutions of the systems \eqref{equequsysspatialsemiAh} and \eqref{equfulldis}, respectively. Then
    \begin{equation*}
    \begin{aligned}
    \|G_{1,h}(t_n)-G^n_{1,h}\|_{L^2(\Omega)}\leq C\tau(t_n^{-1}\|G_{1,h}(0)\|_{L^2(\Omega)}+t_n^{\alpha_2-1}\|G_{2,h}(0)\|_{L^2(\Omega)}),\\
    \|G_{2,h}(t_n)-G^n_{2,h}\|_{L^2(\Omega)}\leq C\tau(t_n^{\alpha_1-1}\|G_{1,h}(0)\|_{L^2(\Omega)}+t_n^{-1}\|G_{2,h}(0)\|_{L^2(\Omega)}).
    \end{aligned}
    \end{equation*}
\end{theorem}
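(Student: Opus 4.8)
The plan is to follow the standard resolvent/contour-integral comparison between a Laplace-transform solution and its convolution-quadrature ($L_1$) discretization, in the spirit of the references \cite{Nie2018,Jin2015,Jin2016} cited above. Fix $\theta\in(\pi/2,5\pi/6)$ and let $\kappa$ be large enough for Lemma~\ref{lemtheestimateofH1H2}. First I would write the semidiscrete solution via the inverse Laplace transform of \eqref{equsemisolrep},
\[
G_{1,h}(t_n)=\frac{1}{2\pi\mathbf{i}}\int_{\Gamma_{\theta,\kappa}}e^{zt_n}\tilde{G}_{1,h}(z)\,dz ,
\]
and, applying Cauchy's integral formula to the power series \eqref{equnumsollapformtoest1} and substituting $\zeta=e^{-z\tau}$ (so that $\zeta^{-n-1}\,d\zeta$ pulls back to $-\tau e^{zt_n}\,dz$), represent the fully discrete solution as
\[
G^n_{1,h}=\frac{\tau}{2\pi\mathbf{i}}\int_{\Gamma^{\tau}_{\theta,\kappa}}e^{zt_n}\,g_{1,h}(e^{-z\tau})\,dz ,\qquad g_{1,h}(\zeta)=\sum_{i\ge1}G^i_{1,h}\zeta^i ,
\]
where $\Gamma^{\tau}_{\theta,\kappa}$ denotes the part of $\Gamma_{\theta,\kappa}$ with $|\mathrm{Im}\,z|\le\pi/\tau$; deforming the vertical segment $\mathrm{Re}\,z=\kappa$ onto $\Gamma^{\tau}_{\theta,\kappa}$ is legitimate because $\psi^{\alpha}(e^{-z\tau})$ (through $Li_{\alpha-1}$) and hence $H_1,H_2$ are analytic in the relevant region. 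Inserting $\pm\frac{1}{2\pi\mathbf{i}}\int_{\Gamma^{\tau}_{\theta,\kappa}}e^{zt_n}\tilde{G}_{1,h}(z)\,dz$ then splits the error as $G_{1,h}(t_n)-G^n_{1,h}=\mathrm{I}+\mathrm{II}$, where $\mathrm{I}$ is the tail integral of the exact kernel over $\Gamma_{\theta,\kappa}\setminus\Gamma^{\tau}_{\theta,\kappa}$ and $\mathrm{II}$ is the integral over $\Gamma^{\tau}_{\theta,\kappa}$ of the kernel difference $\tilde{G}_{1,h}(z)-\tau g_{1,h}(e^{-z\tau})$.

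For the truncation term $\mathrm{I}$, Lemma~\ref{lemtheestimateofH1H2} applied to \eqref{equsemisolrep} gives $\|\tilde{G}_{1,h}(z)\|_{L^2(\Omega)}\le C|z|^{-1}\|G_{1,h}(0)\|_{L^2(\Omega)}+C|z|^{-1-\alpha_2}\|G_{2,h}(0)\|_{L^2(\Omega)}$. The excluded part of $\Gamma_{\theta,\kappa}$ lies on the rays $\{r e^{\pm\mathbf{i}\theta}:r\ge c/\tau\}$ with $c t_n/\tau=cn\gtrsim 1$, so $\int_{c/\tau}^{\infty}e^{r\cos\theta\,t_n}r^{-1}\,dr\le C\tau t_n^{-1}$ and, since $\tau\le t_n$, $\int_{c/\tau}^{\infty}e^{r\cos\theta\,t_n}r^{-1-\alpha_2}\,dr\le C\tau^{1+\alpha_2}t_n^{-1}\le C\tau t_n^{\alpha_2-1}$; hence $\|\mathrm{I}\|_{L^2(\Omega)}$ is dominated by the claimed right-hand side.

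For the consistency term $\mathrm{II}$, I would establish, for $z\in\Gamma^{\tau}_{\theta,\kappa}$ (where $|z\tau|\le\pi/\sin\theta$), a pointwise bound on $\tilde{G}_{1,h}(z)-\tau g_{1,h}(e^{-z\tau})$ by comparing \eqref{equsemisolrep} with $\tau g_{1,h}$ factor by factor. Lemma~\ref{lemestzz} yields $\bigl|\tfrac{\tau e^{-z\tau}}{1-e^{-z\tau}}-z^{-1}\bigr|\le C\tau$ and $\bigl|\tfrac{\tau e^{-z\tau}}{1-e^{-z\tau}}\bigr|\le C|z|^{-1}$, and Lemma~\ref{lemestpsi} gives $|\psi^{1-\alpha_i}(e^{-z\tau})-z^{-\alpha_i}|\le C\tau|z|^{1-\alpha_i}$. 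Writing $P:=H_1^{-1}$ and using the resolvent identity $H_1(w)-H_1(z)=H_1(w)\bigl(P(z)-P(w)\bigr)H_1(z)$ together with the analogous decomposition for $H_2$ and the boundedness in Lemma~\ref{lemtheestimateofH1H2}, I expect a Lipschitz estimate $\|H_j(\psi^{1-\alpha_2},\psi^{1-\alpha_1},A_{2,h},A_{1,h})-H_j(z^{-\alpha_2},z^{-\alpha_1},A_{2,h},A_{1,h})\|\le C\tau|z|$ for $j=1,2$. Splitting each three-factor product $ABC-A'B'C'=(A-A')BC+A'(B-B')C+A'B'(C-C')$ accordingly, every resulting term contains a factor $\tau$ and leaves a kernel bounded by $C\tau$ for the $G_{1,h}(0)$-part and by $C\tau|z|^{-\alpha_2}$ for the $G_{2,h}(0)$-part. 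Integrating over $\Gamma^{\tau}_{\theta,\kappa}\subset\Gamma_{\theta,\kappa}$ with Lemma~\ref{lemestZ} (taking $\alpha=0$ and $\alpha=-\alpha_2$) gives $\|\mathrm{II}\|_{L^2(\Omega)}\le C\tau t_n^{-1}\|G_{1,h}(0)\|_{L^2(\Omega)}+C\tau t_n^{\alpha_2-1}\|G_{2,h}(0)\|_{L^2(\Omega)}$. The bound for $G_{2,h}(t_n)-G^n_{2,h}$ follows in the same way from \eqref{equsemisolrep} and \eqref{equnumsollapformtoest2} by swapping the two components.

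The hard part will be the $h$-uniform Lipschitz estimate for $H_1,H_2$: in $P(z)-P(w)$ the discrete operators $A_{1,h},A_{2,h}$ --- whose operator norms grow as $h\to0$ --- appear multiplied by the perturbed complex arguments, so one has to produce auxiliary bounds such as $\|z_iA_{i,h}H_1\|\le C$ and $\|z_1z_2A_{1,h}A_{2,h}H_1\|\le C$ and combine them with $|\psi^{1-\alpha_i}(e^{-z\tau})-z^{-\alpha_i}|/|z|^{-\alpha_i}\le C\tau|z|$ so that the unbounded factors are completely absorbed; this forces a re-examination of the denominator lower bounds behind Lemma~\ref{lemtheestimateofH1H2} with all constants independent of $h$. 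A lesser technical point is checking the analyticity of $g_{1,h}(e^{-z\tau})$ on $|\mathrm{Im}\,z|\le\pi/\tau$ needed to justify the contour deformation.
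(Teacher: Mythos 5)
Your proposal is correct and follows essentially the same route as the paper: the same generating-function/Cauchy-integral representation of $G^n_{1,h}$ with $\zeta=e^{-z\tau}$, the same contour deformation to $\Gamma^\tau_{\theta,\kappa}$, the same tail-plus-consistency splitting, and the same factor-by-factor comparison using Lemmas \ref{lemtheestimateofH1H2}, \ref{lemestzz} and \ref{lemestpsi}. The ``hard part'' you flag is handled in the paper exactly as you anticipate, via $h$-uniform bounds on the partial derivatives $H^{(1,0)}_j$, $H^{(0,1)}_j$ of $H_1,H_2$ in their scalar arguments combined with the mean value theorem.
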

\begin{proof}
    We first consider the error estimates between $ G^n_{1,h} $ and $ G_{1,h} $. By \eqref{equnumsollapformtoest1}, for small $\xi_\tau=e^{-\tau(\kappa+1)}$, there is
    \begin{equation*}
    \begin{aligned}
    G^n_{1,h}=&\frac{1}{2\pi \mathbf{i}\tau}\int_{|\zeta|=\xi_\tau}\zeta^{-n-1}\zeta\left (\frac{1-\zeta}{\tau}\right )^{-1}\Bigg(H_{2}(\psi^{1-\alpha_2}(\zeta),\psi^{1-\alpha_1}(\zeta),A_{2,h},A_{1,h})\\
    &\cdot G_{1,h}(0)+aH_1(\psi^{1-\alpha_2}(\zeta),\psi^{1-\alpha_1}(\zeta),A_{2,h},A_{1,h})\psi^{1-\alpha_2}(\zeta)G_{2,h}(0)\Huge \Bigg) d\zeta.
    \end{aligned}
    \end{equation*}
Letting $\zeta=e^{-z\tau}$ leads to
    \begin{equation*}
    \begin{aligned}
    G^n_{1,h}=&\frac{1}{2\pi \mathbf{i}}\int_{\Gamma^\tau}e^{zt_n}e^{-z\tau}\left (\frac{1-e^{-z\tau}}{\tau}\right )^{-1}
    \\
    &
    \cdot\Bigg(H_{2}(\psi^{1-\alpha_2}(e^{-z\tau}),\psi^{1-\alpha_1}(e^{-z\tau}),A_{2,h},A_{1,h})G_{1,h}(0)\\
    &+aH_1(\psi^{1-\alpha_2}(e^{-z\tau}),\psi^{1-\alpha_1}(e^{-z\tau}),A_{2,h},A_{1,h})\psi^{1-\alpha_2}(e^{-z\tau})G_{2,h}(0)\Huge \Bigg) dz,
    \end{aligned}
    \end{equation*}
    where $\Gamma^\tau=\{z=\kappa+1+\mathbf{i}y:y\in\mathbb{R}~{\rm and}~|y|\leq \pi/\tau\}$. Next we deform the contour $\Gamma^\tau$ to
    $\Gamma^\tau_{\theta,\kappa}=\{z\in \mathbb{C}:\kappa\leq |z|\leq\frac{\pi}{\tau\sin(\theta)},|\arg z|=\theta\}\cup\{z\in \mathbb{C}:|z|=\kappa,|\arg z|\leq\theta\}$. Thus
    \begin{equation}\label{equfulldissolG1}
    \begin{aligned}
    G^n_{1,h}=& \frac{1}{2\pi \mathbf{i}}\int_{\Gamma^\tau_{\theta,\kappa}}e^{zt_n}e^{-z\tau}\left (\frac{1-e^{-z\tau}}{\tau}\right )^{-1}
    \\
    &
    \cdot \Bigg(H_{2}(\psi^{1-\alpha_2}(e^{-z\tau}),\psi^{1-\alpha_1}(e^{-z\tau}),A_{2,h},A_{1,h})G_{1,h}(0)\\
    &+aH_1(\psi^{1-\alpha_2}(e^{-z\tau}),\psi^{1-\alpha_1}(e^{-z\tau}),A_{2,h},A_{1,h})\psi^{1-\alpha_2}(e^{-z\tau})G_{2,h}(0)\Huge \Bigg)dz.
    \end{aligned}
    \end{equation}
    In view of \eqref{equsemisolrep}, there exists
    \begin{equation}\label{eqG1}
    \begin{aligned}
    G_{1,h}(t)=&\frac{1}{2\pi \mathbf{i}}\int_{\Gamma_{\theta,\kappa}}e^{zt}z^{-1}H_2(z^{-\alpha_2},z^{-\alpha_1},A_{2,h},A_{1,h})G_{1,h}(0)dz\\
    &+\frac{1}{2\pi \mathbf{i}}\int_{\Gamma_{\theta,\kappa}}e^{zt}az^{-1}H_1(z^{-\alpha_2},z^{-\alpha_1},A_{2,h},A_{1,h})z^{-\alpha_2}G_{2,h}(0)dz.
    \end{aligned}
    \end{equation}
    Combining \eqref{equfulldissolG1} and \eqref{eqG1} leads to
    \begin{equation*}
    \begin{aligned}
    &G_{1,h}(t_n)-G^n_{1,h}\\=&\frac{1}{2\pi \mathbf{i}}\int_{\Gamma_{\theta,\kappa}\backslash\Gamma^{\tau}_{\theta,\kappa}}e^{zt_n}z^{-1}H_2(z^{-\alpha_2},z^{-\alpha_1},A_{2,h},A_{1,h})G_{1,h}(0)dz\\
    &+\frac{1}{2\pi \mathbf{i}}\int_{\Gamma_{\theta,\kappa}\backslash\Gamma^{\tau}_{\theta,\kappa}}e^{zt_n}az^{-1}H_1(z^{-\alpha_2},z^{-\alpha_1},A_{2,h},A_{1,h})z^{-\alpha_2}G_{2,h}(0)dz\\
    &+\frac{1}{2\pi \mathbf{i}}\int_{\Gamma^{\tau}_{\theta,\kappa}}e^{zt_n}\Bigg (z^{-1}H_2(z^{-\alpha_2},z^{-\alpha_1},A_{2,h},A_{1,h})-e^{-z\tau}\left (\frac{1-e^{-z\tau}}{\tau}\right )^{-1}\\
    &\cdot H_{2}(\psi^{1-\alpha_2}(e^{-z\tau}),\psi^{1-\alpha_1}(e^{-z\tau}),A_{2,h},A_{1,h})\Bigg )G_{1,h}(0)dz\\
    &+\frac{a}{2\pi \mathbf{i}}\int_{\Gamma^{\tau}_{\theta,\kappa}}e^{zt_n}\Bigg(z^{-1}H_1(z^{-\alpha_2},z^{-\alpha_1},A_{2,h},A_{1,h})z^{-\alpha_2}-e^{-z\tau}\left (\frac{1-e^{-z\tau}}{\tau}\right )^{-1}\\
    &\cdot H_1(\psi^{1-\alpha_2}(e^{-z\tau}),\psi^{1-\alpha_1}(e^{-z\tau}),A_{2,h},A_{1,h})\psi^{1-\alpha_2}(e^{-z\tau})\Bigg)G_{2,h}(0)dz\\
    =&\uppercase\expandafter{\romannumeral1}+\uppercase\expandafter{\romannumeral2}+\uppercase\expandafter{\romannumeral3}+\uppercase\expandafter{\romannumeral4}.
    \end{aligned}
    \end{equation*}
    According to Lemma \ref{lemtheestimateofH1H2}, there exists
    \begin{equation*}
    \begin{aligned}
    &\|\uppercase\expandafter{\romannumeral1}\|_{L^2(\Omega)}
    \\
    &\leq C\int_{\Gamma_{\theta,\kappa}\backslash\Gamma^{\tau}_{\theta,\kappa}}e^{-C|z|t_n}|z|^{-1} \|H_{2}(z^{-\alpha_2},z^{-\alpha_1},A_{2,h},A_{1,h})\||dz|\|G_{1,h}(0)\|_{L^2(\Omega)}\\&\leq C t_n^{-1}\tau\|G_{1,h}(0)\|_{L^2(\Omega)}.
    \end{aligned}
    \end{equation*}
    For $ \uppercase\expandafter{\romannumeral2} $, similarly it has
    \begin{equation*}
    \begin{aligned}
    &\|\uppercase\expandafter{\romannumeral2}\|_{L^2(\Omega)}
    \\
    &\leq C\int_{\Gamma_{\theta,\kappa}\backslash\Gamma^\tau_{\theta,\kappa}}e^{-C|z|t_n}|z|^{-\alpha_2-1}\|a H_1(z^{-\alpha_2},z^{-\alpha_1},A_{2,h},A_{1,h})\||dz|\|G_{2,h}(0)\|_{L^2(\Omega)}\\&\leq C t_n^{\alpha_2-1}\tau\|G_{2,h}(0)\|_{L^2(\Omega)}.
    \end{aligned}
    \end{equation*}
    Next for $ \uppercase\expandafter{\romannumeral3} $ and $ \uppercase\expandafter{\romannumeral4} $, there are
    \begin{equation*}
    \begin{aligned}
    &\uppercase\expandafter{\romannumeral3}
    \\
    &=\frac{1}{2\pi \mathbf{i}}\int_{\Gamma^{\tau}_{\theta,\kappa}}e^{zt_n}e^{-z\tau}\Bigg (z^{-1}H_{2}(z^{-\alpha_2},z^{-\alpha_1},A_{2,h},A_{1,h})-\\&\left (\frac{1-e^{-z\tau}}{\tau}\right )^{-1}H_{2}(z^{-\alpha_2},z^{-\alpha_1},A_{2,h},A_{1,h})\Bigg )G_{1,h}(0)dz\\
    &+\frac{1}{2\pi \mathbf{i}}\int_{\Gamma^{\tau}_{\theta,\kappa}}e^{zt_n}e^{-z\tau}\Bigg (\left (\frac{1-e^{-z\tau}}{\tau}\right )^{-1}H_{2}(z^{-\alpha_2},z^{-\alpha_1},A_{2,h},A_{1,h})\\
    &-\left (\frac{1-e^{-z\tau}}{\tau}\right )^{-1}H_{2}(\psi^{1-\alpha_2}(e^{-z\tau}),\psi^{1-\alpha_1}(e^{-z\tau}),A_{2,h},A_{1,h})\Bigg )G_{1,h}(0)dz\\
    &+\frac{1}{2\pi \mathbf{i}}\int_{\Gamma^{\tau}_{\theta,\kappa}}e^{zt_n}e^{-z\tau}(e^{z\tau}-1)z^{-1}H_{2}(z^{-\alpha_2},z^{-\alpha_1},A_{2,h},A_{1,h})dz
    \\
    &
    =\uppercase\expandafter{\romannumeral3}_1+\uppercase\expandafter{\romannumeral3}_2+\uppercase\expandafter{\romannumeral3}_3
    \end{aligned}
    \end{equation*}
    and
    \begin{equation*}
    \begin{aligned}
   & \uppercase\expandafter{\romannumeral4} \\
    &=\frac{a}{2\pi \mathbf{i}}\int_{\Gamma^{\tau}_{\theta,\kappa}}e^{zt_n}e^{-z\tau}\Bigg(z^{-1}H_{1}(z^{-\alpha_2},z^{-\alpha_1},A_{2,h},A_{1,h})z^{-\alpha_2}\\
    &- \left (\frac{1-e^{-z\tau}}{\tau}\right )^{-1}H_{1}(z^{-\alpha_2},z^{-\alpha_1},A_{2,h},A_{1,h})z^{-\alpha_2}\Bigg)G_{2,h}(0)dz\\
    &+\frac{a}{2\pi \mathbf{i}}\int_{\Gamma^{\tau}_{\theta,\kappa}}e^{zt_n}e^{-z\tau}\Bigg(\left (\frac{1-e^{-z\tau}}{\tau}\right )^{-1}H_{1}(z^{-\alpha_2},z^{-\alpha_1},A_{2,h},A_{1,h})z^{-\alpha_2}\\
    &- \left (\frac{1-e^{-z\tau}}{\tau}\right )^{-1}H_{1}(\psi^{1-\alpha_2}(e^{-z\tau}),\psi^{1-\alpha_1}(e^{-z\tau}),A_{2,h},A_{1,h})\psi^{1-\alpha_2}(e^{-z\tau})\Bigg)
    \\
    &
    \cdot G_{2,h}(0)dz\\
    &+\frac{a}{2\pi \mathbf{i}}\int_{\Gamma^{\tau}_{\theta,\kappa}}e^{zt_n}e^{-z\tau}(e^{z\tau}-1)z^{-1}H_{1}(z^{-\alpha_2},z^{-\alpha_1},A_{2,h},A_{1,h})z^{-\alpha_2}dz
    \\
    &
    =\uppercase\expandafter{\romannumeral4}_1+\uppercase\expandafter{\romannumeral4}_2+\uppercase\expandafter{\romannumeral4}_3.
    \end{aligned}
    \end{equation*}
As for $\uppercase\expandafter{\romannumeral3}_1$ and $\uppercase\expandafter{\romannumeral4}_1$, using Lemmas \ref{lemtheestimateofH1H2} and \ref{lemestzz} leads to
    \begin{equation*}
    \begin{aligned}
    &\left \|z^{-1}H_{2}(z^{-\alpha_2},z^{-\alpha_1},A_{2,h},A_{1,h})-\right.\\&\quad\left.\left (\frac{1-e^{-z\tau}}{\tau}\right )^{-1}H_{2}(z^{-\alpha_2},z^{-\alpha_1},A_{2,h},A_{1,h})\right \|_{L^2(\Omega)\rightarrow L^2(\Omega)}\leq C\tau
    \end{aligned}
    \end{equation*}
    and
    \begin{equation*}
    \begin{aligned}
    &\left \|z^{-1}H_1(z^{-\alpha_2},z^{-\alpha_1},A_{2,h},A_{1,h})z^{-\alpha_2}-\right.\\&\quad\left. \left (\frac{1-e^{-z\tau}}{\tau}\right )^{-1}H_1(z^{-\alpha_2},z^{-\alpha_1},A_{2,h},A_{1,h})z^{-\alpha_2}\right \|_{L^2(\Omega)\rightarrow L^2(\Omega)}\leq C\tau|z|^{-\alpha_2}.
    \end{aligned}
    \end{equation*}
    Thus
    \begin{equation*}
    \|\uppercase\expandafter{\romannumeral3}_1\|_{L^2(\Omega)}\leq C\tau\int_{\Gamma^{\tau}_{\theta,\kappa}}e^{-C|z|t_{n-1}}|dz|\|G_{1,h}(0)\|_{L^2(\Omega)}\leq C t_n^{-1}\tau\|G_{1,h}(0)\|_{L^2(\Omega)}
    \end{equation*}
    and
    \begin{equation*}
    \begin{aligned}
    &
    \|\uppercase\expandafter{\romannumeral4}_1\|_{L^2(\Omega)}
    \\
    &
    \leq C\tau\int_{\Gamma^{\tau}_{\theta,\kappa}}e^{-C|z|t_{n-1}}|z|^{-\alpha_2}|dz|\|G_{2,h}(0)\|_{L^2(\Omega)}\leq C t_n^{\alpha_2-1}\tau\|G_{2,h}(0)\|_{L^2(\Omega)}.
    \end{aligned}
    \end{equation*}
Denote $H^{(a,b)}(z_1,z_2,A_1,A_2)$ as the $a$-th order derivative of $z_1$ and $b$-th order derivative of $z_2$. Using
 \begin{align*} &\|H^{(1,0)}_{2}(z^{-\alpha_2},z^{-\alpha_1},A_{2,h},A_{1,h})\|_{L^2(\Omega)\rightarrow L^2(\Omega)}\leq
    C|z|^{\alpha_2} ,\\ &\|H^{(0,1)}_{2}(z^{-\alpha_2},z^{-\alpha_1},A_{2,h},A_{1,h})\|_{L^2(\Omega)\rightarrow L^2(\Omega)}\leq
    C|z|^{\alpha_1} ,\\
    &\|H^{(0,1)}_{1}(z^{-\alpha_2},z^{-\alpha_1},A_{2,h},A_{1,h})z^{-\alpha_2}\|_{L^2(\Omega)\rightarrow L^2(\Omega)}\leq
    C|z|^{-\alpha_1-\alpha_2},\\ &\|H^{(1,0)}_{1}(z^{-\alpha_2},z^{-\alpha_1},A_{2,h},A_{1,h})z^{-\alpha_2}-H_{1}(z^{-\alpha_2},z^{-\alpha_1},A_{2,h},A_{1,h})\|_{L^2(\Omega)\rightarrow L^2(\Omega)}\\
    &
    \leq
    C,
    \end{align*}
    the mean value theorem, and the Lemmas \ref{lemestzz} and \ref{lemestpsi}, there are
    \begin{equation*}
    \begin{aligned}
    &\left \|\left (\frac{1-e^{-z\tau}}{\tau}\right )^{-1}H_{2}(z^{-\alpha_2},z^{-\alpha_1},A_{2,h},A_{1,h})-\right.\\
    &\left.\left (\frac{1-e^{-z\tau}}{\tau}\right )^{-1}H_{2}(\psi^{1-\alpha_2}(e^{-z\tau}),\psi^{1-\alpha_1}(e^{-z\tau}),A_{2,h},A_{1,h})\right \|_{L^2(\Omega)\rightarrow L^2(\Omega)}\leq C\tau
    \end{aligned}
    \end{equation*}
    and
    \begin{equation*}
    \begin{aligned}
    &\Bigg \|\left (\frac{1-e^{-z\tau}}{\tau}\right )^{-1}H_{1}(z^{-\alpha_2},z^{-\alpha_1},A_{2,h},A_{1,h})z^{-\alpha_2}-\left (\frac{1-e^{-z\tau}}{\tau}\right )^{-1}\\
    & \cdot H_{1}(\psi^{1-\alpha_2}(e^{-z\tau}),\psi^{1-\alpha_1}(e^{-z\tau}),A_{2,h},A_{1,h})\psi^{1-\alpha_2}(e^{-z\tau})\Bigg \|_{L^2(\Omega)\rightarrow L^2(\Omega)}
    \\
    &
    \leq C\tau|z|^{-\alpha_2}.
    \end{aligned}
    \end{equation*}
    Thus
    \begin{equation*}
    \|\uppercase\expandafter{\romannumeral3}_2\|_{L^2(\Omega)}\leq C\tau\int_{\Gamma^{\tau}_{\theta,\kappa}}e^{-C|z|t_{n-1}}|dz|\|G_{1,h}(0)\|_{L^2(\Omega)}\leq C t_n^{-1}\tau\|G_{1,h}(0)\|_{L^2(\Omega)}
    \end{equation*}
    and
    \begin{equation*}
        \begin{aligned}
    \|\uppercase\expandafter{\romannumeral4}_2\|_{L^2(\Omega)}&\leq C\tau\int_{\Gamma^{\tau}_{\theta,\kappa}}e^{-C|z|t_{n-1}}|z|^{-\alpha_2}|dz|\|G_{2,h}(0)\|_{L^2(\Omega)}
    \\
    & \leq C t_n^{\alpha_2-1}\tau\|G_{2,h}(0)\|_{L^2(\Omega)}.
        \end{aligned}
    \end{equation*}
    By simple calculations, we have
           \begin{equation*}
    \|\uppercase\expandafter{\romannumeral3}_3\|_{L^2(\Omega)}\leq C\tau\int_{\Gamma^{\tau}_{\theta,\kappa}}e^{-C|z|t_{n-1}} |dz|\|G_{1,h}(0)\|_{L^2(\Omega)}\leq Ct_n^{-1}\tau\|G_{1,h}(0)\|_{L^2(\Omega)}
        \end{equation*}
       \begin{equation*}
    \begin{aligned}
      \|\uppercase\expandafter{\romannumeral4}_3\|_{L^2(\Omega)} &\leq C\tau\int_{\Gamma^{\tau}_{\theta,\kappa}}e^{-C|z|t_{n-1}} |z|^{-\alpha_2}|dz|\|G_{2,h}(0)\|_{L^2(\Omega)}
      \\
      &
      \leq Ct_n^{\alpha_2-1}\tau\|G_{2,h}(0)\|_{L^2(\Omega)}.
    \end{aligned}
    \end{equation*}
    In summary,
    \begin{equation*}
    \|G_{1,h}(t_n)-G^n_{1,h}\|_{L^2(\Omega)}\leq C\tau\left(t_n^{-1}\|G_{1,h}(0)\|_{L^2(\Omega)}+t_n^{\alpha_2-1}\|G_{2,h}(0)\|_{L^2(\Omega)}\right).
    \end{equation*}
    Analogously, it has
    \begin{equation*}
    \|G_{2,h}(t_n)-G^n_{2,h}\|_{L^2(\Omega)}\leq C\tau\left(t_n^{\alpha_1-1}\|G_{1,h}(0)\|_{L^2(\Omega)}+t_n^{-1}\|G_{2,h}(0)\|_{L^2(\Omega)}\right).
    \end{equation*}
    The proof has been completed.
\end{proof}

\section{Numerical experiments} \label{Sec4}
In this section, we perform the numerical experiments to verify the effectiveness of the designed schemes. Since the exact solutions $G_1$ and $G_2$ are unknown, to get the spatial convergence rates, we calculate
\begin{equation*}
	\begin{aligned}
		E_{1,h}=\|G^{n}_{1,h}-G^{n}_{1,h/2}\|_{L^2(\Omega)},\quad
		E_{2,h}=\|G^{n}_{2,h}-G^{n}_{2,h/2}\|_{L^2(\Omega)},
	\end{aligned}
\end{equation*}
where $G^n_{1,h}$ and $G^n_{2,h}$ mean the numerical solutions of $G_{1}$ and $G_{2}$ at time $t_n$ with mesh size $h$; similarly, to obtain the temporal convergence rates, we calcuate
\begin{equation*}
	\begin{aligned}
		E_{1,\tau}=\|G_{1,\tau}-G_{1,\tau/2}\|_{L^2(\Omega)},\quad
		E_{2,\tau}=\|G_{2,\tau}-G_{2,\tau/2}\|_{L^2(\Omega)},
	\end{aligned}
\end{equation*}
where $G_{1,\tau}$ and $G_{2,\tau}$ are the numerical solutions of $G_{1}$ and $G_{2}$ at the fixed time $t$ with step size $\tau$. Then the spatial and temporal convergence rates can be, respectively, obtained by
\begin{equation*}
	{\rm Rate}=\frac{\ln(E_{i,h}/E_{i,h/2})}{\ln(2)},\quad {\rm Rate}=\frac{\ln(E_{i,\tau}/E_{i,\tau/2})}{\ln(2)},\quad\quad  i=1,2.
\end{equation*}
The following two groups of initial values are used:
\begin{enumerate}[(a)]
\item
\begin{equation*}\label{iniH05}
	G_{1}(x,0)=\chi_{(1/2,1)},\qquad G_{2}(x,0)=\chi_{(0,1/2)};
	\end{equation*}
    \item
    \begin{equation*}\label{initial_c}
		G_{1}(x,0)=(1-x)^{-\nu_1},\qquad G_{2}(x,0)=x^{-\nu_2},
	\end{equation*}
\end{enumerate}
where $\chi_{(a,b)}$ denotes the characteristic function on $(a,b)$.

Here we first give some examples to show the influence of the regularity of initial data on convergence rates.
\begin{example}

We take $a=2$, $\tau=1/800$, and $T=1$ to solve the system \eqref{equrqtosol} with the initial condition \eqref{iniH05}, and $s_1=s_2<1/2$, $\alpha_1=0.4$, $\alpha_2=0.7$. Here $G_{1,0}, G_{2,0}\in \hat{H}^{1/2-\epsilon}(\Omega)$ satisfy the conditions of Theorem \ref{thmnonsmoothdatasemi}. 
Table \ref{tab:samesdisconls} shows that the convergence rates can be achieved as $O(h^{s+1/2-\epsilon})$,  which agree with Theorems \ref{thmsmoothdatasemi1} and \ref{thmsmoothdatasemi2}.

\begin{table}
\caption{$L_2$ errors and convergence rates with $s_1=s_2=s<1/2$ and the initial condition \eqref{iniH05}}
\label{tab:samesdisconls}
\begin{tabular}{c|c|ccccc}
\hline
      $s$ &     $1/h$ &         50 &        100 &        200 &        400 &        800 \\
\hline
           & $E_{1,h}$ &  1.293E-02 &  8.631E-03 &  5.752E-03 &  3.829E-03 &  2.546E-03 \\

       0.1 &            &       Rate &    0.5834  &    0.5854  &    0.5872  &    0.5888  \\

           & $E_{2,h}$ &  9.139E-03 &  6.038E-03 &  3.986E-03 &  2.630E-03 &  1.734E-03 \\

           &            &       Rate &    0.5981  &    0.5991  &    0.5999  &    0.6005  \\
\hline
           & $E_{1,h}$ &  5.861E-03 &  3.486E-03 &  2.071E-03 &  1.229E-03 &  7.298E-04 \\

      0.25 &            &       Rate &    0.7496  &    0.7514  &    0.7522  &    0.7523  \\

           & $E_{2,h}$ &  3.795E-03 &  2.247E-03 &  1.331E-03 &  7.890E-04 &  4.680E-04 \\

           &            &       Rate &    0.7562  &    0.7553  &    0.7544  &    0.7535  \\
\hline
           & $E_{1,h}$ &  2.334E-03 &  1.247E-03 &  6.681E-04 &  3.585E-04 &  1.925E-04 \\

       0.4 &            &       Rate &    0.9044  &    0.9006  &    0.8982  &    0.8971  \\

           & $E_{2,h}$ &  1.468E-03 &  7.863E-04 &  4.218E-04 &  2.264E-04 &  1.216E-04 \\

           &            &       Rate &    0.9010  &    0.8985  &    0.8974  &    0.8975  \\
\hline
\end{tabular}
\end{table}

\end{example}

\begin{example}\label{Ex2}
We take $\alpha_1=0.4$, $\alpha_2=0.6$, $a=-2$, $\tau=1/800$, and $T=1$ to solve the system \eqref{equrqtosol} with the initial condition \eqref{iniH05}. Table \ref{tab:diffs} shows the $L_2$ errors and convergence rates for different values of $s_1,\,s_2$. The convergence rates are consistent with the results of Theorem \ref{thmnonsmoothdatasemi} when $s_1,s_2>1/2$; when $s_1,s_2<1/2$, the convergence rates of $G_2$ are higher than the predicted ones in Theorem \ref{thmsmoothdatasemi1} (or Theorem \ref{thmsmoothdatasemi2}) and the convergence rates of $G_1$ are the same as the predicted ones, the reason of which may be the less effect of  $aH(z,A_1,\alpha_1,\alpha_1-\alpha_2)P_h\tilde{G}_2-aH(z,A_{1,h},\alpha_1,\alpha_1-\alpha_2)\tilde{G}_{2,h}$ and $aH(z,A_2,\alpha_2,\alpha_2-\alpha_1)P_h\tilde{G}_1-aH(z,A_{2,h},\alpha_2,\alpha_2-\alpha_1)\tilde{G}_{1,h}$  in \eqref{equerrorrep} on convergence rates.
\begin{table}
\caption{$L_2$ errors and convergence rates with different $s_1,s_2$ and the initial condition \eqref{iniH05}}
\label{tab:diffs}
\begin{tabular}{c|c|ccccc}
\hline
$(s_1,s_2)$ &     $1/h$ &         50 &        100 &        200 &        400 &        800 \\
\hline
           & $E_{1,h}$ &  1.173E-02 &  7.797E-03 &  5.181E-03 &  3.441E-03 &  2.284E-03 \\

   (0.1,0.2) &            &          Rate &    0.5894  &    0.5899  &    0.5905  &    0.5913  \\

           & $E_{2,h}$ &  6.456E-03 &  3.988E-03 &  2.460E-03 &  1.516E-03 &  9.340E-04 \\

           &            &          Rate &    0.6949  &    0.6969  &    0.6982  &    0.6992  \\
\hline
           & $E_{1,h}$ &  4.105E-03 &  2.349E-03 &  1.345E-03 &  7.707E-04 &  4.420E-04 \\

   (0.3,0.4) &            &          Rate &    0.8051  &    0.8045  &    0.8034  &    0.8023  \\

           & $E_{2,h}$ &  1.853E-03 &  9.921E-04 &  5.320E-04 &  2.856E-04 &  1.534E-04 \\

           &            &          Rate &    0.9017  &    0.8989  &    0.8973  &    0.8968  \\
\hline
           & $E_{1,h}$ &  5.780E-04 &  2.754E-04 &  1.326E-04 &  6.425E-05 &  3.109E-05 \\

   (0.6,0.7) &            &          Rate &    1.0695  &    1.0547  &    1.0453  &    1.0472  \\

           & $E_{2,h}$ &  2.347E-04 &  1.092E-04 &  5.172E-05 &  2.479E-05 &  1.193E-05 \\

           &            &          Rate &    1.1032  &    1.0785  &    1.0613  &    1.0546  \\
\hline
           & $E_{1,h}$ &  1.143E-04 &  4.905E-05 &  2.194E-05 &  1.013E-05 &  4.798E-06 \\

   (0.8,0.9) &            &          Rate &    1.2207  &    1.1607  &    1.1149  &    1.0780  \\

           & $E_{2,h}$ &  3.297E-05 &  1.330E-05 &  5.799E-06 &  2.668E-06 &  1.269E-06 \\

           &            &          Rate &    1.3098  &    1.1974  &    1.1202  &    1.0714  \\
\hline
\end{tabular}

\end{table}
\end{example}

\begin{example}
The parameters are taken as $\alpha_1=0.8$, $\alpha_2=0.9$, $a=2$, $\tau=1/800$, and $T=1$. First, we solve the system \eqref{equrqtosol} with the initial condition \eqref{initial_c}. Letting $\nu_1=\nu_2=0.4999$ leads to $G_{1,0},G_{2,0}\in L^2(\Omega)$. According to Table \ref{tab:0505}, the convergence rates agree with Theorem \ref{thmnonsmoothdatasemi} when $s_1,\,s_2>1/2$; when $s_1,\,s_2<1/2$, the convergence rates of $G_2$ are higher than the predicted ones in Theorem \ref{thmnonsmoothdatasemi} and the convergence rates of $G_1$ are the same as the predicted ones, the reason of which is the same as that stated in Example \ref{Ex2}.


\begin{table}
\caption{$L_2$ errors and convergence rates with different $s_1,s_2$ and the initial condition \eqref{initial_c}} ($\nu_1=\nu_2=0.4999$)
\label{tab:0505}

    \begin{tabular}{c|c|ccccc}
    \hline
    $(s_1,s_2)$ & $1/h$ & 50    & 100   & 200   & 400   & 800 \\
    \hline
          & $E_{1,h}$ & 5.682E-02 & 4.505E-02 & 3.629E-02 & 2.967E-02 & 2.459E-02 \\
    (0.1,0.2) &       & Rate     & 0.3348  & 0.3121  & 0.2904  & 0.2709  \\
          & $E_{2,h}$ & 4.932E-02 & 3.583E-02 & 2.609E-02 & 1.906E-02 & 1.398E-02 \\
          &       & Rate     & 0.4612  & 0.4578  & 0.4530  & 0.4475  \\
     \hline
          & $E_{1,h}$ & 9.879E-03 & 6.352E-03 & 4.101E-03 & 2.658E-03 & 1.729E-03 \\
    (0.3,0.4) &       & Rate    & 0.6371  & 0.6310  & 0.6255  & 0.6206  \\
          & $E_{2,h}$ & 8.644E-03 & 5.090E-03 & 2.990E-03 & 1.752E-03 & 1.025E-03 \\
          &       & Rate     & 0.7640  & 0.7677  & 0.7709  & 0.7737  \\
     \hline
          & $E_{1,h}$ & 9.208E-04 & 4.679E-04 & 2.370E-04 & 1.197E-04 & 5.972E-05 \\
    (0.6,0.7) &       & Rate    & 0.9766  & 0.9813  & 0.9852  & 1.0033  \\
          & $E_{2,h}$ & 8.295E-04 & 4.080E-04 & 2.017E-04 & 1.001E-04 & 4.950E-05 \\
          &       & Rate     & 1.0236  & 1.0163  & 1.0111  & 1.0158  \\
     \hline
          & $E_{1,h}$ & 1.290E-04 & 5.828E-05 & 2.703E-05 & 1.280E-05 & 6.170E-06 \\
    (0.8,0.9) &       & Rate     & 1.1462  & 1.1083  & 1.0784  & 1.0530  \\
          & $E_{2,h}$ & 9.645E-05 & 4.097E-05 & 1.837E-05 & 8.559E-06 & 4.089E-06 \\
          &       & Rate     & 1.2353  & 1.1569  & 1.1020  & 1.0657  \\
    \hline
    \end{tabular}%

\end{table}

Then we take $\nu_1=-0.4$ and $\nu_2=-0.3$, which may lead to $G_{1,0}\in \hat{H}^{0.1}(\Omega)$ and $G_{1,0}\in \hat{H}^{0.2}(\Omega)$. Table \ref{tab:0403} shows the convergence results and we find the convergence rates of $G_2$ are higher than the predicted ones in Theorem \ref{thmnonsmoothdatasemi} and the convergence rates of $G_1$ are the same as the predicted ones, and the reason for these phenomena is the same as the one in Example \ref{Ex2}.
\begin{table}
\caption{$L_2$ errors and convergence rates with different $s_1,s_2$ and the initial condition \eqref{initial_c}  ($\nu_1=-0.4$, $\nu_2=-0.3$)}
\label{tab:0403}

    \begin{tabular}{c|c|ccccc}
    \hline
    $(s_1,s_2)$ & $1/h$ & 50    & 100   & 200   & 400   & 800 \\
    \hline
          & $E_{1,h}$ & 3.524E-02 & 2.633E-02 & 1.993E-02 & 1.528E-02 & 1.184E-02 \\
    (0.1,0.2) &       & Rate  & 0.4207  & 0.4016  & 0.3837  & 0.3677  \\
          & $E_{2,h}$ & 2.688E-02 & 1.785E-02 & 1.182E-02 & 7.810E-03 & 5.151E-03 \\
          &       & Rate  & 0.5908  & 0.5947  & 0.5978  & 0.6005  \\
     \hline
          & $E_{1,h}$ & 6.941E-03 & 4.271E-03 & 2.631E-03 & 1.622E-03 & 1.001E-03 \\
    (0.3,0.4) &       & Rate  & 0.7006  & 0.6991  & 0.6977  & 0.6965  \\
          & $E_{2,h}$ & 5.211E-03 & 2.880E-03 & 1.586E-03 & 8.700E-04 & 4.759E-04 \\
          &       & Rate  & 0.8556  & 0.8609  & 0.8659  & 0.8705  \\
    \hline
    \end{tabular}%

\end{table}
\end{example}

\begin{example}
In this example, we take $\alpha_1=0.7$, $\alpha_2=0.6$, $a=0.1$, $\tau=1/50$, and $T=20$. The system \eqref{equrqtosol} is solved with the initial condition \eqref{initial_c} and we take $\nu_1=0$, $\nu_2=0.4999$, which implies $G_{1,0}\in \hat{H}^{1/2-\epsilon}(\Omega)$, $G_{2,0}\in L^2(\Omega)$. According to Table \ref{tab:0005}, the results for $s_1=0.25$ and $s_2=0.8$ agree with Theorem \ref{thmsmoothdatasemi3}; when $s_1,\,s_2<1/2$, the convergence rates of $G_1$ are higher than the predicted ones in Theorem \ref{thmsmoothdatasemi2} and the convergence rates of $G_2$ are the same as the predicted ones, the reason of which is the same as that stated in Example \ref{Ex2}.

 \begin{table}
\caption{$L_2$ errors and convergence rates with different $s_1,s_2$ and the initial condition \eqref{initial_c} ($\nu_1=0$, $\nu_2=0.4999$)}
\label{tab:0005}

    \begin{tabular}{c|c|ccccc}
    \hline

    $(s_1,s_2)$ & $1/h$ & 50    & 100   & 200   & 400   & 800 \\
    \hline
          & $E_{1,h}$ & 3.630E-04 & 1.980E-04 & 1.080E-04 & 5.894E-05 & 3.217E-05 \\
    (0.4,0.1) &       & Rate  & 0.8746  & 0.8742  & 0.8739  & 0.8737  \\
          & $E_{2,h}$ & 2.591E-02 & 2.269E-02 & 1.985E-02 & 1.736E-02 & 1.517E-02 \\
          &       & Rate  & 0.1916  & 0.1926  & 0.1936  & 0.1947  \\
    \hline
          & $E_{1,h}$ & 3.417E-04 & 1.849E-04 & 1.001E-04 & 5.411E-05 & 2.924E-05 \\
    (0.4,0.2) &       & Rate  & 0.8858  & 0.8862  & 0.8869  & 0.8882  \\
          & $E_{2,h}$ & 1.122E-02 & 8.629E-03 & 6.622E-03 & 5.072E-03 & 3.878E-03 \\
          &       & Rate  & 0.3784  & 0.3818  & 0.3848  & 0.3874  \\
     \hline
          & $E_{1,h}$ & 8.932E-05 & 4.412E-05 & 2.196E-05 & 1.100E-05 & 5.464E-06 \\
    (0.6,0.3) &       & Rate  & 1.0175  & 1.0065  & 0.9974  & 1.0095  \\
          & $E_{2,h}$ & 4.877E-03 & 3.321E-03 & 2.252E-03 & 1.521E-03 & 1.024E-03 \\
          &       & Rate  & 0.5544  & 0.5606  & 0.5660  & 0.5707  \\
    \hline
    \end{tabular}%

\end{table}
\end{example}
Finally, we verify the temporal convergence rates in the following example.
\begin{example}
Here we take $s_1=0.25$, $s_2=0.75$, $a=2$, and $h=1/400$ to solve the system \eqref{equrqtosol} with the initial condition \eqref{iniH05}. Table \ref{tab:time} shows the $L_2$ errors and convergence rates for different $\alpha_1,\,\alpha_2$, which can be used to validate the results of Theorem \ref{thmhomfullest}.
\begin{table}
\caption{$L_2$ errors and convergence rates with different $\alpha_1$, $\alpha_2$ and the initial condition \eqref{iniH05}}
\label{tab:time}
\begin{tabular}{c|c|ccccc}
\hline
$(\alpha_1,\alpha_2)$ &  $1/\tau$ &        100 &        200 &        400 &        800 &       1600 \\
\hline
           & $E_{1,\tau}$ &  3.980E-02 &  1.957E-02 &  9.745E-03 &  4.876E-03 &  2.444E-03 \\

   (0.3,0.6) &            &       Rate &    1.0241  &    1.0059  &    0.9988  &    0.9966  \\

           & $E_{2,\tau}$ &  1.038E-01 &  5.130E-02 &  2.565E-02 &  1.288E-02 &  6.478E-03 \\

           &            &       Rate &    1.0173  &    0.9999  &    0.9935  &    0.9920  \\
\hline
           & $E_{1,\tau}$ &  1.662E-02 &  8.178E-03 &  4.063E-03 &  2.027E-03 &  1.013E-03 \\

   (0.4,0.7) &            &       Rate &    1.0234  &    1.0092  &    1.0031  &    1.0007  \\

           & $E_{2,\tau}$ &  4.338E-02 &  2.145E-02 &  1.070E-02 &  5.358E-03 &  2.685E-03 \\

           &            &       Rate &    1.0159  &    1.0031  &    0.9982  &    0.9967  \\
\hline
           & $E_{1,\tau}$ &  8.279E-03 &  4.071E-03 &  2.019E-03 &  1.006E-03 &  5.020E-04 \\

   (0.25,0.8) &            &       Rate &    1.0242  &    1.0115  &    1.0055  &    1.0026  \\

           & $E_{2,\tau}$ &  2.198E-02 &  1.086E-02 &  5.410E-03 &  2.703E-03 &  1.352E-03 \\

           &            &       Rate &    1.0167  &    1.0059  &    1.0013  &    0.9996  \\
\hline
\end{tabular}

\end{table}
\end{example}
\section{Conclusion}\label{Sec5}
The power law distributions are widely observed in heterogeneous media, relating to the fields of physics, biology, and social science, etc. This paper focuses on the regularity and numerical methods of the two state model with fractional Laplacians, characterizing the power law properties. The priori estimates are obtained under various different regularity assumptions of initial values and/or different powers of fractional Laplacians. The designed numerical scheme is with finite element approximation for fractional Laplacians and $L_1$ scheme to discretize the time fractional Riemann-Liouville derivative. For the scheme, the complete error analyses are provided, and the extensive numerical experiments are performed to validate their  effectiveness.


\begin{acknowledgements}
We thank Buyang Li for the discussions.
\end{acknowledgements}



\end{document}